\documentclass[11pt,a4paper]{article}
\usepackage{amsmath,amssymb,amsthm,amscd,mathrsfs}
\usepackage{indentfirst}
\usepackage[top=25mm, bottom=30mm, left=30mm, right=30mm]{geometry}
\newtheorem{Def}{\bf Definition}[section]
\newtheorem{Thm}[Def]{\bf Theorem}
\newtheorem{Lem}[Def]{\bf Lemma}

\newtheorem{Pro}[Def]{\bf Proposition}
\newtheorem{Rem}[Def]{\bf Remark}

\newtheorem*{Que}{\bf Question}

\newtheorem*{claim1}{\bf Claim}

\newtheorem{case1}{\bf Case}
\newtheorem{ThmA}{\bf Theorem}
\newtheorem{ProA}[ThmA]{\bf Proposition}

\newtheorem{CorA}[ThmA]{\bf Corollary}

\newcommand{\R}{\mathbb{R}}
\newcommand{\C}{\mathbb{C}}
\newcommand{\Z}{\mathbb{Z}}
\newcommand{\F}{\mathbb{F}}

\newcommand{\N}{\mathbb{N}}
\newcommand{\B}{\mathbb{B}}
\newcommand{\K}{\mathbb{K}}
\newcommand{\M}{\mathbb{M}}
\newcommand{\Ad}{\operatorname{Ad}}
\newcommand{\id}{\text{\rm id}}
\newcommand{\Aut}{\operatorname{Aut}}
\newcommand{\Tr}{\mathord{\text{\rm Tr}}}
\newcommand{\ovt}{\mathbin{\overline{\otimes}}}

%

\title{\bf On fundamental groups of tensor product \\$\rm\bf II_1$ factors}
\author{Yusuke Isono\thanks{Research Institute for Mathematical Sciences, Kyoto University, 606-8502, Kyoto, Japan \protect \\ \hspace{1.47em} E-mail: \texttt{isono@kurims.kyoto-u.ac.jp}}}
\date{}
%
%
\begin{document}
\maketitle

\begin{abstract}
	Let $M$ be a $\rm II_1$ factor and let $\mathcal{F}(M)$ denote the fundamental group of $M$. In this article, we study the following property of $M$: for any $\rm II_1$ factor $B$, we have $\mathcal{F}(M\ovt  B)=\mathcal{F}(M)\mathcal{F}(B)$. We prove that for any subgroup $G\leq \mathbb{R}^*_+$ which is realized as a fundamental group of a $\rm II_1$ factor, there exists a $\rm II_1$ factor $M$ which satisfies this property and whose fundamental group is $G$. Using this, we deduce that if $G,H \leq \R^*_+$ are realized as fundamental groups of $\rm II_1$ factors, then so are groups $G\cdot H$ and $G\cap H$.
\end{abstract}

\section{\bf Introduction and main theorems}

	In their pioneering work, Murray and von Neumann introduced the fundamental group as an invariant of $\rm II_1$ factors \cite{MV43}. For a $\rm II_1$ factor $M$ with trace $\tau$, the \textit{fundamental group} is defined as
\begin{equation*}
	\mathcal{F}(M):= \left\{ \frac{\tau(p)}{\tau(q)}\in \mathbb{R}^*_+ \, \middle| \, p,q \text{ are projections in $M$ with } pMp\simeq qMq \right\}.
\end{equation*}
Murray and von Neumann proved the hyperfinite (or \textit{amenable}) $\rm II_1$ factor has the full fundamental group $\mathbb{R}^*_+$, and then asked the general behavior of this invariant. 
Indeed, the fundamental group is the most well known invariant for $\rm II_1$ factors, and to determine which subgroup of $\R^*_+$ appears as a fundamental group is a long-standing open problem in the von Neumann algebra theory. 

	Computation of fundamental groups, however, is a hard problem. Indeed, $\rm II_1$ factors $pMp$ and $qMq$ share a lot of properties in common, so it is very difficult to distinguish them. Thus very few computations have been done until recently. 
Connes proved that $L\Gamma$, where $\Gamma$ is an ICC property (T) group, has a countable fundamental group \cite{Co80}, which is the first example of a $\rm II_1$ factor with fundamental group not equal to $\R^*_+$. Voiculescu and R$\rm \breve{a}$dulescu proved $\mathcal{F}(L\F_\infty)$ has the full fundamental group $\R^*_+$ \cite{Vo89,Ra91}.

	In 2001, Popa introduced a new framework to study this problem \cite{Po01}. He developed a way of identifying Cartan subalgebras and then reduced this computation problem for a certain class of $\rm II_1$ factors to the one for corresponding orbit equivalence relations. 
Thus combined with Gaboriau's work on orbit equivalence relations \cite{Ga99,Ga01}, Popa obtained the first example of a $\rm II_1$ factor which has the trivial fundamental group. 

	Much progress has been made by this new technology in the von Neumann algebra theory. The study in this new framework is now called the \textit{deformation/rigidity theory}. 
Thus, a lot of computations of fundamental groups have been done in the last decade. 

	We say that a subgroup $G \leq \mathbb{R}_+^*$ is in the \textit{class} $\mathcal{S}_{\rm factor}$ if there is a $\rm II_1$ factor $M$ with separable predual such that $\mathcal{F}(M)=G$. 
Popa proved that any countable subgroup of $\R_+^*$ is contained in $\mathcal{S}_{\rm factor}$ \cite{Po03}. Popa and Vaes proved that $\mathcal{S}_{\rm factor}$ contains many uncountable subgroups in $\R_+^*$ \cite{PV08a}. See \cite{Po04,IPP05,Po06a,Ho07,PV08b,De10} for other calculations of fundamental groups. We note that, while this new theory provides a lot of concrete examples, very few general properties for the class $\mathcal{S}_{\rm factor}$ are known so far. (See Proposition \ref{general} below.)

	The aim of this article is to study fundamental groups of tensor product $\rm II_1$ factors. For this, recall that for a $\rm II_1$ factor $M$ and $t>0$, the \textit{amplification} $M^t$ is defined (up to $*$-isomorphism) as $pMp \ovt \mathbb{M}_n$ for any $n\in\mathbb{N}$ with $t\leq n$ and any projection $p\in M$ with trace $t/n$. 
It is then easy to verify that 
\begin{itemize}
	\item $\mathcal{F}(M)=\{t\in \mathbb{R}^*_+ \mid M\simeq M^t \}$;
	\item $(M_1\ovt  M_2)^{st} \simeq M_1^s\ovt  M_2^{t}$ for $\rm II_1$ factors $M_i$ and $s,t>0$.
\end{itemize}
They particularly imply the following inclusion:
	$$\mathcal{F}(M_1\ovt  M_2)\supset\mathcal{F}(M_1)\mathcal{F}(M_2)$$ 
for any $\rm II_1$ factors $M_1$ and $M_2$. 
It is likely that the converse inclusion also holds true at the first glance. However, as we emphasized, the computation of fundamental groups is a hard problem, and so we know very little about this converse inclusion until recently. 

	To study this inclusion is actually the main purpose of this article. 
This is a natural question, since it provides a quite useful formula for fundamental groups of tensor product II$_1$ factors. Indeed, if the converse inclusion holds, then one actually has an equation, so the computation of fundamental groups for the tensor product can be reduced to the one for each tensor component. 
Here we state this problem in the following precise form. 
\begin{Que}
	For which $\rm II_1$ factors $M_1$ and $M_2$, do we obtain the equation		$$\mathcal{F}(M_1\ovt  M_2) = \mathcal{F}(M_1)\mathcal{F}(M_2)?$$
\end{Que}
\noindent
We do not believe that all II$_1$ factors satisfy it, although, to the best of our knowledge, any example of II$_1$ factors which do \textit{not} satisfy this equation is not known.


	In the deformation/rigidity theory, Ozawa and Popa provided the first class of $\rm II_1$ factors that satisfy this equality. They proved that if each $M_i$ is a free group factor, then the tensor product satisfies a \textit{unique prime factorization} result and particularly the equation above holds true \cite{OP03}. See \cite{Pe06,Sa09,CSU11,SW11,Is14,CKP14,HI15,Ho15} for other classes of factors which satisfy the unique prime factorization result.

	In this article, we further develop Ozawa--Popa's strategy. We particularly study the following property for a II$_1$ factor $M$:
	$$\mathcal{F}(M\ovt  B)=\mathcal{F}(M)\mathcal{F}(B) \text{ for \textit{any} $\rm II_1$ factor } B.$$
Obviously this condition is stronger than the one Ozawa-Popa obtained for free groups factors, since the factor $B$ in the condition can be arbitrary. 
If this condition holds, we say that $M$ satisfies the \textit{tensor factorization property for fundamental groups} (say, \textit{property (TFF)} in short). 


	Our first theorem provides examples of $\rm II_1$ factors having property (TFF). See \cite[Definitions 12.3.1 and 15.1.2]{BO08} for definitions of weak amenability and bi-exactness (and note that free groups, more generally hyperbolic groups, satisfy them).

\begin{ThmA}\label{thmA}
	Let $M$ be one of the following $\rm II_1$ factors.
	\begin{itemize}
		\item A group $\rm II_1$ factor $L\Gamma$, where $\Gamma$ is an ICC, non-amenable, weakly amenable, and bi-exact group.
		\item A free product $\rm II_1$ factor $M_1*M_2$, where $M_1$ and $M_2$ are diffuse (and tracial). 
		\item A group $\rm II_1$ factor $L(\Delta\wr\Lambda)$, where $\Delta$ is a non-trivial amenable group and $\Lambda$ is a non-amenable group. 
	\end{itemize}
	Then $M$ satisfies the property (TFF).
\end{ThmA}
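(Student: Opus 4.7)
The inclusion $\mathcal{F}(M\ovt B)\supset \mathcal{F}(M)\mathcal{F}(B)$ is automatic from the amplification formulas recalled above, so what must be proved is the reverse inclusion. Given $t\in\mathcal{F}(M\ovt B)$, fix an isomorphism
$$\theta : M\ovt B \xrightarrow{\ \sim\ } (M\ovt B)^t \simeq M\ovt B^t.$$
The plan is to show that, after a unitary conjugation inside $M\ovt B^t$, $\theta$ splits as a tensor product of two amplification isomorphisms: there exist $s>0$, a unitary $u\in M\ovt B^t$, and a tensor decomposition $M\ovt B^t = M^s\ovt B^{t/s}$ such that $u\,\theta(M\ovt 1)\,u^* = M^s\ovt 1$ and $u\,\theta(1\ovt B)\,u^* = 1\ovt B^{t/s}$. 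Once this is achieved, the restricted isomorphisms give $s\in\mathcal{F}(M)$ and $t/s\in\mathcal{F}(B)$, and hence $t=s\cdot(t/s)\in\mathcal{F}(M)\mathcal{F}(B)$.

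\textbf{Main step.} The heart of the proof is a \emph{position rigidity} for $M$: the image $\theta(M\ovt 1)$ must be conjugable onto an $M$-tensor-factor of $M\ovt B^t$. Once this is established, the relative commutant identity $M' \cap (M\ovt B) = 1\ovt B$ (valid since $M$ is a factor) automatically forces $u\,\theta(1\ovt B)\,u^*$ to be the complementary tensor factor, and the splitting of $\theta$ follows. Unlike Ozawa--Popa's unique prime factorization, here one imposes \emph{no} rigidity on $B$; the entire burden rests on a structural property of $M$ strong enough to rule out any leakage of $\theta(M\ovt 1)$ into the $B^t$-direction. The general tool is Popa's intertwining-by-bimodules, to be fed with case-specific structural data.

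\textbf{Case analysis and main obstacle.} The three cases call for different structural inputs. For a bi-exact, weakly amenable group factor $M=L\Gamma$, the Ozawa--Popa relative strong solidity and unique prime factorization machinery applies with only minor modification, since the hypothesis is weakened on $B$ but not on $M$. For a diffuse free product $M=M_1*M_2$, the malleable deformation of free products in the spirit of Ioana--Peterson--Popa (and Peterson) provides the needed intertwining. The subtle case is the wreath product $M=L(\Delta\wr\Lambda)$, which is typically neither solid nor bi-exact; here one must first intertwine the amenable base $L\Delta^{\Lambda}$ using Bernoulli-type rigidity \`a la Popa--Vaes and Ioana--Popa--Vaes, and then extend control to the full factor via spectral-gap/weak compactness properties coming from non-amenability of $\Lambda$. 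This two-stage intertwining in the wreath product case is where I expect the main technical obstacle to lie, and is the step most likely to require new arguments beyond a direct citation of existing results.
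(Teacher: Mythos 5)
Your ``Main step'' --- that $\theta(M\ovt 1)$ can always be unitarily conjugated onto an $M$-tensor factor, i.e.\ that any leakage of $\theta(M\ovt 1)$ into the $B^t$-direction can be ruled out --- is not just the hard part: it is false, precisely because no hypothesis at all is placed on $B$. Take $B=M\ovt B_0$ for an arbitrary $\rm II_1$ factor $B_0$, let $t=1$, and let $\theta$ be the flip of the two copies of $M$ in $M\ovt M\ovt B_0$. Then $\theta(M\ovt 1)=1\otimes M\otimes 1\subset 1\ovt B$, and a sequence of unitaries of $M$ tending weakly to $0$ shows via Theorem \ref{Popa embed}(ii) that $\theta(M\ovt 1)\not\preceq_{M\ovt B}M\otimes 1$, hence $\theta(M\ovt 1)$ cannot be conjugated onto $M^s\otimes 1$ for any identification $M\ovt B=M^s\ovt B^{1/s}$ (Lemma \ref{intertwining tensor} and the remark following it). Reading ``$M$-tensor factor'' loosely (any tensor factor isomorphic to some $M^s$) does not rescue the step: $\theta(M\ovt 1)$ is trivially such a factor with complement $\theta(1\ovt B)\simeq B$, and the whole issue is to show the complement is an amplification $B^{t/s}$ --- that is the statement to be proved, not a consequence of the relative commutant identity. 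So any correct argument must handle symmetrically the case where the new copy of $M$ (or the old copy of $B$) lands in the $B$-direction. This is exactly what the paper's key Lemma \ref{tensor formula} supplies: writing $M\ovt B=K\ovt L$ with $K\simeq M$, $L\simeq B^t$, it suffices to prove one of the four conditions $K\preceq B$, $L\preceq B$, $M\preceq L$, $B\preceq L$, and in the ``leakage'' cases (say $K\preceq_{M\ovt B}B$) one still concludes, since Lemma \ref{commutant form} gives $B^s= M\ovt Q$ and $B^t= Q\ovt M^{1/s}$, whence $B^s\simeq B^{ts}$ and $t\in\mathcal{F}(B)$. Note that even strong primeness (Theorem \ref{thmC}), the strongest positioning result the paper establishes, only places $M$ inside one of the two factors of a decomposition and never excludes the $B$-direction; your proposal contains no argument for that half of the dichotomy, and this is a genuine gap.

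On the case-specific inputs: for the first two factors the paper does proceed roughly as you anticipate, proving strong primeness (Theorem \ref{thmC}, via Popa--Vaes for weakly amenable bi-exact groups and via the Ioana/Vaes normalizer theorem applied to $N=(B\ovt M_1)*_B(B\ovt M_2)$) and then invoking Proposition \ref{s-prime to TF}. But for $L(\Delta\wr\Lambda)$ only non-amenability of $\Lambda$ is assumed, and no Bernoulli-type malleability or spectral gap rigidity is used: the paper verifies the hypothesis of Lemma \ref{tensor formula} directly by applying Sizemore--Winchester (Theorem \ref{SW11}) to the regular subfactors $K$ and $L$ to get $K\lessdot_{M\ovt B}B$ and $L\lessdot_{M\ovt B}B$, then exchanges the roles of the two decompositions to get $M\lessdot_{M\ovt B}L$, and concludes by transitivity of relative amenability (Proposition \ref{relative amenable transitivity}) that $M\lessdot_{M\ovt B}B$, i.e.\ $M$ is amenable, a contradiction. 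The relative-amenability formulation of these structural theorems is what allows one to say anything while assuming nothing about $B$; the two-stage intertwining you sketch would require rigidity hypotheses that are simply not available in this setting.
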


As a corollary of this theorem, we obtain the main observation of this article. In fact, the following corollary states \textit{general} properties for the class $\mathcal{S}_{\rm factor}$. Although it is not enough to answer the aforementioned question by Murray and von Neumann, this is an interesting consequence since there are very few general properties for the class $\mathcal{S}_{\rm factor}$ as we mentioned.

\begin{CorA}\label{corB}
	For any $G \in \mathcal{S}_{\rm factor}$, there is a $\rm II_1$ factor $M$ with separable predual and with the property (TFF) such that $\mathcal{F}(M)=G$. 

The class $\mathcal{S}_{\rm factor}$ admits the following properties.
	\begin{itemize}
		\item Stability under multiplication: for any $G,H\in \mathcal{S}_{\rm factor}$, the group $G\cdot H$ is  in $\mathcal{S}_{\rm factor}$.
		\item Stability under countable intersection: for any $G_n\in \mathcal{S}_{\rm factor}$ (possibly $G_n=G_m$ for $n\neq m$), $n\in \N$, the group $\bigcap_n G_n$ is in $\mathcal{S}_{\rm factor}$.
	\end{itemize}
\end{CorA}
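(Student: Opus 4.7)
The plan is to deduce each part from Theorem~\ref{thmA}; part (a) is the main content, and (b), (c) follow as formal consequences. For part (a), given $G\in\mathcal{S}_{\rm factor}$ realized as $\mathcal{F}(N_0)$ for some $\rm II_1$ factor $N_0$ with separable predual: if $G=\R^*_+$, take $M$ to be the hyperfinite $\rm II_1$ factor, which trivially satisfies (TFF). Otherwise $N_0$ is non-amenable, and I would form the tracial free product $M:=N_0 * L\F_\infty$, a free product of two diffuse $\rm II_1$ factors, which has (TFF) by the second bullet of Theorem~\ref{thmA}. To compute $\mathcal{F}(M)=G$: the inclusion $G\subset\mathcal{F}(M)$ follows from a Dykema-type amplification formula for free products together with $\mathcal{F}(L\F_\infty)=\R^*_+$; the reverse uses a Kurosh-type unique free decomposition theorem for $\rm II_1$ factors (Ozawa, Peterson, Chifan--Houdayer) applied to any isomorphism $N_0 * L\F_\infty\simeq N_0^t * L\F_\infty$, forcing $N_0\simeq N_0^t$ (the alternative matching $N_0\simeq L\F_\infty$ is excluded by $G\neq\R^*_+$) and hence $t\in G$.

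For part (b), given $G,H\in\mathcal{S}_{\rm factor}$, apply (a) to obtain (TFF) factors $M_G, M_H$ with $\mathcal{F}(M_G)=G$ and $\mathcal{F}(M_H)=H$; then $M:=M_G\ovt M_H$ has separable predual and, applying the (TFF) of $M_G$ with $B=M_H$, satisfies $\mathcal{F}(M)=G\cdot H$, so $G\cdot H\in\mathcal{S}_{\rm factor}$. For part (c), apply (a) to get (TFF) factors $M_n$ with $\mathcal{F}(M_n)=G_n$, and form the infinite tracial free product $M:=*_{n\geq 1} M_n$. Writing $M = M_1 * (*_{n\geq 2} M_n)$ expresses $M$ as a free product of two diffuse factors, so Theorem~\ref{thmA} gives (TFF) of $M$. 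The same Dykema + Kurosh machinery from (a) then yields $\mathcal{F}(M)=\bigcap_n G_n$: one inclusion is by componentwise amplification (if $t\in\bigcap_n G_n$ then $M_n^t\simeq M_n$ for all $n$, hence $M^t\simeq M$), the other by the infinite-factor Kurosh argument which decomposes an isomorphism $M\simeq M^t$ into isomorphisms $M_n\simeq M_n^t$, forcing $t\in G_n$ for every $n$.

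The main technical hurdle is the fundamental group computation in parts (a) and (c), which rests on two non-trivial ingredients from the deformation/rigidity theory of free products of $\rm II_1$ factors: a Dykema-style amplification formula $(M_1 * M_2)^t\simeq M_1^t * M_2^t$, and a Kurosh-type unique free decomposition theorem. Both require non-amenability (or solidity) hypotheses that are satisfied in our setting, but extending the Kurosh argument to the countably infinite free product needed in (c), while controlling how amplification is distributed across the free factors, is the most delicate point.
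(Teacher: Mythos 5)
Your reduction of (b) to (a), the hyperfinite case $G=\R^*_+$ of (a), and the inclusion $G\subset\mathcal{F}(N_0 * L\F_\infty)$ via Dykema--R\u{a}dulescu dust absorption are all fine. The gap is in the reverse inclusion, which is the heart of the matter: you invoke ``a Kurosh-type unique free decomposition theorem'' for an isomorphism $N_0 * L\F_\infty \simeq N_0^t * L\F_\infty$ with $N_0$ an \emph{arbitrary} non-amenable $\rm II_1$ factor. No such theorem exists. All known Kurosh-type results (\cite{Oz04}, \cite{IPP05}, Peterson, \cite{HU15}) require every free component to lie in a restricted class (non-prime, property Gamma, possessing a Cartan subalgebra, etc.), and an arbitrary $N_0$ --- which could itself be a free product, or a prime full factor with no available rigidity --- need not belong to any of them; moreover $L\F_\infty$ is exactly the kind of component such theorems exclude (prime, full, Cartan-free, and it absorbs free complements), so even the dichotomy ``$N_0\simeq N_0^t$ or $N_0\simeq L\F_\infty$'' has no justification. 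The same problem recurs, aggravated, in your part (c): your components $M_n=N_0^{(n)} * L\F_\infty$ are themselves free products (generically prime, full and without Cartan subalgebras), hence outside the Houdayer--Ueda class, the grouping $M=\ast_n M_n$ is not canonical since $\ast_n M_n=(\ast_n N_0^{(n)}) * L\F_\infty$, and some $M_n$ may even be amenable (when $G_n=\R^*_+$).

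This obstruction is precisely what the paper's construction is designed to bypass. The paper fixes $N:=L\F_2 * L(\Z\rtimes\mathrm{SL}(2,\Z))$, which has trivial fundamental group by \cite{IPP05} and has property (TFF) by Theorem \ref{thmA}, so $\mathcal{F}(N\ovt B_n)=\mathcal{F}(B_n)$, and then takes the \emph{infinite} free product $M=\ast_{n\in\N}(N\ovt B_n)$. Each component $N\ovt B_n$ is a tensor product of $\rm II_1$ factors with a non-amenable tensor factor, hence non-prime and inside the class covered by the free product rigidity theorem of \cite{HU15}; that theorem gives the componentwise matching $M_n\simeq M_{\alpha(n)}^t$, and combined with the compression formula $M^t=\ast_n M_n^t$ for $0<t\leq 1$ from \cite{DR99} (valid for infinite free products, with no free-group dust) one gets $\mathcal{F}(M)=\bigcap_n G_n$; statement (a) then follows by taking all $B_n=B$, and (b) as you say. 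To repair your argument you would either have to restrict $N_0$ to a rigid class (defeating the purpose, since $G$ is realized by an arbitrary factor) or insert the paper's tensoring trick before taking the free product.
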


	We note that the proof of the first statement in this corollary in fact shows the following: if we put 
$N:=L\F_n * L(\Z^2\rtimes\mathrm{SL}(2,\Z))$, then for \textit{any} $\rm II_1$ factor $B$ we have 
	$$\mathcal{F}(B) = \mathcal{F}(*_{\N} (B\ovt N)).$$
Thus combined with Theorem \ref{thmA}, the free product $\rm II_1$ factor $*_{\N} (B\ovt N)$ does the work. 

	The proof of Theorem \ref{thmA} uses the idea in our previous paper \cite{Is14}, in which we introduced another notion of primeness for II$_1$ factors. Recall that a $\rm II_1$ factor $M$ is said to be \textit{prime} if it does not have a tensor decomposition as $\rm II_1$ factors, namely, if it has a decomposition $M=M_1\ovt  M_2$, then at least one $M_i$ must be of type I. 
Obviously this definition comes from the notion of prime numbers in the number theory. 

	Actually there are two equivalent notions of prime numbers. 
Recall that a number $p\in\mathbb{N}$ is \textit{irreducible} if for any $q,r\in\mathbb{N}$ with $p=qr$, we have $q=1$ or $r=1$; and is \textit{prime} if for any $q,r,s\in\mathbb{N}$ with $pq=rs$, we have $p\mid r$ or $p\mid s$. 
In the von Neumann algebra theory, we adapt the first one (i.e.\ irreducibility) as a definition of primeness. In \cite[Section 5]{Is14}, we introduced a different notion of primeness, which corresponds to the second one as follows. To distinguish two primeness, we name it \textit{strongly prime}. 
\begin{itemize}
	\item We say a $\rm II_1$ factor $M$ is \textit{strongly prime} if for any $\rm II_1$ factors $B,K$ and $L$ with $M\ovt  B=K\ovt  L$, there is a unitary $u\in\mathcal{U}(M\ovt  B)$ and $t>0$ such that, under the identification $K\ovt L = K^t \ovt L^{1/t}$,
we have $uM u^*\subset K^t$ or $uMu^*\subset L^{1/t}$.
\end{itemize}
Here we identify each tensor component as a subalgbera (e.g.\ $M = M \otimes \C \subset M\ovt B$).

	Our second main theorem treats examples of strongly prime factors. Note that the first item in this theorem was already obtained in our previous article \cite[Theorem 5.1]{Is14}. We also note that the first and the second item in the theorem treat exactly the same ones as in Theorem \ref{thmA}.

\begin{ThmA}\label{thmC}
	Let $M$ be one of the following $\rm II_1$ factors.
	\begin{itemize}
		\item A group $\rm II_1$ factor $L\Gamma$, where $\Gamma$ is a non-amenable, ICC, weakly amenable, and bi-exact group.
		\item A free product $\rm II_1$ factor $M_1*M_2$, where $M_1$ and $M_2$ are diffuse. 
		\item A group $\rm II_1$ factor $L(\Delta\wr\Lambda)$, where $\Delta$ is a non-trivial amenable group and $\Lambda=\Lambda_1\times \Lambda_2$ is a direct product of any group (possibly trivial) $\Lambda_1$ and a non-amenable, weakly amenable, and bi-exact group $\Lambda_2$. 
	\end{itemize}
Then $M$ is strongly prime.
\end{ThmA}

	In section \ref{some observations}, we will show that the property (TFF) has a sufficient condition similar to strong primeness (Lemma \ref{tensor formula}), and hence strong primeness is actually a sufficient condition to the property (TFF) (Proposition \ref{s-prime to TF}). We note that strong primeness implies primeness, but the converse fails (Propositions \ref{s-prime to TF} and \ref{prime not to s-prime}). 

	We will also discuss unique prime factorization result, using the strong primeness. This particularly provides the first example of unique prime factorization result for \textit{infinite} tensor products. Below we say that a II$_1$ factor $M$ is \textit{semiprime} if for any tensor decomposition $M=M_1\ovt M_2$, at least one $M_i$ is amenable. 
The reason we use semiprimeness is that any infinite tensor product factor $M$ is McDuff (i.e.\ $M\simeq M\ovt R$ for the hyperfinite II$_1$ factor $R$), so tensor components are determined up to tensor product with $R$. 

\begin{ProA}\label{prime factorization}
	Let $m,n\in \N \cup \{\infty\}$. Let $M_i$ be strongly prime $\rm II_1$ factors, and $N_j$ any $\rm II_1$ factors such that $\ovt_{i=1}^m M_i = \ovt_{j=1}^n N_j =:M$. 
Then there is a unique map $\sigma\colon \{1,2,\ldots, m\} \to \{1,2,\ldots, n\}$ such that $M_i \preceq_M N_{\sigma(i)}$ for all $i\in \{1,2,\ldots, m\}$. 
In this case, the following statements hold true.
\begin{itemize}
	\item The map $\sigma$ is surjective if and only if all $N_j$ are non-amenable.
	\item The map $\sigma$ is injective if and only if all $N_{\sigma(i)}$ are semiprime. 
\end{itemize}
	Thus the map $\sigma$ is bijective if all $N_j$ are non-amenable and semiprime. In this case for each $i\in \{1,2,\ldots, m\}$, $N_{\sigma(i)}$ is isomorphic to $M_i^{t_i}\ovt P_i$ for some $t_i>0$ and some amenable factor $P_i$.
\end{ProA}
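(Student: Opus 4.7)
My plan is to adapt the Ozawa--Popa unique prime factorization strategy, with the strong primeness of each $M_i$ replacing the combined bi-exactness/primeness input used in their argument. The core dichotomy provided by strong primeness is that, for any tensor decomposition $M = K \ovt L$, one has $u M_i u^* \subset K^t$ or $u M_i u^* \subset L^{1/t}$ after some unitary conjugation and amplification. Applied repeatedly to the splittings $M = N_j \ovt \widehat{N}_j$, with $\widehat{N}_j := \ovt_{k \ne j} N_k$, this will funnel each $M_i$ into some $N_{\sigma(i)}$. Concretely, for fixed $i$ I apply strong primeness to $M = N_1 \ovt \widehat{N}_1$: either $u M_i u^* \subset N_1^t$, in which case we set $\sigma(i) = 1$, or $u M_i u^* \subset \widehat{N}_1^{1/t}$, in which case I re-express $\widehat{N}_1^{1/t}$ as a tensor product with $u M_i u^*$ as one factor (via its relative commutant $(u M_i u^*)' \cap \widehat{N}_1^{1/t}$) and iterate with the splitting involving $N_2$. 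For finite $n$ this terminates in at most $n$ steps; for $n = \infty$ one needs an additional absorption/termination argument, which should follow from the diffuseness of $M_i$.

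For uniqueness of $\sigma$, suppose $M_i \preceq_M N_j$ and $M_i \preceq_M N_{j'}$ with $j \ne j'$. The commutant dualization for tensor-product factors (standard in UPF arguments, e.g.\ in \cite{OP03}) yields $\widehat{N}_j \preceq_M B_i$ and $\widehat{N}_{j'} \preceq_M B_i$, where $B_i := \ovt_{k \ne i} M_k$. Since $\widehat{N}_j \vee \widehat{N}_{j'} = M$ when $j \ne j'$, combining these two intertwinings via a joint intertwining argument exploiting the tensor structure produces $M \preceq_M B_i$, which is impossible because $M_i$ is diffuse and commutes with $B_i$ inside $M = M_i \ovt B_i$.

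For the surjectivity characterization, if $j_0 \notin \sigma(\{1, \ldots, m\})$ then uniqueness of $\sigma$ forces $M_i \not\preceq_M N_{j_0}$ for every $i$, so strong primeness gives $u_i M_i u_i^* \subset \widehat{N}_{j_0}^{1/t_i}$; combining these embeddings via the commuting tensor structure of the $M_i$'s yields $M \preceq_M \widehat{N}_{j_0}$, which forces $N_{j_0}$ to be amenable. Conversely, strongly prime implies non-amenable (since $R \simeq R \ovt R$ is not strongly prime), so no $M_i$ can intertwine into an amenable $N_j$. For injectivity: if $\sigma(i_1) = \sigma(i_2) = j$ with $i_1 \ne i_2$, then $N_j$ receives, after amplification, two commuting tensor-independent copies of $M_{i_1}$ and $M_{i_2}$ coming from their commuting structure inside $M$; taking relative commutants inside $N_j$ produces a tensor decomposition of $N_j$ into two non-amenable pieces, contradicting semiprimeness. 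When $\sigma$ is bijective, the final isomorphism $N_{\sigma(i)} \cong M_i^{t_i} \ovt P_i$ will follow from the relative commutant computation in $M = \ovt_i M_i$: the complement of $M_i^{t_i}$ inside $N_{\sigma(i)}$ is forced by dimensional matching and non-existence of further non-amenable pieces to be an amenable factor $P_i$.

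The main obstacle I foresee is the amplification bookkeeping in the inductive construction of $\sigma$: each application of strong primeness introduces a new scaling factor, and re-applying the dichotomy inside $\widehat{N}_1^{1/t}$ requires identifying the relative commutant of $u M_i u^*$ there with the intrinsic tensor complement, a regularity/factoriality issue in the spirit of \cite{OP03}. A second delicate point is the combination step in the surjectivity and injectivity arguments, where individual embeddings obtained with different unitaries $u_i$ must be promoted to a single joint embedding of $M$ or of a commuting pair inside $N_j$; this is likely the technical heart of the proof and may demand an intertwining-combination lemma analogous to those in the classical UPF literature. Finally, the infinite case $n = \infty$ needs a separate absorption argument to ensure that the iterative descent actually terminates rather than descending indefinitely into smaller and smaller tails of the tensor product.
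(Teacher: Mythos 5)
Your overall strategy (the strong-primeness dichotomy, unitary conjugation into amplified tensor factors via Lemma \ref{intertwining tensor}, relative-commutant bookkeeping, and a Sako-type combination lemma such as Lemma \ref{Sa09}) is the same as the paper's, but there are three genuine gaps. The most serious one is the infinite case, which is precisely the new content of the proposition. Both the termination of your iterative descent when $n=\infty$ and the deduction ``$N_{j_0}$ is amenable'' when $m=\infty$ rest on the following fact, which you do not prove and which does not follow from diffuseness: if a subfactor $B$ satisfies $B \preceq_M \ovt_{i\geq k} M_i$ for \emph{every} $k$, then $B$ is amenable (Lemma \ref{lemma5} in the paper). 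Diffuseness is not the right obstruction --- a diffuse amenable factor can perfectly well intertwine into every tail of an infinite tensor product --- and the paper proves this lemma by a nontrivial argument of Ioana-type (flip automorphisms $\Sigma_{\mathcal{F}}$ on $M\ovt M$ and a $B$-central state on $\B(L^2(M))$), not by any softness of $M_i$. Without this lemma your construction of $\sigma$ for $n=\infty$ and your surjectivity/amenability dichotomy for $m=\infty$ do not close.

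Second, your uniqueness argument is flawed as stated: from $M_i\preceq_M N_j$ and $M_i\preceq_M N_{j'}$ you dualize to $\widehat{N}_j\preceq_M B_i$ and $\widehat{N}_{j'}\preceq_M B_i$ and then want to combine these into $M\preceq_M B_i$, but $\widehat{N}_j$ and $\widehat{N}_{j'}$ do \emph{not} commute (they share all factors $N_k$, $k\neq j,j'$), so the standard combination lemma is unavailable. The paper's Lemma \ref{lemma2} avoids this: conjugate $uM_iu^*\subset N_j^t$, note that then $M_i\preceq_M N_{j'}$ forces, via Lemma \ref{tensor case}, $uM_iu^*\preceq_{N\cap (N_{j'})'}\C$ inside the tensor complement of (the amplified) $N_{j'}$, contradicting diffuseness of $M_i$; no combination of intertwinings with different unitaries is needed. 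Third, you only prove one direction of the injectivity equivalence (all $N_{\sigma(i)}$ semiprime $\Rightarrow$ $\sigma$ injective). The converse --- if some $N_{\sigma(i)}$ admits a decomposition $N_{j_0}^1\ovt N_{j_0}^2$ into non-amenable factors then $\sigma$ has a collision --- requires an extra argument: one must find $M_k\preceq_M N_{j_0}^1$ and $M_l\preceq_M N_{j_0}^2$ (in the paper this uses the same Claim as for surjectivity, hence again Lemma \ref{lemma5} when $m=\infty$), note $k\neq l$ by uniqueness, and conclude $\sigma(k)=\sigma(l)=j_0$. The combination step you flag for surjectivity and injectivity is indeed handled by Lemma \ref{Sa09} (regularity of the tensor factor $N_j$ plus triviality of the normalizer's relative commutant), so that part of your plan is sound, but as written the proposal is missing the central amenability lemma and half of one equivalence, and its uniqueness step would fail.
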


	In the proposition, if we assume all $N_j$ are prime, then the map $\sigma$ is bijective and $M_i$ and $N_{\sigma(i)}$ are stably isomorphic for all $i$. 
	We note that the map $\sigma$ in the proposition is surjective whenever $m<\infty$, since $M=M_1\ovt\cdots \ovt M_m$ is full (Proposition \ref{prime not to s-prime} and \cite[Corollary 2.3]{Co75}) and so $N_j$ can not be amenable.

\bigskip

\noindent
{\bf Acknowledgement.} The author would like to thank R$\rm \acute{e}$mi Boutonnet, Cyril Houdayer, Adrian Ioana, Narutaka Ozawa and Stefaan Vaes for fruitful conversations. He also thank the referee for pointing out that a part of the proof of Theorem \ref{thmC} can be simplified using Lemma \ref{Sa09}. 
He was supported by JSPS, Research Fellow of the Japan Society for the Promotion of Science.

\section{\bf Preliminaries}\label{Preliminaries}

In this article, all von Neumann algebras that we consider are assumed to be finite and $\sigma$-finite, namely, they admit faithful normal tracial states.

\subsection*{\bf General properties for the class $\mathcal{S}_{\rm factor}$}

	Let $M$ be a II$_1$ factor and $\Tr$ a trace on $M\ovt \B(\ell^2)$. Then since the trace on $M\ovt \B(\ell^2)$ is unique up to scalars, there is a  homomorphism 
	$$\mathrm{Mod}\colon \Aut(M\ovt \B(\ell^2))\to \mathcal{F}(M); \quad \Tr\circ\alpha=\mathrm{Mod}(\alpha)\Tr, \quad \alpha\in \Aut(M).$$
It is then not difficult to see that $\mathrm{Mod}$ is surjective and continuous (with respect to the \textit{u-topology} on $\Aut(M\ovt \B(\ell^2))$). 
Since $\Aut(M\ovt \B(\ell^2))$ with the u-topology is a Polish group when $M$ has separable predual, we get the following proposition. This is the only known general property for the class $\mathcal{S}_{\rm factor}$ so far.

\begin{Pro}\label{general}
	For any group $G\in \mathcal{S}_{\rm factor}$, there is a Polish group $P$ and a continuous surjective homomorphism from $P$ onto $G$.
\end{Pro}

	 Using this, one can show for example that any group in $\mathcal{S}_{\rm factor}$ is a Borel subset of $\R^*_+$. 
Our main observation will provide the second general property for $\mathcal{S}_{\rm factor}$.

\subsection*{\bf Popa's  intertwining technique}\label{intertwining technique}

	We recall Popa's intertwining theorem. This is the main tool in the deformation/rigidity theory.

\begin{Thm}[{\cite{Po01,Po03}}]\label{Popa embed}
	Let $M$ be a finite von Neumann algebra with trace $\tau$, $p$ and $q$ projections in $M$, $A\subset pMp$ and $B\subset qMq$ von Neumann subalgebras with $\tau$-preserving conditional expectations $E_A$ and $E_B$. Then the following conditions are equivalent.
	\begin{itemize}
		\item[$\rm (i)$] There exist non-zero projections $e\in A$, $f\in B$, a unital normal $*$-homomorphism $\theta \colon eAe \rightarrow fBf$, and a partial isometry $v\in eMf$ such that $v\theta(x)=xv$ for all $x\in eAe$.
		\item[$\rm (i)'$] There exist a nonzero normal $*$-homomorphism $\psi\colon A \rightarrow B\ovt \mathbb{M}_n$ for some $n\in \mathbb{N}$ and a nonzero partial isometry $w\in (p\otimes e_{1,1})(M\ovt \mathbb{M}_{n})$ such that $w\psi(x)=(x\otimes e_{1,1})w$ for all $x\in A$, where $(e_{i,j})_{i,j}$ is a fixed matrix unit in $\mathbb{M}_n$. 
		\item[$\rm (ii)$] There exists no net $(w_i)_i$ of unitaries in $A$ such that $\|E_{B}(b^*w_i a)\|_{2,\tau} \rightarrow 0$ for any $a,b\in pMq$.
		\item[$\rm (iii)$] There exists a positive element $d\in p\langle M,B\rangle p\cap A'$ such that $0<\mathrm{Tr}_{\langle M,B\rangle}(d)<\infty$, where $\mathrm{Tr}_{\langle M,B\rangle}$ is the canonical trace on $\langle M,B\rangle$ (with respect to $\tau$).
	\end{itemize}
	We write $A\preceq_M B$ if one of these conditions holds.
\end{Thm}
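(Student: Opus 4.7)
The plan is to work with the Jones basic construction $\langle M,B\rangle$, equipped with its Jones projection $e_B$ and canonical semifinite trace $\mathrm{Tr}_{\langle M,B\rangle}$ (normalised by $\mathrm{Tr}_{\langle M,B\rangle}(xe_By)=\tau(xy)$ for $x,y\in M$), and to view each of (i), (i)$'$, (iii) as a different incarnation of the existence of a nonzero $A$-central element of finite canonical trace inside $p\langle M,B\rangle p$. The scheme is to close the cycle (i)$\Rightarrow$(i)$'$$\Rightarrow$(iii)$\Rightarrow$(i) and to establish (i)$\Rightarrow\neg$(ii) together with the contrapositive $\neg$(ii)$\Rightarrow$(iii).

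The implications among (i), (i)$'$, (iii) are the formal half. Since (i)$'$ reduces to (i) on taking $n=1$ and extending $\theta$ trivially, the step (i)$\Rightarrow$(i)$'$ is immediate. For (i)$'$$\Rightarrow$(iii), after identifying $\langle M,B\ovt\mathbb{M}_n\rangle$ with $\langle M,B\rangle\ovt\mathbb{M}_n$, the element $ww^*$ lies in $p\langle M,B\rangle p\cap A'$ and is dominated by the finite projection $p\otimes e_{1,1}$, so has finite positive trace. Conversely, given $d$ as in (iii), a nonzero spectral projection $q'$ of $d$ produces a finite $A$-central projection in $\langle M,B\rangle$; using the standard fact that every finite projection in $\langle M,B\rangle$ is equivalent to a subprojection of $e_B$, this equivalence can be implemented by a partial isometry $v\in M$ intertwining $A$ into $B$, which yields the corner embedding $\theta\colon eAe\to fBf$ of (i).

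For (i)$\Rightarrow\neg$(ii) I would simply test against $a=b=v$: the relation $v\theta(x)=xv$ forces $v^*w_iv=(v^*v)\theta(w_i)$ for every unitary $w_i\in\mathcal{U}(eAe)$, so $\|E_B(v^*w_iv)\|_{2,\tau}\geq\tau(v^*v)^{1/2}>0$ uniformly, contradicting (ii) once the $w_i$ are padded to unitaries of $A$.

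The main obstacle is $\neg$(ii)$\Rightarrow$(iii), which is Popa's averaging argument. Suppose there exist $a,b\in pMq$ and $\delta>0$ with $\|E_B(b^*ua)\|_{2,\tau}\geq\delta$ for every $u\in\mathcal{U}(A)$. A direct computation using $e_Bxe_B=E_B(x)e_B$ and cyclicity of $\mathrm{Tr}_{\langle M,B\rangle}$ identifies $\|E_B(b^*ua)\|_{2,\tau}^2$ with $\mathrm{Tr}_{\langle M,B\rangle}(uSu^*\,T)$, where $S:=ae_Ba^*$ and $T:=be_Bb^*$ both have finite canonical trace. Form the weakly compact convex set $K:=\overline{\operatorname{co}}^{\,w}\{uSu^*:u\in\mathcal{U}(A)\}\subset p\langle M,B\rangle p$; the unique element $d_0\in K$ of minimal $\|\cdot\|_{2,\mathrm{Tr}}$-norm is $A$-central by the usual variational argument, and the lower bound forces $\mathrm{Tr}_{\langle M,B\rangle}(d_0T)\geq\delta^2>0$, so $d_0\neq 0$. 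Passing to $d_0^*d_0$ and truncating by a nonzero spectral projection supported away from $0$ then produces the required positive $A$-central element $d\in p\langle M,B\rangle p\cap A'$ with $0<\mathrm{Tr}_{\langle M,B\rangle}(d)<\infty$.
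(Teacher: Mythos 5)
The paper does not prove this statement: it is quoted from Popa's work \cite{Po01,Po03} as background, so there is no internal proof to compare against, and I am judging your sketch against the standard argument. Your overall architecture --- reading each of (i), (i)$'$, (iii) as the existence of an $A$-central, $\Tr$-finite element of $p\langle M,B\rangle p$ and closing the loop with the averaging trick --- is indeed the standard one. But note first a polarity error running through your whole scheme: condition (ii) as stated is the \emph{non}-existence of a net of unitaries, so it is (ii) itself, not $\neg$(ii), that must be equivalent to (i) and (iii). Your plan ``(i)$\Rightarrow\neg$(ii)'' and ``$\neg$(ii)$\Rightarrow$(iii)'' would prove $\neg$(ii) equivalent to (i), the opposite of the assertion. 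The computations you describe are in fact the right ones for (i)$\Rightarrow$(ii) and (ii)$\Rightarrow$(iii) (a uniform lower bound rules out any net tending to zero, and such a bound is what the failure of the net condition supplies), so this is bookkeeping, but it must be repaired.

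Two steps have genuine mathematical gaps. First, in (i)$'\Rightarrow$(iii) the element $ww^*$ is a projection of $M\ovt\M_n$, and nonzero projections of $M$ generally have \emph{infinite} canonical trace in $\langle M,B\rangle$ (the trace of a projection $p\in M$ computes the $B$-dimension of $pL^2(M)$); commuting with $A\otimes e_{1,1}$ and sitting under $p\otimes e_{1,1}$ only produces an element of $pMp\cap A'$, which is vacuous. The correct element is $w(e_B\otimes 1_n)w^*$: the same intertwining computation shows it commutes with $A\otimes e_{1,1}$, and its canonical trace equals $(\tau\otimes\mathrm{tr}_n)(w^*w)\in(0,\infty)$. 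Second, the single test pair $a=b=v$ cannot give the uniform lower bound you claim: for $u\in\mathcal{U}(A)$ one has $E_B(v^*uv)=E_B(v^*v)\,\theta(eue)$, and $eue$ may vanish (take $u$ with $ueu^*\perp e$), so the estimate fails, and unitaries of $eAe$ do not ``pad'' to unitaries of $A$ compatibly with it. Symmetrically, negating the net condition in (ii) only yields a \emph{finite family} $(a_k,b_k)$ with $\sum_k\|E_B(b_k^*ua_k)\|_{2,\tau}^2\ge\delta$ for all $u$, not a single pair. Both points are repaired in the standard way --- run the averaging with $S=\sum_k a_ke_Ba_k^*$ and $T=\sum_k b_ke_Bb_k^*$, and obtain (i)$\Rightarrow$(ii) by passing through (iii) rather than by a direct test --- but as written these steps do not go through. (Smaller glosses of the same kind: a $\Tr$-finite projection of $\langle M,B\rangle$ need not be equivalent to a subprojection of $e_B$ itself, only of $e_B\otimes 1_n$ after amplification, and extracting a partial isometry $v\in M$ from an equivalence in $\langle M,B\rangle$ requires the polar decomposition of a suitable vector in $L^2(M)$; also $\theta$ in (i) is defined only on the corner $eAe$, so it cannot be ``extended trivially'' to $A$ --- the step (i)$\Rightarrow$(i)$'$ needs the usual amplification by partial isometries of $A$ with ranges covering a central cut of $p$.)
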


	Note that when $B = \C$, $A\not\preceq_M\C$ if and only if $A$ is diffuse. 
We next observe some elementary lemmas.

\begin{Lem}[{\cite[Lemma 4.6]{HI15}}]\label{tensor case}
	Let $M$ and $N$ be finite von Neumann algebras, $p$ a projection in $M$, $A \subset  M $, $N_0\subset N$ and $B\subset M$ finite von Neumann subalgebras. 
Then $A \preceq_M B$ if and only if $A \otimes \C 1_N \preceq_{M \ovt N} B \otimes \C 1_N$ if and only if $A \otimes N_0 \preceq_{M \ovt N} B \ovt N$.
\end{Lem}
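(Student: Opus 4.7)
\smallskip
\noindent\textbf{Proof proposal.} The plan is to establish the two implications $(a) \Rightarrow (c)$ and $(c) \Rightarrow (a)$ of the lemma between conditions $(a)$: $A \preceq_M B$ and $(c)$: $A \otimes N_0 \preceq_{M \ovt N} B \ovt N$, since the middle condition $(b)$: $A \otimes \C 1_N \preceq_{M \ovt N} B \otimes \C 1_N$ is merely the specialization of $(c)$ to $N_0 = \C 1_N$, so the full chain of equivalences then follows at once.

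For $(a) \Rightarrow (c)$, I would use characterization (i) of Theorem~\ref{Popa embed}. Starting from intertwining data $e \in A$, $f \in B$, $\theta \colon eAe \to fBf$ and $v \in eMf$ for $A \preceq_M B$, the tensored data
\[ e \otimes 1_N \in A \otimes N_0,\qquad f \otimes 1_N \in B \ovt N,\qquad \theta \otimes \id_{N_0} \colon eAe \otimes N_0 \to fBf \ovt N,\qquad v \otimes 1_N \]
should witness $(c)$: the intertwining identity $(v \otimes 1)(\theta \otimes \id)(x \otimes y) = (x \otimes y)(v \otimes 1)$ is immediate from $v\theta(x)=xv$ on simple tensors $x \otimes y$ and extends by normality.

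For $(c) \Rightarrow (a)$, I would argue the contrapositive via characterization (ii). If $A \not\preceq_M B$, I pick a net $(u_i) \subset \mathcal{U}(A)$ with $\|E_B(b^* u_i a)\|_{2} \to 0$ for all $a,b \in M$. Then $(u_i \otimes 1_N)$ is a net of unitaries in $A \otimes N_0$ (using $1_N \in N_0$), and, using $E_{B \ovt N} = E_B \otimes \id_N$, on simple tensors $a = x_1 \otimes y_1$, $b = x_2 \otimes y_2 \in M \ovt N$ one computes
\[ \bigl\|E_{B \ovt N}\bigl(b^*(u_i \otimes 1) a\bigr)\bigr\|_{2} = \|E_B(x_2^* u_i x_1)\|_{2} \cdot \|y_2^* y_1\|_{2} \longrightarrow 0. \]
This should force $A \otimes N_0 \not\preceq_{M \ovt N} B \ovt N$ by (ii), yielding the desired contradiction.

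The one step requiring care is extending this last convergence from simple tensors to arbitrary $a,b \in M \ovt N$, which is genuinely needed to apply (ii). I would handle it by a Kaplansky density approximation: for $a \in M \ovt N$, pick finite linear combinations $a_n$ of simple tensors with $\|a_n\| \leq \|a\|$ and $\|a_n - a\|_{2} \to 0$, and bound the error uniformly in $i$ by $\|E_{B \ovt N}(b^*(u_i \otimes 1)(a - a_n))\|_2 \leq \|b\|\,\|a - a_n\|_2$, then let $n$ be large followed by $i$. This density step is the main technical ingredient, but it is entirely standard, so I do not foresee any serious obstacle.
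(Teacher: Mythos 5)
Your two implications between (a) $A \preceq_M B$ and (c) $A \otimes N_0 \preceq_{M\ovt N} B\ovt N$ are correct and are the standard argument (the paper itself offers no proof, it only cites \cite[Lemma 4.6]{HI15}, so there is no alternative method to compare against): tensoring the data of Theorem~\ref{Popa embed}(i) with $1_N$, resp.\ $\id_{N_0}$, gives (a)$\Rightarrow$(c), and the contrapositive of (c)$\Rightarrow$(a) via condition (ii), using $E_{B\ovt N}=E_B\otimes \id_N$ on elementary tensors together with the estimate $\|E_{B\ovt N}(b^*(u_i\otimes 1)(a-a_n))\|_2\leq \|b\|\,\|a-a_n\|_2$ (and its analogue in the $b$-variable, done after fixing a finite-sum approximant of $a$), works as you say; note that Kaplansky is not even needed, since the error bounds only involve $\|b\|$, $\|a_m\|$ and $\|\cdot\|_2$-distances.

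The gap is your opening reduction: condition (b) $A\otimes \C 1_N \preceq_{M\ovt N} B\otimes \C 1_N$ is \emph{not} the specialization of (c) to $N_0=\C 1_N$; that specialization reads $A\otimes\C 1_N \preceq_{M\ovt N} B\ovt N$, whose target is $B\ovt N$ rather than $B\otimes \C 1_N$. Intertwining into the larger algebra $B\ovt N$ is a priori weaker than intertwining into $B\otimes\C 1_N$ (this discrepancy between the two targets is exactly what gives the lemma its content), so the equivalence with (b) does not ``follow at once'' from (a)$\Leftrightarrow$(c), and as written it is not proved. The repair is immediate with your own tools: for (a)$\Rightarrow$(b) use $e\otimes 1_N$, $f\otimes 1_N$, $\theta(\cdot)\otimes 1_N$ and $v\otimes 1_N$, whose image lies in $fBf\otimes\C 1_N$; for (b)$\Rightarrow$(a) run the same contrapositive computation with $E_{B\otimes\C 1_N}=E_B\otimes\tau_N$, which on elementary tensors gives $\|E_{B\otimes\C 1_N}\bigl(b^*(u_i\otimes 1)a\bigr)\|_2=\|E_B(x_2^*u_ix_1)\|_2\,|\tau_N(y_2^*y_1)|\to 0$, followed by the same density estimate. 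With these two implications added, your argument establishes the full lemma.
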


\begin{Lem}\label{crossed product case}
	Let $B$ be a finite von Neumann algebra and $\Gamma$ a discrete group acting on $B$ as a trace preserving action. Write $M:=B\rtimes\Gamma$. Then $M\preceq_M B$ if and only if $L\Gamma\preceq_M B$ if and only if $\Gamma$ is a finite group. 
\end{Lem}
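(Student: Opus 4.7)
The plan is to prove the chain ``$\Gamma$ finite $\Rightarrow M \preceq_M B \Rightarrow L\Gamma \preceq_M B \Rightarrow \Gamma$ finite,'' working throughout with characterization (ii) of Theorem \ref{Popa embed}. The middle implication is immediate, since every net of unitaries in $L\Gamma$ also lies in $\mathcal{U}(M)$, so any witness of $L\Gamma \not\preceq_M B$ is a fortiori a witness of $M \not\preceq_M B$.

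For ``$\Gamma$ finite $\Rightarrow M \preceq_M B$,'' I would argue by contradiction: suppose $M \not\preceq_M B$, so there is a net of unitaries $(w_i) \subset M$ with $\|E_B(b^* w_i a)\|_{2,\tau} \to 0$ for all $a, b \in M$. Expanding $w_i = \sum_{g\in\Gamma} w_{i,g} u_g$ with $w_{i,g}\in B$ and testing against $b=1$, $a = u_g$ gives $E_B(w_i u_g) = w_{i,g^{-1}}$, so every Fourier coefficient tends to $0$ in $\|\cdot\|_2$. Since $\Gamma$ is finite, $\|w_i\|_2^2 = \sum_{g\in\Gamma}\|w_{i,g}\|_2^2$ is a \emph{finite} sum that then tends to $0$, contradicting $\|w_i\|_2 = 1$.

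For the remaining direction (contrapositive: $\Gamma$ infinite $\Rightarrow L\Gamma \not\preceq_M B$), I would exhibit the net $(u_{g_F})$ in $L\Gamma$ indexed by finite subsets $F \subset \Gamma$ (directed by inclusion), with $g_F \in \Gamma \setminus F$. For $a = x u_h$, $b = y u_k$ in the algebraic span $\mathcal{A} := \mathrm{span}\{zu_\ell : z\in B,\ \ell\in\Gamma\}$, a Fourier computation yields $E_B(b^* u_g a) = 0$ except when $g = k h^{-1}$; by linearity, for general $a, b \in \mathcal{A}$ the expression vanishes for every $g$ outside a finite set determined by the supports of $a$ and $b$. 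To handle arbitrary $a, b \in M$, I would approximate in $\|\cdot\|_2$ by elements $a_n, b_n \in \mathcal{A}$ with $\|a_n\|_\infty \leq \|a\|_\infty$ and $\|b_n\|_\infty \leq \|b\|_\infty$ (Kaplansky density), and then control
\[
\|E_B(b^* u_g a) - E_B(b_n^* u_g a_n)\|_2 \leq \|b - b_n\|_2 \|a\|_\infty + \|b_n\|_\infty \|a - a_n\|_2
\]
using the $L^2$-contractivity of $E_B$ and the Hölder-type inequality $\|xy\|_2 \leq \|x\|_\infty \|y\|_2$ (twice). Since this bound is uniform in $g$, first fixing $n$ large and then choosing $F$ to contain the finite set of group elements associated to $a_n, b_n$ forces $\|E_B(b^* u_{g_F} a)\|_2 \to 0$ along the net.

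The only step requiring a bit of care is this last uniform approximation, since the naive Fourier truncation $\sum_{h \in F} a_h u_h$ of $a \in M$ need not have controlled operator norm; routing the estimate through Kaplansky density sidesteps the issue, and the rest is a routine intertwining calculation.
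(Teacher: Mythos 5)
Your proof is correct, and its overall structure --- the cycle ``$\Gamma$ finite $\Rightarrow M\preceq_M B \Rightarrow L\Gamma\preceq_M B \Rightarrow \Gamma$ finite'', with the last implication proved contrapositively by exhibiting unitaries $u_g$ with $g$ escaping any finite subset of $\Gamma$ --- is the same as the paper's. The differences lie in which characterizations of Theorem \ref{Popa embed} you invoke and in the level of detail. For ``$\Gamma$ finite $\Rightarrow M\preceq_M B$'' the paper simply notes that the canonical trace on the basic construction $\langle M,B\rangle$ is then finite and applies criterion (iii), whereas you stay with criterion (ii) and derive a contradiction from the Parseval identity $\|w_i\|_2^2=\sum_{g\in\Gamma}\|w_{i,g}\|_2^2$ (valid because the action is trace preserving); this is more elementary and avoids the basic construction altogether. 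For $M\preceq_M B\Rightarrow L\Gamma\preceq_M B$ the paper uses criterion (i)$'$ and restricts the intertwining homomorphism to $L\Gamma$, while you pass to the contrapositive through (ii), noting that a witnessing net of unitaries in $L\Gamma$ is in particular one in $M$; both are valid. Finally, for $\Gamma$ infinite the paper merely asserts that a sequence of pairwise distinct group elements satisfies (ii), while you supply the Fourier computation ($E_B(b^*u_ga)=0$ unless $g=kh^{-1}$ for $a,b$ in the algebraic span) and the Kaplansky-density approximation needed to pass to arbitrary $a,b\in M$ uniformly in $g$ --- a step the paper leaves implicit and which you handle correctly.
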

\begin{proof}
	If $\Gamma$ is a finite group, then the canonical trace of the basic construction $\langle M,B \rangle$ is finite. So by Theorem \ref{Popa embed}(iii), we get $M\preceq_M B$. 
If $\Gamma$ is infinite, then we can find a sequence $g_n\in \Gamma$ such that all $g_n$ are distinct with each other. Then it satisfies Theorem \ref{Popa embed}(ii) and hence $L\Gamma\not\preceq_MB$. Finally by Theorem \ref{Popa embed}$\rm (i)'$, it is obvious that $M\preceq_MB$ implies $L\Gamma\preceq_MB$.
\end{proof}

\begin{Lem}\label{free product case}
	Let $M=M_1*M_2$ be a tracial free product von Neumann algebra, $p\in M_1$ a projection, and let $A\subset pM_1p$ be a diffuse von Neumann subalgebra. Then we have $A\not\preceq_MM_2$.
\end{Lem}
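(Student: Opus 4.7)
The plan is to verify criterion (ii) of Theorem \ref{Popa embed}: I will exhibit a sequence of unitaries $(u_n)_{n\geq 1}$ in $A$ such that $\|E_{M_2}(b^* u_n a)\|_{2,\tau} \to 0$ as $n \to \infty$ for every $a, b \in pM$. Since $A$ is diffuse, I can pick such $(u_n)$ so that additionally $u_n \to 0$ in the weak operator topology, i.e.\ $\tau(u_n x) \to 0$ for every $x \in M$.

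Because $E_{M_2}$ is a $\|\cdot\|_{2,\tau}$-contraction and $(a,b) \mapsto E_{M_2}(b^* u_n a)$ is bilinear and uniformly bounded in $n$ by $\|a\|\,\|b\|$, a standard $\|\cdot\|_{2,\tau}$-density argument reduces the problem to $a$ and $b$ drawn from a fixed total subset of $M$. The free product yields the orthogonal decomposition
\[
	L^2(M) = \C 1 \oplus \bigoplus_{k\geq 1}\ \bigoplus_{\substack{i_1, \ldots, i_k \in \{1,2\} \\ i_j \neq i_{j+1}}} L^2(M_{i_1})^\circ \otimes \cdots \otimes L^2(M_{i_k})^\circ,
\]
where $L^2(M_i)^\circ := L^2(M_i) \ominus \C 1$. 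Correspondingly, the scalar $1$ together with \emph{alternating centered words} $z_1 z_2 \cdots z_k$ (meaning $z_j \in M_{i_j}$ with $\tau(z_j) = 0$ and $i_j \neq i_{j+1}$) span a $\|\cdot\|_{2,\tau}$-dense subspace of $M$, so I may fix $a$ and $b$ of this form.

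For such $a,b$, the product $b^* u_n a$ is a word in $M_1 \cup M_2$ with $u_n \in M_1$ sitting in the middle. Absorbing the $M_1$-neighbors of $u_n$ (the innermost factors of $b^*$ and $a$, if they happen to lie in $M_1$) into $u_n$ produces a single factor $w_n \in M_1$, linear in $u_n$. Writing $w_n = \tau(w_n)\,1 + w_n^\circ$ with $\tau(w_n^\circ)=0$, I expand $b^* u_n a$ as a finite sum of two kinds of summands: $(\mathrm{a})$ a scalar $\tau(w_n)$ times a fixed word in $M$ not involving $u_n$, and $(\mathrm{b})$ an alternating centered word in $M$ one of whose factors is $w_n^\circ \in M_1$. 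Type $(\mathrm{a})$-summands have $E_{M_2}$-image of $\|\cdot\|_{2,\tau}$-norm at most $|\tau(w_n)| \cdot C$ with $C$ depending only on $a,b$; since $u_n \to 0$ weakly and $w_n$ is a fixed linear expression in $u_n$, this tends to $0$. Type $(\mathrm{b})$-summands are killed by $E_{M_2}$: any alternating centered word containing a trace-zero factor from $M_1$ is orthogonal to $L^2(M_2)$, which is the freeness identity $\tau(x_1 \cdots x_m) = 0$ for alternating centered words, applied to $y \cdot (\text{word})$ for $y \in M_2$ via the decomposition $y = \tau(y)1 + y^\circ$.

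The hard part will be the combinatorial book-keeping behind the previous paragraph: one must arrange the absorption of $M_1$-neighbors of $u_n$ and the subsequent centering of $w_n$ so that every summand is manifestly of one of the two stated types, with the remaining word still alternating and centered and with the absorbed factors of $a,b$ coming from $M_2$. The analytic inputs --- freeness of $M_1$ and $M_2$ in $M$, and the weak convergence $u_n \to 0$ --- then combine to give $\|E_{M_2}(b^* u_n a)\|_{2,\tau} \to 0$, and Theorem \ref{Popa embed}(ii) yields $A \not\preceq_M M_2$.
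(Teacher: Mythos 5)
Your proposal is correct and follows essentially the same route as the paper: choose unitaries $u_n\in A$ tending weakly to $0$ (possible since $A$ is diffuse), check by direct computation on scalars and reduced words that $\|E_{M_2}(b^*u_na)\|_{2,\tau}\to 0$, and invoke Theorem \ref{Popa embed}(ii); your absorption-and-centering book-keeping is exactly the ``simple calculation'' the paper leaves to the reader, and it goes through as you describe.
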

\begin{proof}
	We may assume $M_2\neq\C$. Let $(u_n)_n$ be a sequence of unitaries in $A$ which converges to 0 weakly. By simple calculations, one can show that if each $a,b\in M$ is a scalar or a reduced word, then $\|E_{M_2}(b^*u_na)\|_2$ converges to 0 as $n \rightarrow \infty$. Hence by Theorem \ref{Popa embed} (ii),  $A\not\preceq_MM_2$ holds.
\end{proof}

	In the lemma below, we denote the \textit{normalizer} for an inclusion $B \subset M$ by $\mathcal{N}_M(B):= \{ u \in \mathcal{U}(M) \mid uBu^* = B \}$.

\begin{Lem}\label{Sa09}
	Let $B\subset M$ be finite von Neumann algebras, $p$ a projection in $M$, and $A, P\subset pM p$ von Neumann subalgebras. Assume that $A$ and $P$ commute. Assume $A\preceq_M B$ and $P\preceq_MB$. If $B$ is regular (i.e.\ $\mathcal{N}_M(B)''=M$) and $\mathcal{N}_{pMp}(A)'\cap pMp= \mathbb{C}p$, then we have $(A\vee P)\preceq_M B$.
\end{Lem}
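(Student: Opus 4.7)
The strategy is to apply characterization (iii) of Theorem~\ref{Popa embed} to $P\preceq_M B$, average the resulting finite-trace positive element in $p\langle M,B\rangle p$ over $\mathcal{U}(A)$ to land in $(A\vee P)'\cap p\langle M,B\rangle p$, and then verify non-vanishing using the remaining three hypotheses. Concretely, by (iii) applied to $P\preceq_M B$ there is a positive $d\in P'\cap p\langle M,B\rangle p$ with $0<\Tr(d)<\infty$, where $\Tr$ denotes the canonical semifinite trace on $\langle M,B\rangle$. Since $A$ and $P$ commute, every $u\in\mathcal{U}(A)$ lies in $P'\cap pMp$, so $udu^*$ remains positive, stays in $P'\cap p\langle M,B\rangle p$, and has the same trace. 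Let $\mathcal{K}$ be the weakly closed convex hull of $\{udu^*:u\in\mathcal{U}(A)\}$, and let $d_0\in\mathcal{K}$ be the unique element of minimal $L^2(\Tr)$-norm; the standard uniqueness of the Hilbert-space minimizer forces $d_0$ to be invariant under conjugation by $\mathcal{U}(A)$, so $d_0\in A'\cap P'\cap p\langle M,B\rangle p=(A\vee P)'\cap p\langle M,B\rangle p$ with $\Tr(d_0)\leq\Tr(d)<\infty$. Once $d_0\neq 0$ is verified, characterization (iii) yields $A\vee P\preceq_M B$.

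The crux is therefore to verify $d_0\neq 0$, and this is where the three remaining hypotheses must enter jointly. From $A\preceq_M B$, (iii) also provides a positive finite-trace element $d_A\in A'\cap p\langle M,B\rangle p$, while the quantitative negation of (ii) supplies $a_1,\dots,a_k\in pM$ and $\delta>0$ with $\sum_i\|E_B(a_i^*wa_i)\|_{2,\tau}^2\geq\delta$ for every $w\in\mathcal{U}(A)$. The plan is to translate this uniform lower bound into a lower bound of the form $\sum_i\Tr(d\cdot a_i^*w^*xwa_i)\geq\delta'>0$ for suitable test elements $x$ drawn from $p\mathcal{N}_M(B)p$, using regularity of $B$ to ensure that $\mathcal{N}_M(B)\cdot B\cdot\mathcal{N}_M(B)$ is weakly dense in $M$ so that the test family is rich enough. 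The hypothesis $\mathcal{N}_{pMp}(A)'\cap pMp=\mathbb{C}p$ rules out, via a Dixmier-style averaging argument in the factor $pMp$ (which this hypothesis forces $pMp$ to be), the existence of a non-scalar $\mathcal{N}_{pMp}(A)$-invariant corner where the convex combinations $\sum_jt_ju_jdu_j^*$ could escape to zero. Combining these ingredients, no such combination has arbitrarily small $L^2(\Tr)$-norm, whence $d_0\neq 0$.

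The non-vanishing step is the main obstacle; the averaging above and the final appeal to (iii) are routine. Coordinating the three remaining ingredients---the quantitative constant $\delta$ furnished by $A\preceq_M B$, the spanning property of $\mathcal{N}_M(B)$ coming from regularity of $B$, and the absence of nontrivial $\mathcal{N}_{pMp}(A)$-invariant corners in $pMp$ supplied by the normalizer condition---into a single uniform lower bound on the $L^2(\Tr)$-norms of convex combinations $\sum_jt_ju_jdu_j^*$ is the technical heart of the proof.
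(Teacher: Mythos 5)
Your overall framework is the right one and matches the paper's: take the finite-trace positive element supplied by condition (iii), average it over the unitary group of the commuting algebra, and take the unique minimal $\|\cdot\|_{2,\Tr}$-element of the weakly closed convex hull, which is then automatically $(A\vee P)'$-relative; the only real issue is non-vanishing. But that non-vanishing step, which you yourself flag as ``the technical heart,'' is precisely what you have not proved, and the route you sketch for it does not work as stated. You propose to extract from $A\preceq_M B$ the quantitative negation of (ii) (a uniform bound $\sum_i\|E_B(b_i^*wa_i)\|_{2,\tau}^2\geq\delta$ over $w\in\mathcal{U}(A)$) and to convert this into a uniform lower bound of the form $\sum_i\Tr(d\,a_i^*w^*xwa_i)\geq\delta'$ against the fixed element $d\in P'\cap p\langle M,B\rangle p$, with test elements $x$ from $\mathcal{N}_M(B)$ and a Dixmier-type argument from $\mathcal{N}_{pMp}(A)'\cap pMp=\C p$. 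There is no argument given (and none readily available) showing that the fixed $d$ pairs non-trivially with the relevant operators: $d$ could a priori be supported ``far away'' from the ranges of the $E_B(b_i^*wa_i)$, and a lower bound on $L^2$-norms of Fourier-type coefficients does not translate into a lower bound on $\Tr(d\,\cdot)$ against an unrelated finite-trace element. So as written the proof has a genuine gap exactly at its crux.

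The paper closes this gap by a different and quite specific mechanism (following Sako). Use $A\preceq_M B$ to produce a finite-trace \emph{projection} $d_A\in A'\cap p\langle M,B\rangle p$, and consider all conjugates $v^{\rm op}\,u\,d_A\,u^*\,(v^{\rm op})^*$ with $u\in\mathcal{N}_{pMp}(A)$ and $v\in\mathcal{N}_M(B)$, where $v^{\rm op}$ is the right action (this preserves commutation with $A$ and preserves $\Tr_{\langle M,B\rangle}$ because $v$ normalizes $B$). The supremum $d$ of these conjugates commutes with $\mathcal{N}_{pMp}(A)$ and with $\mathcal{N}_M(B)^{\rm op}$; regularity of $B$ forces $\langle M,B\rangle\cap(\mathcal{N}_M(B)^{\rm op})'=M$, and then the hypothesis $\mathcal{N}_{pMp}(A)'\cap pMp=\C p$ gives $d=p$. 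Consequently some \emph{finite} join of these conjugates is not orthogonal to a chosen finite-trace projection $d_P\in P'\cap p\langle M,B\rangle p$ coming from $P\preceq_M B$; replacing $d_A$ by that element one may assume $\Tr(d_Ad_P)>0$. Now average $d_A$ over $\mathcal{U}(P)$: since $d_P$ commutes with $\mathcal{U}(P)$ and $\Tr$ is tracial, $\langle ud_Au^*,d_P\rangle=\Tr(d_Ad_P)>0$ for every $u\in\mathcal{U}(P)$, so every element of the convex hull, and hence the minimal-norm element, pairs strictly positively with $d_P$ and is non-zero. This is where the regularity of $B$ and the triviality of the normalizer's relative commutant actually enter -- not through a Dixmier averaging in $pMp$ or through test elements in the basic construction -- and it is the step your proposal would need to supply.
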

\begin{proof}
	We follow the proof of \cite[\textrm{Lemma 33}]{Sa09}. 

	By Theorem \ref{Popa embed} (iii), we find a positive element $d_A\in A' \cap p\langle M, B \rangle p$ with $0<\Tr_{\langle M, B \rangle}(d)<\infty$. Taking a spectral projection, we may assume $d_A$ is a projection. 
Observe that for any $u\in \mathcal{N}_{pMp}(A)$ and $v\in \mathcal{N}_M(B)$, the element $v^{\rm op} u d_A u^*(v^{\rm op})^*$ satisfies the same condition as the one on $d_A$, where $v^{\rm op}$ is the right action of $v$ on $L^2(M)$ (we indeed have $\Tr_{\langle M, B \rangle}\circ \Ad v^{\rm op} = \Tr_{\langle M, B \rangle}$, since $(v^{\rm op})^* v e_B v^{\rm op} v^*=e_B$. See \cite[Exercise F.6]{BO08} for the construction of $\Tr_{\langle M, B \rangle}$ and use the fact that $\Tr_{\langle M, B \rangle}$ is uniquely determined by $\Tr_{\langle M, B \rangle}(x^*e_Bx) = \tau(x^*x)$ for $x\in M$). So the element $d:=\sup\{ v^{\rm op} u d_A u^*(v^{\rm op})^*\mid u\in \mathcal{N}_{pMp}(A),v\in \mathcal{N}_M(B)\}$ is contained in 
\begin{eqnarray*}
	&& A'\cap \mathcal{N}_{pMp}(A)' \cap p\langle M, B \rangle p \cap p(\mathcal{N}_M(B)^{\rm op})'p\\
	&=& \mathcal{N}_{pMp}(A)' \cap p\langle M, B \rangle p \cap pMp\\
	&=& \mathcal{N}_{pMp}(A)' \cap pMp = \mathbb{C}p.
\end{eqnarray*}
Hence we get $d=p$. Let now $d_P$ be a non-zero trace finite projection in $P' \cap p\langle M, B \rangle p$. Then since $d=p$, there are finite subsets $\mathcal{E} \subset \mathcal{N}_{pMp}(A)$ and $\mathcal{F}\subset \mathcal{N}_M(B)$ satisfying that $\vee_{u\in \mathcal{E},v\in\mathcal{F}}v^{\rm op} u d_A u^*(v^{\rm op})^*$ is not orthogonal to $d_P$. 
Thus up to exchanging $d_A$ with this element, we can assume $d_Ad_P\neq0$. 

	Consider a convex subset $K:=\overline{\mathrm{co}}^{\rm w}\{u d_A u^*\mid u\in \mathcal{U}(P) \} \subset \langle M,B \rangle$ and observe that $K$ is regarded as a subset in $L^2(\langle M, B\rangle, \Tr_{\langle M, B\rangle})$ which is $L^2$-norm bounded (e.g.\ \cite[Exercise F.3]{BO08}). Take the unique minimal $L^2$-norm element $\widetilde{d}$ in $K$. 
We have $u\widetilde{d}u^* = \widetilde{d}$ for any $u\in \mathcal{U}(P)$ by the uniqueness, and hence $\widetilde{d}$ is contained in $P'\cap A' \cap p\langle M, B \rangle p=(A\vee P)'\cap p\langle M, B \rangle p$. Observe that $\widetilde{d}$ is trace finite in $\langle M, B\rangle$ since so is $d_A$ (and $\Tr_{\langle M, B\rangle}$ is normal). Finally $\widetilde{d}$ is non-zero since for any $u\in \mathcal{U}(P)$,
\begin{eqnarray*}
	\langle ud_Au^* , d_P\rangle
	=\mathrm{Tr}_{\langle M, B\rangle}(ud_Au^*d_P)
	=\mathrm{Tr}_{\langle M, B\rangle}(d_Ad_P)>0,
\end{eqnarray*}
and so any $a \in K$ satisfies $\langle a , d_P \rangle =\mathrm{Tr}_{\langle M, B\rangle}(d_Ad_P)>0$. 
Thus we obtain $(A\vee P)\preceq_MB$. 
\end{proof}

\subsection*{\bf Relative amenability}\label{Relative amenability in general von Neumann algebras}

	We next recall relative amenability introduced in \cite{OP07}.

\begin{Def}[{\cite[Definition 2.2]{OP07}}]\upshape
	Let $M$ be a finite von Neumann algebra with trace $\tau$. Let $p\in M$ be a projection and $A\subset pMp$ and $B\subset M$ von Neumann subalgebras. 
We say \textit{$A$ is amenable relative to $B$ in $M$}, and write as $A \lessdot_MB$, if there exists a conditional expectation from $p\langle M,B\rangle p$ onto $A$ which restricts to a $\tau$-preserving expectation on $pMp$.
\end{Def}

\begin{Pro}[{\cite[Proposition 2.4(3)]{OP07}}]\label{relative amenable transitivity}
		Let $B\subset M$ and $A\subset pMp$ as above, and let $N\subset M$ be another von Neumann subalgebra. If $A \lessdot_MB$ and $B \lessdot_MN$, then $A \lessdot_MN$.
\end{Pro}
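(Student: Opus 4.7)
The plan is to reformulate relative amenability in state-theoretic form and then compose the two resulting objects via an intermediate completely positive map. First, from $A \lessdot_M B$, composing the given conditional expectation $E_1 : p\langle M,B\rangle p \to A$ with the trace on $A$ yields an $A$-central state $\psi_1$ on $p\langle M,B\rangle p$ with $\psi_1|_{pMp} = \tau|_{pMp}$; similarly $B \lessdot_M N$ produces a $B$-central state $\psi_2$ on $\langle M,N\rangle$ with $\psi_2|_M = \tau$. The goal is then to produce an $A$-central state $\psi$ on $p\langle M,N\rangle p$ extending $\tau|_{pMp}$, from which the required conditional expectation $p\langle M,N\rangle p \to A$ follows by the standard correspondence between $A$-central trace-preserving states and CEs.

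The core step is to construct a unital completely positive map
\[
\Theta : p\langle M,N\rangle p \longrightarrow p\langle M,B\rangle p
\]
that is $pMp$-bimodular and restricts to the identity on $pMp$. Once $\Theta$ is in hand, $\psi := \psi_1 \circ \Theta$ is automatically a state, it is $A$-central (because $\Theta$ is $A$-bimodular and $\psi_1$ is $A$-central), and it restricts to $\tau|_{pMp}$ since both $\Theta|_{pMp}$ and $\psi_1|_{pMp}$ do. To build $\Theta$, I would realize $\langle M,B\rangle$ concretely on $L^2(M)$ via the Jones projection $e_B$ implementing the trace-preserving CE $E_B : M \to B$, and then use the CE $E_2 : \langle M,N\rangle \to B$ (trace-preserving on $M$) coming from $B \lessdot_M N$. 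A Stinespring dilation of $\psi_2$, or equivalently a Pimsner--Popa basis formula for $M$ over $B$ combined with $E_2$, should produce an $M$-bimodular ucp map $\langle M,N\rangle \to \langle M,B\rangle$ extending the identity on $M$; compressing by $p$ then yields $\Theta$.

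The main obstacle is exactly this construction of $\Theta$. There is no canonical inclusion or morphism between $\langle M,N\rangle$ and $\langle M,B\rangle$, so the bimodular ucp map must be manufactured from the basic-construction data of $B \subset M$ together with $E_2$, and checking the bimodularity plus the identity-on-$M$ property requires careful bookkeeping of the different left and right $L^2$-actions. An alternative route phrases both hypotheses as weak containments of Connes correspondences (e.g.\ $_A pL^2M \prec {}_A pL^2M \otimes_B L^2M$ as $A$-$M$-bimodules, and analogously for $B \lessdot_M N$) and exploits compatibility of Connes' relative tensor product with weak containment to deduce $_A pL^2M \prec {}_A pL^2M \otimes_N L^2M$; this bypasses $\Theta$ but shifts the technical burden to verifying the weak-containment compositions at the correct bimodule level. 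In either approach, the conceptual content is the same: glue the two amenability witnesses through a categorical composition, with the technicalities concentrated in getting the bimodule structures to match.
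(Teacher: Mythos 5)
You should first note that the paper does not prove this statement at all: it is quoted from \cite[Proposition 2.4(3)]{OP07}, so your proposal has to be judged as a proof on its own. Your reduction is sound as far as it goes: if a u.c.p.\ map $\Theta\colon p\langle M,N\rangle p\to p\langle M,B\rangle p$ with $\Theta|_{pMp}=\mathrm{id}$ (hence automatically $pMp$-bimodular, by the multiplicative domain) existed, then $\psi_1\circ\Theta$ would be an $A$-central state extending $\tau|_{pMp}$, and passing from such a state back to a conditional expectation onto $A$ is indeed the standard equivalence from \cite[Theorem 2.1]{OP07}. The genuine gap is that the existence of $\Theta$ is the whole content of the proposition, and the devices you name do not produce it. The only witnesses you have from $B\lessdot_M N$ are the expectation $E_2\colon\langle M,N\rangle\to B$ and the state $\psi_2=\tau\circ E_2$, and $E_2$ restricts to $E_B$ on $M$, \emph{not} to the identity; moreover $E_2$ is in general not normal. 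A Stinespring dilation of $\psi_2$ or $E_2$ therefore only reproduces $E_B$ on $M$ (its canonical cyclic vector has matrix coefficients $\langle E_B(x)\,\cdot,\cdot\rangle$), so it cannot by itself give a map restricting to $\mathrm{id}_{pMp}$; and the Pimsner--Popa formula $\Theta(T)=\sum_{i,j}m_ie_B E_2(m_i^*Tm_j)e_Bm_j^*$, which is the natural candidate, requires an infinite basis and, to check $M$-bimodularity (even well-definedness of the limit), one must interchange the non-normal map $E_2$ with $\|\cdot\|_2$-convergent expansions such as $m_k^*x=\sum_i E_B(m_k^*xm_i)m_i^*$ --- precisely the step that fails without normality. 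So the ``careful bookkeeping'' you defer is not bookkeeping: it is the proof.

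The same gap resurfaces in your alternative bimodule route. Composing the two weak containments via Connes tensor products (itself a nontrivial lemma) lands you in ${}_A\,p(L^2M\otimes_BL^2M\otimes_NL^2M)$, and there is no general (weak) containment of this $A$-$M$ bimodule in multiples of ${}_A\,p(L^2M\otimes_NL^2M)$ --- already for $B=\C$ the left leg contributes a coarse piece --- so one cannot simply ``forget'' the middle $\otimes_B$. What actually closes the argument is to use the vector form of $B\lessdot_M N$ from \cite[Theorem 2.1]{OP07}: a net $\eta_i\in L^2\langle M,N\rangle$ with $\langle x\eta_i,\eta_i\rangle\to\tau(x)$ for $x\in M$ and $\|b\eta_i-\eta_i b\|_2\to0$ for $b\in B$; after truncating to left-$M$-bounded vectors with uniformly controlled bound, the maps $T\mapsto V_i^*TV_i$ with $V_i\widehat{x}:=x\eta_i$ converge (on cluster points) to a u.c.p.\ map whose range commutes with $J_MBJ_M$, i.e.\ lies in $\langle M,B\rangle$, and which is the identity on $M$ --- the approximate $B$-centrality is exactly what pushes the range into $\langle M,B\rangle$, and the truncation/uniform-boundedness step is where the real work (absent from your proposal) sits. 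As written, your argument identifies the right target but does not construct it, so it does not yet constitute a proof.
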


We record the following elementary lemma.

\begin{Lem}\label{relative amenable and amenable}
	Let $M$ and $B$ be finite von Neumann algebras. Then $M\ovt B \lessdot_{M\ovt B} B$ if and only if $M$ is amenable.
\end{Lem}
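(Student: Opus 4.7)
The plan is to reduce both implications to the standard fact that $M$ is amenable if and only if $B(L^2(M))$ admits an $M$-hypertrace (i.e.\ an $M$-central state extending $\tau_M$). The bridge is an identification of the basic construction as a tensor product, and the well-known central-state reformulation of relative amenability.

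First I would make precise the spatial identification
\[
\langle M \ovt B, B \rangle \simeq B(L^2(M)) \ovt B,
\]
where $B$ is embedded in $M\ovt B$ as $\C \otimes B$. This follows from $\langle M \ovt B, B \rangle = J(\C \otimes B)' J$ acting on $L^2(M) \ovt L^2(B)$, combined with the factorization $J = J_M \otimes J_B$ of the modular conjugation and the spatial commutant $(\C \otimes B)' = B(L^2(M)) \ovt B'$. Under this identification $M \ovt B$ sits inside $B(L^2(M)) \ovt B$ in the obvious way.

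Next I would invoke the standard equivalent characterization of relative amenability (see \cite[Theorem 2.1]{OP07}): $M \ovt B \lessdot_{M \ovt B} B$ holds if and only if there exists an $(M \ovt B)$-central state $\Psi$ on $B(L^2(M)) \ovt B$ whose restriction to $M \ovt B$ equals $\tau_M \otimes \tau_B$. This converts both implications into statements about central states.

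For the direction ``$M$ amenable $\Rightarrow$ $M \ovt B \lessdot_{M \ovt B} B$'', I would pick an $M$-hypertrace $\omega$ on $B(L^2(M))$ and set $\Psi := \omega \otimes \tau_B$, which is well defined since $\tau_B$ is a normal state. A one-line check using the $M$-centrality of $\omega$ and the trace property of $\tau_B$ shows
\[
\Psi\bigl((m \otimes b)(x \otimes c)\bigr) = \omega(mx)\,\tau_B(bc) = \omega(xm)\,\tau_B(cb) = \Psi\bigl((x \otimes c)(m \otimes b)\bigr),
\]
so $\Psi$ is $(M \ovt B)$-central with the correct restriction. Conversely, given such a $\Psi$, I would slice and define $\omega(x) := \Psi(x \otimes 1)$ on $B(L^2(M))$. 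Then $\omega|_M = \tau_M$, and for $m \in M$ and $x \in B(L^2(M))$, centrality of $\Psi$ applied to $m \otimes 1$ and $x \otimes 1$ yields $\omega(mx) = \omega(xm)$; hence $\omega$ is an $M$-hypertrace and $M$ is amenable.

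The only mildly non-routine ingredient is the spatial identification of the basic construction as the tensor product $B(L^2(M)) \ovt B$ in the first step; once that is in hand, both directions reduce to a single centrality check and a slicing.
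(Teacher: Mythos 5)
The paper records this lemma without proof (it is offered as an elementary fact), so there is no argument of the paper to compare against; I can only assess your proposal on its own terms. Your overall route is the natural one: the identification $\langle M\ovt B,B\rangle\simeq \B(L^2(M))\ovt B$ is correct and justified as you indicate, invoking the central-state characterization of relative amenability from \cite{OP07} is legitimate, and your converse direction is complete as written, since there you only use centrality of $\Psi$ under the single element $m\otimes 1$ applied to the single element $T=x\otimes 1$, which produces an $M$-hypertrace.

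The forward direction, however, has a genuine gap as written. Your ``one-line check'' verifies $\Psi\bigl((m\otimes b)T\bigr)=\Psi\bigl(T(m\otimes b)\bigr)$ only when $T$ is an elementary tensor and $m\otimes b$ is an elementary tensor, and since $\omega$ (hence $\Psi$) is not normal, you cannot pass from the algebraic tensor product to its weak$^*$ closure $\B(L^2(M))\ovt B$ by density: the algebraic tensor product is not norm dense there, and in general a state that is central under a weak$^*$-dense $*$-subalgebra need not be central under the whole von Neumann algebra (any singular state on $\B(\ell^2)$ is central under $\K(\ell^2)+\C 1$, yet $\B(\ell^2)$ admits no tracial state). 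The fix is standard but must be said. Define $\Psi:=\omega\circ(\id\otimes\tau_B)$, where $\id\otimes\tau_B$ is the \emph{normal} slice map; by normality and weak$^*$ density one has, for \emph{all} $T\in\B(L^2(M))\ovt B$, the identities $(\id\otimes\tau_B)\bigl((m\otimes 1)T\bigr)=m\,(\id\otimes\tau_B)(T)$, $(\id\otimes\tau_B)\bigl(T(m\otimes 1)\bigr)=(\id\otimes\tau_B)(T)\,m$ and $(\id\otimes\tau_B)\bigl((1\otimes b)T\bigr)=(\id\otimes\tau_B)\bigl(T(1\otimes b)\bigr)$, so $M$-centrality of $\omega$ gives $\Psi(aT)=\Psi(Ta)$ for every $a\in M\otimes_{\mathrm{alg}}B$ and every $T$. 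To upgrade from $M\otimes_{\mathrm{alg}}B$ to all of $M\ovt B$ in the $a$-variable, use that $\Psi|_{M\ovt B}=\tau_M\otimes\tau_B$ is normal: Cauchy--Schwarz gives $|\Psi((a-a')T)|\leq \|a-a'\|_{2,\tau}\,\|T\|$ and $|\Psi(T(a-a'))|\leq \|T\|\,\|(a-a')^*\|_{2,\tau}$, so Kaplansky density (a bounded net from $M\otimes_{\mathrm{alg}}B$ converging $*$-strongly to a given unitary of $M\ovt B$) concludes. With these two routine insertions your argument is correct.
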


\section{\bf Some observations on tensor product $\bf\rm II_1$ factors}\label{some observations}

	In this section, we briefly review fundamental properties of tensor product II$_1$ factors to study the property (TFF) and strong primeness. 
	We say $M_1\ovt  \cdots \ovt  M_m=N_1\ovt  \cdots \ovt  N_n$ is a \textit{tensor decomposition as $\rm II_1$ factors} if each $M_i$ and $N_j$ is a $\rm II_1$ factor. 

\subsection*{\bf Property (TFF)}

	We first recall the following observation of Ozawa and Popa. This shows that, to see a unitary embedding on tensor products, we have only to find Popa's conjugacy ``$\preceq$'' introduced in Theorem \ref{Popa embed}. This allows us to reformulate strong primeness (Lemma \ref{lemma}), so that we can make use of results in the deformation/rigidity theory. 

\begin{Lem}[{\cite[\textrm{Proposition 12}]{OP03}}]\label{intertwining tensor}
	Let $M_1\ovt  M_2= N_1\ovt  N_2$ $(=:M)$ be a tensor decomposition as $\rm II_1$ factors. Then $N_1\preceq_M M_1$ if and only if 
there is a unitary element $u\in M$ and a decomposition $M= M_1^t\ovt  M_2^{1/t}$ for some $t>0$  such that $uN_1u^*\subset M_1^t$. 
\end{Lem}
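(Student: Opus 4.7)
The ``if'' direction follows from Lemma \ref{tensor case}: since $M_1^t$ is isomorphic to a corner of an amplification $M_1\ovt\M_n$, a containment $uN_1u^*\subset M_1^t$ translates into $N_1\preceq_M M_1$ via the amplification equivalences in that lemma.

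For the ``only if'' direction, I plan to apply Theorem \ref{Popa embed}$\rm (i)'$ to extract $n\in\N$, a unital normal $*$-homomorphism $\psi\colon N_1\to p(M_1\ovt\M_n)p$ with $p=\psi(1)$, and a nonzero partial isometry $w\in(1\otimes e_{1,1})(M\ovt\M_n)p$ satisfying $(x\otimes e_{1,1})w=w\psi(x)$ for all $x\in N_1$. Direct manipulation of this intertwining identity gives $ww^*\in N_1'\cap M=N_2$ (after identifying $(1\otimes e_{1,1})(M\ovt\M_n)(1\otimes e_{1,1})$ with $M$ and invoking factoriality of $M=N_1\ovt N_2$), and $w^*w\in\mathcal{Q}_1\ovt M_2$ where $\mathcal{Q}_1:=\psi(N_1)'\cap p(M_1\ovt\M_n)p$ (using the decomposition $M\ovt\M_n=(M_1\ovt\M_n)\ovt M_2$ together with $p\in M_1\ovt\M_n$).

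Next I will unitarize $w$ via a Zorn maximality argument: among triples $(n,\psi,w)$ as above, maximize $\tau_{\widetilde M}(ww^*)$ where $\widetilde M:=M\ovt\M_n$. If $ww^*\neq 1\otimes e_{1,1}$, factoriality of $N_2$ combined with $(1-ww^*)N_1\preceq_M M_1$ allows me to patch a new intertwiner for the corner (on a possibly larger matrix amplification) with $w$, strictly increasing the trace of the left support and contradicting maximality. Hence $ww^*=1\otimes e_{1,1}$, so $\Phi:=\Ad(w)\colon P\widetilde MP\to M$ (with $P:=w^*w$) is a $*$-isomorphism satisfying $\Phi\circ\psi=\id_{N_1}$ and $\tau_{\widetilde M}(P)=\tau_{\widetilde M}(ww^*)=1/n$.

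Finally I arrange $P$ to be an elementary tensor. Factoriality of $N_2=\Phi(P(\mathcal{Q}_1\ovt M_2)P)$ forces the central support of $P$ in $\mathcal{Q}_1\ovt M_2$ to be a minimal central projection $z_0\in Z(\mathcal{Q}_1)$, reducing us to the II$_1$ factor $z_0\mathcal{Q}_1\ovt M_2$. Classification of projections in a II$_1$ factor, together with continuous adjustment of $\tau(p_2)$, produces $p_1\in z_0\mathcal{Q}_1$, $p_2\in M_2$, and a partial isometry $v\in\mathcal{Q}_1\ovt M_2\subset\psi(N_1)'$ with $vv^*=P$ and $v^*v=p_1\otimes p_2$. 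Replacing $w$ by $wv$ (which preserves both $ww^*$ and the intertwining, since $v\in\psi(N_1)'$) I may assume $P=p_1\otimes p_2$. Setting $t:=n\tau(p_1)$ and $s:=\tau(p_2)$, the identity $\tau_{\widetilde M}(P)=ts/n=1/n$ forces $ts=1$, so
\[
P\widetilde MP=p_1(M_1\ovt\M_n)p_1\ovt p_2M_2p_2\cong M_1^t\ovt M_2^{1/t}.
\]
Since $p_1\in\psi(N_1)'$, $\psi(N_1)P\subset p_1(M_1\ovt\M_n)p_1\otimes\C$; pushing forward through $\Phi$ yields the required tensor decomposition of $M$ with $N_1=\Phi(\psi(N_1)P)$ sitting inside $\Phi(M_1^t)$. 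The main technical obstacle will be the unitarization step, which demands careful bookkeeping when patching partial isometries across varying matrix amplifications.
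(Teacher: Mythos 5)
This lemma is not proved in the paper at all; it is quoted from \cite[Proposition 12]{OP03}, and your argument is in substance the standard proof of that cited result: extract $(\psi,w)$ from condition $\rm (i)'$ of Theorem \ref{Popa embed}, observe $ww^*\in N_1'\cap M=N_2$ and $w^*w\in(\psi(N_1)'\cap p(M_1\ovt\M_n)p)\ovt M_2$, enlarge the left support to $1\otimes e_{1,1}$ using factoriality of $N_2$, and then replace the right support by an elementary tensor $p_1\otimes p_2$ via a partial isometry commuting with $\psi(N_1)$, so that $\Ad(w)$ itself furnishes the identification $M=M_1^t\ovt M_2^{1/t}$ (this is exactly the partial isometry conjugacy alluded to in the Remark following the lemma, and it subsumes the unitary $u$). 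The one step that needs repair as written is the unitarization: maximizing $\tau_{\widetilde M}(ww^*)$ over triples $(n,\psi,w)$ is ill-posed, since the normalized trace of $M\ovt\M_n$ bounds this quantity by $1/n$, which changes with $n$, and chains of such triples have no evident upper bound for a Zorn argument. The standard fix avoids maximality altogether: measure $e_0:=ww^*$ as a projection of the II$_1$ factor $N_2$ (i.e.\ inside the corner $(1\otimes e_{1,1})\widetilde{M}(1\otimes e_{1,1})\cong M$), choose finitely many partial isometries $v_1,\dots,v_k\in N_2$ with $v_j^*v_j\leq e_0$, the $v_jv_j^*$ pairwise orthogonal and $\sum_j v_jv_j^*=1$, note that each $v_jw$ still intertwines $\psi$ because $v_j\in N_1'\cap M$, and assemble these into a single partial isometry on the further amplification $M\ovt\M_n\ovt\M_k$ (with $\psi$ replaced by $\psi\otimes 1_k$) whose left support is $1\otimes e_{1,1}$; after this replacement the remainder of your argument, including the trace computation giving $ts=1$, goes through verbatim.
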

\begin{Rem}\upshape
	In this lemma, we are having an identification $M= M_1^t\ovt  M_2^{1/t}$, using a non-canonical isomorphism $M_1\ovt M_2 \simeq M_1^t\ovt  M_2^{1/t}$. Since this isomorphism is given at the level of a partial isometry conjugacy of $M_1\ovt M_2 \ovt \M_n$ (for some large $n\in \N$), one can show that $N_1\preceq_M M_1$ if and only if $N_1\preceq_M M_1^t$ for any $t>0$ and any such an identification $M_1\ovt M_2 = M_1^t\ovt  M_2^{1/t}$. 
So we do not need to be careful to identify $M_1\ovt M_2$ with $M_1^t\ovt  M_2^{1/t}$ in the study of Popa's conjugacy.
\end{Rem}

Here we record a simple but very useful lemma on tensor product factors.

\begin{Lem}\label{commutant form}
	Let $M_1\ovt  M_2= N_1\ovt  N_2$ $(=:M)$ be a tensor decomposition as $\rm II_1$ factors and assume that $M_1\subset N_1$. Then $M_1'\cap N_1$ is a factor and satisfies  
$$M_2=(M_1'\cap N_1)\ovt  N_2 \quad \text{and} \quad N_1=M_1\ovt  (M_1'\cap N_1).$$
\end{Lem}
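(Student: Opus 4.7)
The plan is to reduce both decompositions to the standard commutant formula for tensor products: if $Q\subset M_2$ is a von Neumann subalgebra, then $Q'\cap (M_1\ovt M_2)=M_1\ovt (Q'\cap M_2)$, which is a direct consequence of Tomita's commutant theorem. Everything else will be essentially bookkeeping with factors and centers.

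First I would observe that $M_1\subset N_1$ forces $N_2\subset M_2$. Indeed, since $M_1$, $M_2$, $N_1$, $N_2$ are all factors, each tensor component is the commutant (relative to $M$) of the other, so from $M_1\subset N_1$ I get $N_2=N_1'\cap M\subset M_1'\cap M=M_2$.

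Next I would apply the commutant formula to the inclusion $N_2\subset M_2$ inside $M=M_1\ovt M_2$:
\begin{equation*}
N_1 \;=\; N_2'\cap M \;=\; M_1\ovt (N_2'\cap M_2).
\end{equation*}
Intersecting with $M_1'$ and using that $M_1$ is a factor, this already gives
\begin{equation*}
M_1'\cap N_1 \;=\; (M_1'\cap M_1)\ovt (N_2'\cap M_2)\;=\;N_2'\cap M_2,
\end{equation*}
so the identity $N_1 = M_1\ovt (M_1'\cap N_1)$ falls out. Since $N_1$ is a factor and the center of $M_1\ovt (N_2'\cap M_2)$ equals $Z(N_2'\cap M_2)$, this also forces $M_1'\cap N_1=N_2'\cap M_2$ to be a factor.

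Finally, for the decomposition of $M_2$, I would simply substitute $N_1=M_1\ovt (M_1'\cap N_1)$ into $M=N_1\ovt N_2$ to get $M=M_1\ovt \bigl((M_1'\cap N_1)\ovt N_2\bigr)$, and then take the commutant of $M_1$ inside $M$: the left side yields $M_2$ and the right side yields $(M_1'\cap N_1)\ovt N_2$. I do not anticipate any real obstacle; the only step one must be slightly careful with is the commutant formula for the inclusion $N_2\subset M_2$ inside $M_1\ovt M_2$, but since we are in the finite (tracial) setting with $M_2$ in standard form, this is immediate.
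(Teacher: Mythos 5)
Your proof is correct and rests on exactly the same tool as the paper's: the relative commutant formula $Q'\cap(P_1\ovt P_2)=P_1\ovt(Q'\cap P_2)$ coming from the commutation theorem, combined with identifying each tensor component as the relative commutant of the other. The only cosmetic difference is the order: you first flip the inclusion to $N_2\subset M_2$ and compute $N_1=N_2'\cap M$, whereas the paper computes $M_2=M_1'\cap M=(M_1'\cap N_1)\ovt N_2$ directly and reads off both identities and factoriality from that single line.
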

\begin{proof}
	Since $M_1\subset N_1$, we have 
$$M_2=M_1'\cap M=M_1'\cap (N_1\ovt N_2)=(M_1'\cap N_1)\ovt  N_2.$$ 
So $M_1'\cap N_1$ is a factor. We have $M=M_1\ovt  (M_1'\cap N_1)\ovt  N_2$ and hence $N_1=N_2'\cap M=M_1\ovt  (M_1'\cap N_1)$.
\end{proof}

	The following lemma is a key observation in this paper, which states a sufficient condition to the property (TFF) in terms of Popa's conjugacy. Although its proof is easy, this lemma plays significant roles in our study.

\begin{Lem}\label{tensor formula}
	Let $M$ be a prime $\rm II_1$ factor satisfying the following condition.
\begin{itemize}
	\item For any $\rm II_1$ factor $B$ and any $t>0$ such that $M \ovt B \simeq M \ovt B^t \ (=:K\ovt L)$, under the identification $M \ovt B=K\ovt L$, we have either 
	$$K\preceq_{M \ovt B} B, \quad L\preceq_{M \ovt B} B, \quad M \preceq_{M \ovt B}L,\quad \text{or}\quad B\preceq_{M \ovt B}L.$$
\end{itemize}
Then $M$ has the property (TFF).
\end{Lem}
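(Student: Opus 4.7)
The plan is to establish the nontrivial inclusion $\mathcal{F}(M\ovt B)\subseteq \mathcal{F}(M)\mathcal{F}(B)$ for an arbitrary $\rm II_1$ factor $B$. First I would fix $t\in\mathcal{F}(M\ovt B)$; by the amplification formula, $M\ovt B\simeq (M\ovt B)^t\simeq M\ovt B^t$, so the hypothesis of the lemma is available with this particular $B$ and $t$. Setting $N:=M\ovt B$, I would view $N$ as carrying two tensor decompositions at once: the original $N=M\ovt B$, and a second $N=K\ovt L$ with $K\simeq M$ and $L\simeq B^t$. The hypothesis then provides one of four $\preceq$-intertwinings in $N$, and the task is to extract $t\in\mathcal{F}(M)\mathcal{F}(B)$ in each case.

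The workhorse is to combine Lemma \ref{intertwining tensor}, which converts a $\preceq$-intertwining between tensor factors into an honest unitary conjugacy onto an amplified tensor factor, with Lemma \ref{commutant form}, which identifies the complementary tensor factor after such a conjugacy. In the two ``mixed'' cases $K\preceq_N B$ and $M\preceq_N L$, applying this pair of lemmas produces some $\alpha>0$ and a factor $C$ together with identifications of the form $B^\alpha\simeq M\ovt C$ and $B^t\simeq C\ovt M^{1/\alpha}$ (or their mirror image in the second case). A short bookkeeping check using $(X\ovt Y)^r\simeq X^r\ovt Y\simeq X\ovt Y^r$ then collapses these identifications to $t\in\mathcal{F}(M\ovt C)=\mathcal{F}(B^{t\alpha})=\mathcal{F}(B)$, so that $t=1\cdot t\in\mathcal{F}(M)\mathcal{F}(B)$. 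Primeness of $M$ plays no role here.

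In the two ``parallel'' cases $L\preceq_N B$ and $B\preceq_N L$, the same procedure instead yields an identification of the form $M\simeq C'\ovt M^{1/\alpha}$ for some factor $C'$. This is where I would invoke primeness of $M$: since $M^{1/\alpha}$ is a $\rm II_1$ factor, the other tensor factor $C'$ must be of type I, say $C'\simeq \M_n$. Unwinding the resulting amplifications gives $n/\alpha\in\mathcal{F}(M)$ from this side and $tn/\alpha\in\mathcal{F}(B)$ from the dual identification $B^\alpha\simeq B^t\ovt C'\simeq B^{tn}$ (or its mirror), so that $t=(\alpha/n)(tn/\alpha)\in\mathcal{F}(M)\mathcal{F}(B)$.

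The main obstacle is not conceptual but notational: one has to keep careful track of which tensor component is amplified by which exponent at each step, and verify that the two isomorphisms produced by Lemma \ref{commutant form} are compatible under the amplification calculus. Once that bookkeeping is in place, primeness is the single substantive input, used precisely in the two cases where a tensor factor of one decomposition embeds into the ``same type'' of tensor factor of the other.
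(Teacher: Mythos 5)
Your proposal is correct and follows essentially the same route as the paper: fix $t\in\mathcal{F}(M\ovt B)$, use Lemma \ref{intertwining tensor} together with Lemma \ref{commutant form} to turn the given $\preceq$-condition into tensor identifications, invoke primeness of $M$ exactly in the cases $L\preceq_N B$ and $B\preceq_N L$ to force the relative commutant to be a matrix algebra, and conclude by the amplification bookkeeping. The only cosmetic difference is that the paper dispatches the cases $M\preceq_N L$ and $B\preceq_N L$ by a formal swap of the two decompositions (yielding $1/t$ instead of $t$), whereas you treat them directly as mirror images; the substance is identical.
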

\begin{proof}
	Fix a $\rm II_1$ factor $B$ and take $t\in \mathcal{F}(M \ovt B)$. We will show $t \in \mathcal{F}(M)\mathcal{F}(B)$. Fix an isomorphism $M \ovt  B \simeq (M \ovt  B)^t\simeq M \ovt B^t$ $(=:K\ovt  L)$. By assumption, regarding $N:=M \ovt B^t=K\ovt L$, we have either 
	$$K\preceq_N B, \quad L\preceq_N B, \quad M \preceq_NL,\quad \text{or}\quad B\preceq_NL.$$

	If $L\preceq_NB$, then by Lemma \ref{intertwining tensor} there exists $s>0$ and $u\in\mathcal{U}(N)$ such that $uLu^*\subset B^s$ under the isomorphism $M\ovt B = M^{1/s} \ovt B^s$. For simplicity we assume $u=1$. Then by Lemma \ref{commutant form}, 
 putting $P :=  L' \cap B^s$, it holds that 
	$$B^s = L \ovt P \quad \text{and} \quad K =  P \ovt M^{1/s}.$$
Since $K(=M)$ is prime, $P$ is finite dimensional. Write $P=\M_n$ for some $n\in\N$ and we obtain
	$$B^s = L^n  \quad \text{and} \quad K = M^{n/s}.$$
Since $L^n = B^{tn}$ and $K=M$, this implies that $s/tn \in\mathcal{F}(B)$ and $n/s \in\mathcal{F}(M)$, and hence 
	$$\frac{1}{t} = \frac{s}{tn} \cdot \frac{n}{s} \in  \mathcal{F}(B)\, \mathcal{F}(M).$$
Thus $t \in \mathcal{F}(B)\, \mathcal{F}(M)$.

	Next assume $K\preceq_NB$. Then by the same reasoning as above, there exists $s>0$ and $u\in \mathcal{U}(N)$ such that $uKu^* \subset B^s$. We assume $u=1$. Putting $Q :=  K' \cap B^s$ it holds that 
	$$B^s = K \ovt Q \quad \text{and} \quad L =  Q \ovt M^{1/s}.$$
Since $K=M$ and $L=B^t$, these equations imply
	$$ B^s = M \ovt Q \quad \text{and} \quad B^t = Q \ovt M^{1/s},$$
and hence
	$$B^s = M \ovt Q \simeq Q\ovt M = B^{ts}.$$
This implies $t \in \mathcal{F}(B)$ and we obtain the conclusion.

	Finally assume that $M \preceq_NL$ or $B \preceq_NL$. Then since $K=M$ and $L=B^t$, if we put $\widetilde{B}:=B^t$, $\widetilde{t}:=1/t$, $\widetilde{K}:=M$, $\widetilde{L}:=\widetilde{B}^{\widetilde{t}}$, and $\widetilde{K}\ovt\widetilde{L}=M \ovt \widetilde{B}$, we can apply exactly the same argument as in the previous two cases, and obtain $\widetilde{t} = 1/t \in \mathcal{F}(M) \mathcal{F}(\widetilde{B})$. Since $\mathcal{F}(\widetilde{B})=\mathcal{F}(B^t)=\mathcal{F}(B)$, we obtain the conclusion.
\end{proof}

\subsection*{\bf Strong primeness}

	We study fundamental properties on strong primeness. We first give a reformulation of strong primeness in terms of Popa's conjugacy.

\begin{Lem}\label{lemma}
	A $\rm II_1$ factor $M$ is strongly prime if and only if for any tensor decomposition $M\ovt B=K\ovt  L$ as $\rm II_1$ factors, we have either $K\preceq B$ or $L\preceq B$.
\end{Lem}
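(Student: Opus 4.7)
The plan is to reformulate strong primeness in terms of Popa's intertwining (Lemma~\ref{intertwining tensor}) and to exploit commutant duality (Lemma~\ref{commutant form}) inside a tensor decomposition $M\ovt B = K\ovt L$ of $\rm II_1$ factors. The core observation is the pair of equivalences
\begin{equation*}
M \preceq_{M\ovt B} K \;\Longleftrightarrow\; L \preceq_{M\ovt B} B \quad\text{and}\quad M\preceq_{M\ovt B} L \;\Longleftrightarrow\; K\preceq_{M\ovt B} B.
\end{equation*}
Given these, strong primeness of $M$ -- which, by Lemma~\ref{intertwining tensor}, is precisely the condition ``$M\preceq K$ or $M\preceq L$ for every tensor decomposition $M\ovt B=K\ovt L$'' -- matches term-for-term the condition in the statement of the lemma.

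To verify one direction of the first equivalence, assume $M\preceq_{M\ovt B} K$. Lemma~\ref{intertwining tensor} produces $t>0$ and $u\in\mathcal{U}(M\ovt B)$ such that under the identification $M\ovt B = K^t\ovt L^{1/t}$ we have $uMu^*\subset K^t$. Applying Lemma~\ref{commutant form} to this inclusion inside the two tensor decompositions $M\ovt B = uMu^*\ovt uBu^* = K^t\ovt L^{1/t}$ yields $uBu^* = P\ovt L^{1/t}$ for the factor $P:=(uMu^*)'\cap K^t$, so in particular $L^{1/t}\subset uBu^*$. Combined with the standard amplification invariance of Popa's conjugacy -- on the target side, the content of the remark after Lemma~\ref{intertwining tensor}; on the source side, an analogous standard fact proved via Theorem~\ref{Popa embed}(i) applied to the projection realizing the amplification $L^{1/t}\simeq p_0 L p_0$ when $1/t\leq 1$, or directly from $L\hookrightarrow L^{1/t}$ when $1/t\geq 1$ -- this upgrades to $L\preceq_{M\ovt B}B$. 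The reverse direction of the first equivalence is symmetric: start from $L\preceq B$, use Lemma~\ref{intertwining tensor} to obtain $vLv^*\subset B^s$ under $M\ovt B = M^{1/s}\ovt B^s$, and run the dual commutant computation to produce $M^{1/s}\subset vKv^*$. The second pair of equivalences follows by swapping $K$ and $L$.

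The principal technical annoyance is the source-side amplification-invariance step, which nevertheless reduces to a routine case-check. The remark following Lemma~\ref{intertwining tensor} is the conceptual tool that makes the book-keeping of the distinct identifications $M\ovt B = K^t\ovt L^{1/t}$ and $M\ovt B = M^{1/s}\ovt B^s$ appearing in the two directions harmless, so that the proof is essentially just the commutant symmetry plus one application of Lemma~\ref{intertwining tensor} in each direction.
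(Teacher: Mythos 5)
Your argument is correct, but it takes a more self-contained route than the paper: the paper disposes of Lemma \ref{lemma} in one line, combining Lemma \ref{intertwining tensor} with the duality of \cite[Lemma 3.5]{Va08}, and that cited lemma is precisely your pair of equivalences (for two decompositions $M_1\ovt M_2=N_1\ovt N_2$ one has $M_1\preceq N_1$ iff $N_2\preceq M_2$). What you do, in effect, is reprove that duality from the paper's own tools: from $uMu^*\subset K^t$ you read off $uBu^*=P\ovt L^{1/t}$ via Lemma \ref{commutant form}, hence $L^{1/t}\preceq_{M\ovt B}B$, and symmetrically for the converse; this buys a proof that uses nothing beyond Lemmas \ref{intertwining tensor} and \ref{commutant form} plus standard stability properties of $\preceq$, at the cost of the source-side amplification step, which the citation of \cite[Lemma 3.5]{Va08} lets the paper skip entirely. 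One caveat on that step: your justification via ``the projection realizing $L^{1/t}\simeq p_0Lp_0$'' is slightly off as stated, since $L$ and $L^{1/t}$ are not literally corner-related inside the common ambient algebra $M\ovt B$; they are related only through the partial isometry conjugacy at the level of $(M\ovt B)\ovt\M_n$ implementing the identification $K\ovt L=K^t\ovt L^{1/t}$. The correct bookkeeping is the same as in the Remark following Lemma \ref{intertwining tensor}, applied to the source rather than the target side: transfer the statement through Theorem \ref{Popa embed}${\rm (i)'}$ and use the stability of $\preceq$ under corners of factors and under tensoring with matrix algebras (Lemma \ref{tensor case}). This is standard and unproblematic, so there is no genuine gap, but it is exactly the technical point your sketch compresses.
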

\begin{proof}
	Use Lemma \ref{intertwining tensor} and \cite[\textrm{Lemma 3.5}]{Va08}.
\end{proof}

	We deduce primeness from strong primeness. This is not entirely trivial since, in the definition of strong primeness, we mention only a decomposition as $\rm II_1$ factors. 

\begin{Pro}\label{s-prime to TF}
	Strong primeness implies primeness. In particular any strongly prime $\rm II_1$ factor satisfies the property (TFF).
\end{Pro}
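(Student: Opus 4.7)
The proposition contains two claims. The second one (that strong primeness implies property (TFF)) will follow almost immediately from the first one, together with results already in the excerpt. Once primeness of $M$ is known, Lemma \ref{tensor formula} asks for one of four intertwinings to hold in every decomposition $M \ovt B = K \ovt L$ coming from an isomorphism $M \ovt B \simeq M \ovt B^t$; but Lemma \ref{lemma} (which is the reformulation of strong primeness used throughout) already asserts that $K \preceq_{M \ovt B} B$ or $L \preceq_{M \ovt B} B$ in every $\rm II_1$-factor decomposition of $M \ovt B$. So the real content of the proposition is the implication that strong primeness implies primeness.

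I plan to prove primeness by contraposition. Suppose $M$ is not prime, i.e.\ $M = M_1 \ovt M_2$ with $M_1$ and $M_2$ both diffuse $\rm II_1$ factors. One cannot apply the definition of strong primeness directly to this decomposition, since strong primeness only speaks of decompositions $M \ovt B = K \ovt L$ in which \textit{all} four factors are $\rm II_1$. The fix is to pad: pick any auxiliary $\rm II_1$ factor $B$ (say the hyperfinite one $R$) and rewrite
\[
M \ovt B = M_1 \ovt (M_2 \ovt B) =: K \ovt L,
\]
a tensor decomposition into $\rm II_1$ factors. Lemma \ref{lemma} then forces either $K = M_1 \preceq_{M \ovt B} B$ or $L = M_2 \ovt B \preceq_{M \ovt B} B$.

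I will rule out both alternatives using Theorem \ref{Popa embed}(ii). Since $M_i$ is diffuse, choose a sequence $w_n \in \mathcal{U}(M_i)$ with $w_n \to 0$ weakly. Tested on elementary tensors $a = a_1 \otimes a_2 \otimes a_3$ and $b = b_1 \otimes b_2 \otimes b_3$ in $M_1 \ovt M_2 \ovt B$, the expectation $E_B$ factorises as a product of two scalar traces on $M_1$ and $M_2$ times the $B$-component, and exactly one of those traces has the form $\tau(\,\cdot\, w_n \,\cdot\,)$ and tends to zero; a standard density and contractivity argument then upgrades this to $\|E_B(b^* w_n a)\|_2 \to 0$ for all $a, b \in M \ovt B$. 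This contradicts Lemma \ref{lemma}, and hence contradicts strong primeness of $M$. Therefore $M$ is prime, and property (TFF) follows as explained in the first paragraph.

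I do not anticipate any serious obstacle; the key insight is the padding trick in the second paragraph, which converts a would-be decomposition of $M$ into a decomposition of $M \ovt B$ of the type that strong primeness can detect, after which everything boils down to an elementary instance of Popa's intertwining criterion applied to the diffuse subalgebras $M_1$ and $M_2$.
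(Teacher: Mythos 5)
Your argument is correct and follows essentially the same route as the paper: the paper's proof uses exactly the same padding decomposition $M\ovt B = M_1\ovt (M_2\ovt B)$ for an auxiliary $\rm II_1$ factor $B$, and then invokes Lemmas \ref{tensor formula} and \ref{lemma} for the (TFF) assertion, just as you do. The only cosmetic difference is that the paper contradicts the conditions $M\preceq_{M\ovt B}M_1$ or $M\preceq_{M\ovt B}M_2\ovt B$ coming from the definition of strong primeness via Lemma \ref{tensor case}, whereas you contradict $M_1\preceq_{M\ovt B}B$ or $M_2\ovt B\preceq_{M\ovt B}B$ coming from Lemma \ref{lemma} by a direct application of Theorem \ref{Popa embed}(ii) with weakly null unitaries; both reductions amount to the diffuseness of the $M_i$.
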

\begin{proof}
	Let $M$ be a non-prime $\rm II_1$ factor with a decomposition $M=M_1\ovt  M_2$ as $\rm II_1$ factors. Fix any $\rm II_1$ factor $B$ and put $K:=M_1$, $L:=M_2\ovt  B$, and $N:=M\ovt  B=K\ovt  L$. 
Then if $M$ is strongly prime, we have either $M\preceq_NK$ or $M\preceq_NL$. 
By Lemma \ref{tensor case}, the first one is equivalent to $M_2\preceq_{M_2}\mathbb{C}$ and the second one is to $M_1\preceq_{M_1}\mathbb{C}$. Thus in each case, we get a contradiction. 
Use Lemmas \ref{tensor formula} and \ref{lemma} for the second assertion. 
\end{proof}

	We observe the difference between the two notions of primeness discussed above. This follows from \cite[Theorem B]{Ho15}.

\begin{Pro}\label{prime not to s-prime}
	Any strongly prime $\rm II_1$ factor is full. In particular there is a prime $\rm II_1$ factor, which is not strongly prime.
\end{Pro}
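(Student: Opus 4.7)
The plan is to establish the two assertions in turn. For the primary claim that strong primeness implies fullness, I would argue by contraposition: suppose $M$ is a $\rm II_1$ factor that fails to be full, i.e.\ $M$ has property Gamma, so that $M'\cap M^\omega\neq\C$. By Lemma \ref{lemma}, it suffices to exhibit a $\rm II_1$ factor $B$ together with a tensor decomposition $M\ovt B=K\ovt L$ as $\rm II_1$ factors such that $K\not\preceq_{M\ovt B}B$ and $L\not\preceq_{M\ovt B}B$.

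To construct such a witness, the natural strategy is to exploit the non-trivial asymptotically central sequences of unitaries $(u_n)\subset\mathcal{U}(M)$ with $\tau(u_n)=0$. For a carefully chosen $\rm II_1$ factor $B$ (a McDuff factor such as the hyperfinite $R$ is a natural candidate, since it supplies an ample supply of central sequences on its side as well), one can combine the central sequences of $M$ and of $B$ inside $M\ovt B$ to produce a new decomposition $K\ovt L$ in which both factors lie ``diagonally'' across the original tensor coordinates. The main obstacle is verifying the non-intertwinability conditions against the single tensor component $B$: one must show that Popa's condition (ii) in Theorem \ref{Popa embed} is met, i.e.\ that inside $K$ (resp.\ $L$) one can find sequences of unitaries whose conditional expectation onto $\C\otimes B$ decays in $\|\cdot\|_2$, uniformly tested against finite subsets. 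The key point is that because property Gamma deforms any candidate intertwiner toward $B$ through asymptotically central unitaries, the new twisted components genuinely mix the two coordinates and neither can embed into $B$. Constructions of this type, in the setting of tensor product and free product $\rm II_1$ factors, appear in \cite{Ho15}.

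For the second assertion, I would invoke \cite{Ho15} Theorem B, which constructs a prime $\rm II_1$ factor with property Gamma (hence not full). By the first part of the proposition, such a factor cannot be strongly prime, so it serves as the desired example of a prime $\rm II_1$ factor that fails to be strongly prime. Together with Proposition \ref{s-prime to TF}, which showed strong primeness implies primeness, this confirms that the two notions are genuinely distinct.
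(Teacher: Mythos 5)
Your overall strategy is the paper's: argue by contraposition via Lemma \ref{lemma}, exhibit for a non-full $M$ some $B$ and a decomposition $M\ovt B=K\ovt L$ with $K\not\preceq_{M\ovt B}B$ and $L\not\preceq_{M\ovt B}B$, and then produce a prime non-full factor. But as written the first part has a genuine gap: the existence of such a ``twisted'' decomposition is exactly the hard content, and your central-sequence sketch does not deliver it. The assertion that ``property Gamma deforms any candidate intertwiner toward $B$'' so that ``neither component can embed into $B$'' is precisely the statement that needs proof, and nothing in your outline rules out Popa intertwining of $K$ or $L$ into $B$; note also that diffuseness or ``mixing of coordinates'' of a subalgebra by itself never prevents $\preceq_{M\ovt B}B$ when $B$ is a II$_1$ factor, so condition (ii) of Theorem \ref{Popa embed} has to be verified by an actual construction. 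The paper does not construct anything by hand: it quotes \cite[Theorem B]{Ho15}, which for two non-full II$_1$ factors $M$ and $B$ provides an automorphism $\phi$ of $M\ovt B$ with $\phi(M)\not\preceq_{M\ovt B}B$ and $\phi(B)\not\preceq_{M\ovt B}B$, and then uses the decomposition $M\ovt B=\phi(M)\ovt\phi(B)$. If you intend to lean on \cite{Ho15}, you must invoke that precise statement (your choice $B=R$ is then admissible, since $R$ is non-full); your proposal instead gestures at ``constructions of this type'' while presenting an argument that is not one.

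The second part also misattributes the needed example: Theorem B of \cite{Ho15} is the automorphism result above, not a construction of a prime II$_1$ factor with property Gamma, so as cited it does not supply the example. The paper's route is concrete: take a free, ergodic, p.m.p.\ action $\F_2\curvearrowright X$ that is not strongly ergodic; then $L^\infty(X)\rtimes\F_2$ is prime by \cite[Theorem 4.6]{Oz04} and is not full, hence by the first part it is prime but not strongly prime. Your plan becomes correct once the first part is secured by the precise citation and the example is justified by such a reference (prime factors with property Gamma do exist in the literature, e.g.\ via Ozawa's theorem as above), but both of these justifications are currently missing.
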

\begin{proof}
	Let $M$ and $B$ be non-full $\rm II_1$ factors. Then by \cite[Theorem B]{Ho15}, there is an automorphism $\phi$ on $M\ovt B$ such that $\phi(M)\not\preceq_{M\ovt B} B$ and $\phi(B)\not\preceq_{M\ovt B} B$. Thus the decomposition $M\ovt B = \phi(M)\ovt \phi(B)$ shows that $M$ is not strongly prime.

	 Let $\F_2 \curvearrowright X$ be a free, ergodic, and measure preserving action of the free group on a standard probability space. Assume that it is not strongly ergodic. Then the crossed product $M:=L^\infty(X)\rtimes \F_2$ is a prime $\rm II_1$ factor by \cite[Theorem 4.6]{Oz04}, and is not strongly prime since it is not full. 
\end{proof}

\section{\bf Proof of Proposition \ref{prime factorization}}

	We study a \textit{unique prime factorization} phenomena, by using our strong primeness. This was already mentioned in our previous paper \cite[Corollary 5.1.3]{Is14}, that shows strongly prime factors behave like prime numbers with respect to von Neumann algebra tensor products. 
We only discussed the case of tensor products with finitely many strongly prime factors. So in this paper, we study the case of infinite tensor products. 

	 We start with several lemmas.

\begin{Lem}\label{lemma1}
	Let $M$ be a strongly prime $\rm II_1$ factor and let $M\ovt  B=N_1\ovt \cdots\ovt  N_n$ $(=:N)$ be a tensor decomposition as $\rm II_1$ factors with $n\geq 2$. 
Then there is $i$ such that $M\preceq_NN_i$.
\end{Lem}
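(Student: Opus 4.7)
The plan is to proceed by induction on $n \geq 2$, using the reformulation of strong primeness supplied by Lemma \ref{lemma}: every tensor decomposition $M \ovt B = K \ovt L$ into $\rm II_1$ factors satisfies $K \preceq_{M \ovt B} B$ or $L \preceq_{M \ovt B} B$.

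For the base case $n=2$, I apply Lemma \ref{lemma} to $M \ovt B = N_1 \ovt N_2$; by symmetry we may assume $N_1 \preceq_N B$. Lemma \ref{intertwining tensor} provides $s > 0$ and, after absorbing a unitary, an identification $N = M^{1/s} \ovt B^s$ with $N_1 \subset B^s$. Lemma \ref{commutant form}, applied to the two tensor decompositions $N = N_1 \ovt N_2 = B^s \ovt M^{1/s}$ with $N_1 \subset B^s$, then yields $N_2 = (N_1' \cap B^s) \ovt M^{1/s}$. Thus $M^{1/s}$ sits inside $N_2$ as a tensor factor, and Lemma \ref{intertwining tensor} delivers $M \preceq_N N_2$.

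For the inductive step $n \geq 3$, I apply Lemma \ref{lemma} to the grouping $M \ovt B = (N_1 \ovt \cdots \ovt N_{n-1}) \ovt N_n$. If $N_1 \ovt \cdots \ovt N_{n-1} \preceq_N B$, then exactly the base-case argument (with the grouped factor playing the role of $N_1$) gives $M \preceq_N N_n$. Otherwise $N_n \preceq_N B$, and the same use of Lemmas \ref{intertwining tensor} and \ref{commutant form} produces a tensor decomposition $N_1 \ovt \cdots \ovt N_{n-1} = M^{1/s} \ovt Q$ for some $s > 0$ and some factor $Q$. I want to feed this into the induction hypothesis: $M^{1/s}$ inherits strong primeness from $M$ (immediate from Lemma \ref{lemma} together with standard amplification invariance of $\preceq$), and $Q$ must be of type $\rm II_1$, because if $Q$ were finite-dimensional then $N_1 \ovt \cdots \ovt N_{n-1}$ would be an amplification of the prime factor $M$ (prime by Proposition \ref{s-prime to TF}), contradicting its non-trivial decomposition into $n-1 \geq 2$ $\rm II_1$ factors. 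Induction then furnishes $i \in \{1,\ldots, n-1\}$ with $M^{1/s} \preceq_{N_1 \ovt \cdots \ovt N_{n-1}} N_i$, whence $M^{1/s} \preceq_N N_i$ by passing to a larger ambient algebra, and finally $M \preceq_N N_i$ by amplification invariance of $\preceq$ (Lemma \ref{intertwining tensor} and its remark).

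The main obstacle is the amplification bookkeeping: each application of Lemma \ref{commutant form} produces $M^{1/s}$ in place of $M$, so I must repeatedly transfer intertwinings between $M$ and its amplifications and verify that both strong primeness and $\preceq$ pass through amplification and through enlargement of the ambient algebra. A secondary subtlety is excluding the case in which the auxiliary factor $Q$ is finite-dimensional, which is precisely where primeness (and hence Proposition \ref{s-prime to TF}) is needed.
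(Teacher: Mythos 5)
Your proof is correct, and the overall induction is the same as the paper's: split off one tensor factor, use intertwining together with the commutant formula (Lemma \ref{commutant form}) to produce a smaller tensor decomposition, use primeness (Proposition \ref{s-prime to TF}) to rule out a finite-dimensional relative commutant, and invoke the induction hypothesis. The genuine difference is which formulation of strong primeness you apply at each stage. You work with the reformulation of Lemma \ref{lemma} ($K\preceq B$ or $L\preceq B$) and conjugate the $N$-side factors into $B$, so that after Lemma \ref{commutant form} the strongly prime factor reappears only as the amplification $M^{1/s}$; this forces you to know that strong primeness passes to amplifications, that $M^{1/s}\preceq_N N_i$ implies $M\preceq_N N_i$, and that $\preceq$ survives enlarging the ambient algebra. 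These facts are true and standard (the first follows from Lemma \ref{lemma} plus stability of intertwining under cutting and amplifying the ambient algebra, the second from the fact that for a factor $A$ one has $A\preceq_N C$ if and only if $pAp\preceq_N C$ for a nonzero projection $p\in A$, the third directly from condition (i) of Theorem \ref{Popa embed}), but they are not quite ``immediate'' and are not recorded in the paper, so if you keep this route you should prove them rather than assert them. The paper instead applies the original definition of strong primeness (via Lemma \ref{intertwining tensor}) to the grouping $N=N_1\ovt(N_1'\cap N)$, obtaining $M\preceq_N N_1$ or $M\preceq_N N_1'\cap N$, and then conjugates $M$ rather than the $N_j$'s; the inductive step is thus applied to the honest unitary copy $uMu^*$ of $M$ (with auxiliary factor $P=(uMu^*)'\cap(N_1'\cap N)^t$), and the only amplification lands on an $N_j$, where the Remark after Lemma \ref{intertwining tensor} disposes of it. So the paper's arrangement avoids any need for amplification-stability of strong primeness, while yours buys a uniform use of Lemma \ref{lemma} at the cost of the extra bookkeeping you correctly flagged.
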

\begin{proof}
	We prove it by induction on $n$. The case $n=2$ is obvious by the definition of strong primeness. So assuming $n-1 \geq 2$ is proven, we show the case $n$ holds. 

	Put $N_1':=N_1'\cap N$. Then since $M\ovt  B=N_1\ovt  N_1'$, we have either $M\preceq_NN_1$ or $M\preceq_N N_1'$. Since $M\preceq_N N_1$ implies the conclusion, we may assume $M\preceq_NN_1'$. By Lemma \ref{intertwining tensor} we find $u\in\mathcal{U}(N)$ and $t>0$ such that $uMu^*\subset (N_1')^t$. Then by Lemma \ref{commutant form} we have $uMu^*\ovt  P= (N_1')^t$, where $P=(uMu^*)'\cap (N_1')^t$. 
Observe that $P$ is a $\rm II_1$ factor. In fact, if $P$ is finite dimensional, then  because $M$ is prime, $n$ must be 2 which contradicts our assumption. 

	Now we can apply strong primeness of $M$ and the assumption on the induction to the decomposition $uMu^* \ovt P = N_2 \ovt \cdots \ovt N_n$ and get that $u M u^* \preceq_{uMu^* \ovt P} N_i$ for some $i \geq 2$. Then take $\theta, p,q,v$ as in Theorem \ref{Popa embed}(i), and observe that $\theta \circ \Ad u, u^*pu ,q, u^*v$ gives the condition $M \preceq_N N_i$. Thus we get the conclusion.
\end{proof}

\begin{Lem}\label{lemma2}
	Let $M\ovt  B=N_1\ovt \cdots\ovt  N_n$ $(=:N)$ be a tensor decomposition as $\rm II_1$ factors with $n\geq 2$. If $M\preceq_NN_i$ and $M\preceq_NN_j$, then $i=j$.
\end{Lem}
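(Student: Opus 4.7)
The plan is to derive a contradiction by assuming $i\ne j$. Set $Q_i:=\ovt_{k\ne i}N_k$, so that $N=M\ovt B=N_i\ovt Q_i$ is a tensor decomposition as $\rm II_1$ factors. Applying Lemma \ref{intertwining tensor} to the first hypothesis $M\preceq_N N_i$ produces $t>0$ and a unitary $u\in N$ such that, under the identification $N=N_i^t\ovt Q_i^{1/t}$, one has $uMu^*\subset N_i^t$; in particular $uMu^*$ commutes with $Q_i^{1/t}$.

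Next I process the second hypothesis $M\preceq_N N_j$. Since $j\ne i$ we have $N_j\subset Q_i$, and condition (i)$'$ of Theorem \ref{Popa embed} is monotone in the right-hand algebra (an intertwiner into $N_j\ovt\M_n$ is \emph{a fortiori} an intertwiner into $Q_i\ovt\M_n$), so $M\preceq_N Q_i$. Applying the Remark following Lemma \ref{intertwining tensor} to the second tensor coordinate of the identification $N=N_i\ovt Q_i=N_i^t\ovt Q_i^{1/t}$, this is equivalent to $M\preceq_N Q_i^{1/t}$, and hence to $uMu^*\preceq_N Q_i^{1/t}$.

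The heart of the argument is to contradict this last statement directly by verifying condition (ii) of Theorem \ref{Popa embed}. Since $uMu^*$ is a $\rm II_1$ factor, pick a sequence of unitaries $u_k\in uMu^*$ with $u_k\to 0$ weakly; note that $u_k\in N_i^t$. For elementary tensors $a=a_1\otimes a_2$ and $b=b_1\otimes b_2$ in $N=N_i^t\ovt Q_i^{1/t}$, a direct computation using that $u_k$ lies in the first tensor factor gives
\begin{equation*}
E_{Q_i^{1/t}}(b^*u_k a)=\tau(b_1^*u_k a_1)\,(b_2^*a_2),
\end{equation*}
and the scalar $\tau(b_1^*u_k a_1)=\tau(a_1 b_1^*u_k)\to 0$ by weak convergence of $u_k$. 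Hence $\|E_{Q_i^{1/t}}(b^*u_k a)\|_2\to 0$ for all elementary tensors, and a routine $\varepsilon/3$ density argument (using $\|u_k\|\le 1$ and $\|\,\cdot\,\|_2$-density of elementary tensors in $N$) extends this to arbitrary $a,b\in N$, violating condition (ii) of Theorem \ref{Popa embed} and producing the desired contradiction.

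The only mildly delicate point is bookkeeping with the two simultaneous tensor decompositions $N=N_i\ovt Q_i=N_i^t\ovt Q_i^{1/t}$, and in particular invoking the Remark after Lemma \ref{intertwining tensor} for the second tensor factor; since the two factors play entirely symmetric roles, this causes no real difficulty.
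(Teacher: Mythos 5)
Your argument is correct and follows essentially the same route as the paper: unitarily conjugate $M$ into $N_i^t$ via Lemma \ref{intertwining tensor}, then contradict the second embedding using that the diffuse algebra $uMu^*$ lies in the tensor factor complementary to $Q_i^{1/t}$. The only cosmetic difference is that the paper invokes Lemma \ref{tensor case} to reduce to $uMu^*\preceq \C$ inside $N\cap (Q_i^{1/t})'$ and contradict diffuseness, whereas you enlarge $N_j$ to the full complement $Q_i$ and verify condition $\rm (ii)$ of Theorem \ref{Popa embed} directly with a weakly null sequence of unitaries, which amounts to reproving that special case of Lemma \ref{tensor case} by hand.
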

\begin{proof}
	Suppose by contradiction that $i\neq j$, and put $i=1$ and $j=2$ for simplicity. Then by Lemmas \ref{intertwining tensor} and \ref{commutant form}, one has $uMu^*\ovt  P=N_1^t$, where $u\in\mathcal{U}(N)$, $t>0$, and $P:=(uMu^*)'\cap N_1^t$, that gives a decomposition 
	$$N=uMu^*\ovt  P\ovt  N_2^{1/t}\ovt  N_3\ovt  \cdots\ovt  N_n .$$
Observe by Lemma \ref{intertwining tensor} that the given condition $M\preceq_N N_2$ is equivalent to  $uMu^*\preceq_N N_2^{1/t}$. By Lemma \ref{tensor case}, we get $uMu^*\preceq_{N\cap (N_2^{1/t})'}\mathbb{C}$, which contradicts the diffuseness of $M$.
\end{proof}

\begin{Lem}\label{lemma3}
	Let $M_1\ovt  M_2\ovt  B=N_1\ovt \cdots\ovt  N_n$ $(=:N)$ be a tensor decomposition as $\rm II_1$ factors with $n\geq 2$. If $M_1\preceq_NN_1$ and $M_2\preceq_NN_1$, then $M_1\ovt  M_2\preceq_NN_1$. In this case, $N_1$ is not prime.
\end{Lem}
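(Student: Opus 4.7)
The plan is to deduce the first statement from Lemma \ref{Sa09} applied to the commuting pair $A=M_1$ and $P=M_2$ inside $N$ with the target subalgebra $B=N_1$, and to deduce the second statement from Lemmas \ref{intertwining tensor} and \ref{commutant form} applied to the embedding produced by the first statement.

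For the first claim, I would set $A=M_1$, $P=M_2$, $p=1$, and $B=N_1 \subset N$. Then $A$ and $P$ commute (they sit in different tensor factors of $N=M_1\ovt M_2\ovt B$), and $A\preceq_N N_1$, $P\preceq_N N_1$ are the two hypotheses. So what remains is to verify the structural hypotheses of Lemma \ref{Sa09}. First, $N_1$ is regular in $N$ because $\mathcal{N}_N(N_1)$ contains $\mathcal{U}(N_1)$ and $\mathcal{U}(N_2\ovt\cdots\ovt N_n)$, which together generate $N$. Second, for $\mathcal{N}_N(M_1)'\cap N=\C$: the normalizer $\mathcal{N}_N(M_1)$ contains $\mathcal{U}(M_1)$ together with $\mathcal{U}(M_2\ovt B)$, so it generates $M_1\vee (M_2\ovt B)=N$, giving $\mathcal{N}_N(M_1)'\cap N = Z(N)=\C$. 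Lemma \ref{Sa09} then yields $M_1\ovt M_2 = M_1\vee M_2\preceq_N N_1$.

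For the second claim, by Lemma \ref{intertwining tensor} there exist $t>0$ and $u\in\mathcal{U}(N)$ such that, under the natural identification $N=N_1^t\ovt L$ with $L=N_2^{1/t}\ovt N_3\ovt\cdots\ovt N_n$, we have $u(M_1\ovt M_2)u^*\subset N_1^t$. Since $M_1\ovt M_2$ is a tensor factor of $N$ with complement $B$, conjugating by $u$ gives the tensor decomposition $N=u(M_1\ovt M_2)u^*\ovt uBu^*$. Now I would apply Lemma \ref{commutant form} to the two decompositions
\[
u(M_1\ovt M_2)u^* \ovt uBu^* \;=\; N \;=\; N_1^t \ovt L
\]
together with the inclusion $u(M_1\ovt M_2)u^*\subset N_1^t$. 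Setting $P:=(u(M_1\ovt M_2)u^*)'\cap N_1^t$, the lemma produces a factor $P$ with
\[
N_1^t \;=\; u(M_1\ovt M_2)u^*\ovt P \;\simeq\; M_1\ovt M_2\ovt P.
\]
Amplifying by $1/t$ and distributing the amplification over the first tensor factor yields $N_1\simeq M_1^{1/t}\ovt M_2\ovt P$, which exhibits a tensor decomposition of $N_1$ into at least two diffuse II$_1$ factors (grouping $P$ with either side handles the cases where $P$ is a II$_1$ factor, finite dimensional, or $\C$). Hence $N_1$ is not prime.

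The computation is mostly routine once the right hypotheses are in place; the only subtle point is the verification of the two normalizer conditions for Lemma \ref{Sa09}, which is where the tensor-product structure is used in an essential way. I do not anticipate any real obstacle beyond this bookkeeping.
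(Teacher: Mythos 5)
Your proposal is correct and follows essentially the same route as the paper: the first assertion is obtained from Lemma \ref{Sa09} with $A=M_1$, $P=M_2$, $B=N_1$ (your verification of the regularity of $N_1$ and of $\mathcal{N}_N(M_1)'\cap N=\C$ is exactly the routine check the paper leaves implicit), and the second from Lemmas \ref{intertwining tensor} and \ref{commutant form}, giving $N_1^t=u(M_1\ovt M_2)u^*\ovt P$ and hence $N_1\simeq M_1^{1/t}\ovt M_2\ovt P$, so $N_1$ is not prime. The only difference is that you spell out the case analysis on the factor $P$, which the paper treats as immediate.
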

\begin{proof}
	The first assertion is immediate by Lemma \ref{Sa09}. 
For the second one, by Lemmas \ref{intertwining tensor} and \ref{commutant form}, take $u\in\mathcal{U}(N)$ and $t>0$ such that $u(M_1\ovt  M_2)u^*\subset N_1^t$ and $N_1^t=u(M_1\ovt  M_2)u^*\ovt  P$, where $P:=u(M_1\ovt  M_2)'u^*\cap N_1^t$. Since $N_1\simeq M_1^{1/t} \ovt  M_2\ovt  P$, $N_1$ is not prime.
\end{proof}

\begin{Lem}\label{lemma5}
	Let $\ovt_{i=1}^m M_i= N\ovt B$ $(=:M)$ be a tensor decomposition as $\rm II_1$ factors with $m=\infty$. If $B \preceq_M \ovt_{i=k}^m M_i$ for all $k \in \N$, then $B$ is amenable.
\end{Lem}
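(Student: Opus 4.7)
Set $Q_k := \ovt_{i=k}^\infty M_i$. Since $Q_k' \cap M = \ovt_{i=1}^{k-1} M_i$ is an increasing chain exhausting $M$, the tails satisfy $\bigcap_k Q_k = \C$, and the basic constructions $\langle M, Q_k\rangle$ form an increasing sequence of subalgebras of $B(L^2(M))$ with SOT-dense union. As $M$-bimodules, the associated Connes fusion products $L^2(\langle M,Q_k\rangle) \simeq L^2(M)\otimes_{Q_k}L^2(M)$ become asymptotically coarse, converging to $L^2(M)\otimes L^2(M)$. Crucially, because $M = N\ovt B$, this coarse $M$-bimodule is, viewed as a $B$-bimodule via $B = 1\otimes B \subset M$, a multiple of the coarse $B$-bimodule $L^2(B)\otimes L^2(B^{\mathrm{op}})$.

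For each $k$, apply Theorem~\ref{Popa embed}(iii) to $B \preceq_M Q_k$ to produce a non-zero positive $d_k \in B'\cap\langle M,Q_k\rangle$ with $0 < \Tr_k(d_k) < \infty$, where $\Tr_k$ denotes the canonical semifinite trace on $\langle M,Q_k\rangle$. Normalize to $\Tr_k(d_k) = 1$ and define $\phi_k(\cdot) := \Tr_k(d_k\,\cdot)$. Since $d_k \in B'$ and $\Tr_k$ is a trace, $\phi_k$ is a normal $B$-central state on $\langle M,Q_k\rangle$; by uniqueness of the trace on the $\rm II_1$ factor $B$ this forces $\phi_k|_B = \tau_B$. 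Equivalently, the vector $\xi_k := d_k^{1/2} \in L^2(\langle M,Q_k\rangle,\Tr_k)$ is a $B$-tracial unit vector, meaning $\langle b\,\xi_k\,b', \xi_k\rangle = \tau_B(b'b)$ for all $b,b' \in B$.

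To finish, identify the spaces $L^2(M)\otimes_{Q_k}L^2(M)$ inside a common Hilbert space via the canonical quotients from $L^2(M)\otimes L^2(M)$, which become isometries in the limit $Q_k \searrow \C$, and take a weak limit of the $\xi_k$ along a free ultrafilter. One obtains a $B$-tracial unit vector in the coarse $B$-bimodule $L^2(B)\otimes L^2(B^{\mathrm{op}})$, i.e.\ a hypertrace for $B$; by Connes' characterization of amenable $\rm II_1$ factors, $B$ is amenable.

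The main obstacle is executing this limiting step rigorously. Extending each $\phi_k$ to a state on $B(L^2(M))$ by Hahn--Banach and taking a weak$^*$ limit preserves $B$-centrality only on the union $\bigcup_k\langle M,Q_k\rangle$, which does not contain the subalgebra $1\otimes B(L^2(B))$ needed to restrict to a genuine hypertrace on $B(L^2(B))$. The bimodule viewpoint circumvents this obstruction, since the $L^2$-normalization $\Tr_k(d_k) = 1$, the centrality $d_k \in B'$, and the weak bimodule convergence $\otimes_{Q_k} \to \otimes_{\C}$ as $Q_k \searrow \C$ all pass cleanly to weak Hilbert-space limits of the vectors $\xi_k$, landing them in a genuinely coarse $B$-bimodule.
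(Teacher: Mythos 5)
The first half of your argument is fine: Theorem \ref{Popa embed}(iii) does give a trace-finite positive $d_k\in B'\cap\langle M,Q_k\rangle$, the functional $\Tr_k(d_k\,\cdot)$ is a normal $B$-central state restricting to $\tau_B$, and since the tails $Q_k$ decrease to $\C$ one indeed has $E_{Q_k}(x)\to\tau(x)1$ in $\|\cdot\|_2$, so the inner products of \emph{elementary} tensors in $L^2(M)\otimes_{Q_k}L^2(M)$ converge to the coarse ones. The gap is the limiting step itself, which you acknowledge but do not close, and the sketch you give does not work. There is no canonical isometric (or even contractive) identification of the bimodules $L^2(M)\otimes_{Q_k}L^2(M)$ inside a common Hilbert space: the natural map goes from the algebraic tensor product \emph{onto} the fused bimodule, and since $\|x\otimes_{Q_k}y\|^2=\langle E_{Q_k}(x^*x)y,y\rangle$ versus $\tau(x^*x)\|y\|^2$ there is no norm comparison in either direction; moreover the traces $\Tr_k$ are mutually incompatible on the increasing chain $\langle M,Q_k\rangle$, so the vectors $\xi_k=d_k^{1/2}$ live in genuinely different $L^2$-spaces. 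Convergence of inner products on elementary tensors gives no control on the $\xi_k$, which are arbitrary $L^2$-elements with no uniform approximation by elementary tensors. Even if you form an ultraproduct of the Hilbert spaces $L^2(M)\otimes_{Q_k}L^2(M)$, the resulting $B$-bimodule need not be weakly contained in the coarse $B$-bimodule (weak containment does not pass to such limits), and note that for each \emph{fixed} $k$ a $B$-central tracial vector in $L^2(M)\otimes_{Q_k}L^2(M)$ carries no amenability information at all -- that is exactly what $B\preceq_M Q_k$ gives you. So all the content of the lemma sits in the limit, and your final paragraph asserts rather than proves that it ``passes cleanly''.

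For comparison, the paper avoids this issue by never leaving a single fixed representation: following Ioana's argument (as in \cite{HU15}), one works in $\mathcal{M}=M\ovt M$ with the flip $\Sigma$ and its finite truncations $\Sigma_{\mathcal{F}}$. Because $B$ and the tails are tensor-complemented, $B\preceq_M M_{\mathcal{F}^c}$ upgrades via Lemma \ref{intertwining tensor} to an honest unitary conjugacy $v_{\mathcal{F}}Bv_{\mathcal{F}}^*\subset M_{\mathcal{F}^c}^{t_{\mathcal{F}}}$, and the unitaries $u_{\mathcal{F}}=(v_{\mathcal{F}}^*\otimes1)\Sigma_{\mathcal{F}^\circ}(v_{\mathcal{F}}\otimes1)$ satisfy the exact identity $u_{\mathcal{F}}\Sigma_{\mathcal{F}^\circ}(b\otimes1)=(b\otimes1)u_{\mathcal{F}}$. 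Since all $u_{\mathcal{F}}$ are unit vectors in the one Hilbert space $L^2(\mathcal{M})$, the vector states $X\mapsto\langle Xu_{\mathcal{F}},u_{\mathcal{F}}\rangle$ admit a weak$^*$ limit $\Omega$ on $\B(L^2(\mathcal{M}))$, and the weak convergence $\Sigma_{\mathcal{F}^\circ}(b\otimes1)\to\Sigma(b\otimes1)$ turns the exact intertwining into $B$-centrality of $\Omega$ on $\B(L^2(M))\otimes\C1$, i.e.\ a hypertrace for $B$. If you want to rescue your $d_k$-based approach you would need a comparable device keeping all the data in one fixed $B$-bimodule that is (a multiple of) the coarse one; the elements $d_k$ alone, living in the varying basic constructions $\langle M,Q_k\rangle$, do not provide it.
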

\begin{proof}
	We follow the idea in \cite[Proposition 4.2]{HU15} due to Ioana. In the proof, for any subset  $\mathcal{F}\subset \N$ we put $M_\mathcal{F}:=\ovt_{i\in \mathcal{F}} M_i \subset M$.

Put $\mathcal{M}:= M \ovt M$ and we regard the left $M$ as the original one. Let $\Sigma$ be the flip map on $\mathcal{M}$ given by $\Sigma(a\otimes b) = b\otimes a$. For any $\mathcal{F} \subset \N$, put $\mathcal{M}_{\mathcal{F}}:=M_{\mathcal{F}} \ovt M_{\mathcal{F}}$ with the flip $\Sigma_{\mathcal{F}}$. We regard $\Sigma_{\mathcal{F}}\in \mathrm{Aut}(\mathcal{M})$ by putting $\Sigma_{\mathcal{F}}|_{\mathcal{M}_{\mathcal{F}^c}} := \id$.  
Observe that $\text{weak-}\lim_{\mathcal{F}}\Sigma_{\mathcal{F}}(x) = \Sigma(x)$ for all $x \in \mathcal{M}$, where the limit is taken over all \textit{finite} subsets $\mathcal{F}\subset \N$. 

	Observe next $B \preceq_M M_{\mathcal{F}^c}$ for any finite $\mathcal{F}\subset \N$ by assumption, so there is a unitary $v_{\mathcal{F}} \in M$ and $t_{\mathcal{F}}>0$ such that $v_{\mathcal{F}} B v_{\mathcal{F}}^* \subset M_{\mathcal{F}^c}^{t_{\mathcal{F}}}$ by Lemma \ref{intertwining tensor}. In this case, we may assume that $M_{\mathcal{F}^c}^{t_{\mathcal{F}}} \subset  M_{\max{\mathcal{F}}}\ovt M_{\mathcal{F}^c} $ where $\max{\mathcal{F}}:=\max\{i\mid i\in \mathcal{F}\}$ (recall that we are fixing $M= M_{\mathcal{F}}^{1/t_{\mathcal{F}}} \ovt M_{\mathcal{F}^c}^{t_{\mathcal{F}}}$, so applying again a partial isometry conjugacy at the level of $M\ovt \M_n$ for some $n\in\N$ we may assume this condition). 
In particular  we have $\Sigma_{\mathcal{F}^\circ} (v_{\mathcal{F}} b v_{\mathcal{F}}^* \otimes 1 ) = v_{\mathcal{F}} b v_{\mathcal{F}}^* \otimes 1$ for all $b\in B$, where $\mathcal{F}^\circ := \mathcal{F}\setminus \max\mathcal{F}$. 
We put $u_{\mathcal{F}}:=(v_{\mathcal{F}}^*\otimes 1) \Sigma_{\mathcal{F}^\circ}(v_{\mathcal{F}} \otimes 1) \in \mathcal{U}(\mathcal{M})$ and calculate that for all $b\in B$,
\begin{eqnarray*}
	u_{\mathcal{F}} \Sigma_{\mathcal{F}^\circ}(b\otimes 1) 
	&=& (v_{\mathcal{F}}^*\otimes 1) \Sigma_{\mathcal{F}^\circ}(v_{\mathcal{F}}\otimes 1) \Sigma_{\mathcal{F}^\circ}(b \otimes 1) \\
	&=& (v_{\mathcal{F}}^*\otimes 1) \Sigma_{\mathcal{F}^\circ}(v_{\mathcal{F}}b \otimes 1) \\
	&=& (v_{\mathcal{F}}^*\otimes 1) \Sigma_{\mathcal{F}^\circ}(v_{\mathcal{F}}bv_{\mathcal{F}}^* \otimes 1) \Sigma_{\mathcal{F}^\circ}(v_{\mathcal{F}} \otimes 1)\\
	&=& (v_{\mathcal{F}}^*\otimes 1) (v_{\mathcal{F}}bv_{\mathcal{F}}^* \otimes 1) \Sigma_{\mathcal{F}^\circ}(v_{\mathcal{F}} \otimes 1)\\
	&=& (b \otimes 1) u_{\mathcal{F}}.
\end{eqnarray*}
Define a state $\Omega$ on $\B(L^2(\mathcal{M}))$ by $\Omega(X):= \lim_{\mathcal{F}}\langle X u_{\mathcal{F}}, u_{\mathcal{F}}\rangle_{L^2(\mathcal{M})}$, where the limit is taken over all finite $\mathcal{F}$. It satisfies for $x\in \mathcal{M}$ 
	$$\Omega(a) = \lim_{\mathcal{F}}\langle a u_{\mathcal{F}}, u_{\mathcal{F}}\rangle_{L^2(\mathcal{M})} = \lim_{\mathcal{F}}\tau_{\mathcal{M}}( u_{\mathcal{F}}^* a u_{\mathcal{F}}) = \tau_{\mathcal{M}}(a).$$
For all $b\in \mathcal{U}(B)$, regarding $L^2(\mathcal{M}) = L^2(M)\otimes L^2(M)$ with the right $M$-action given by $M\ni x \mapsto 1\otimes J_M x^* J_M$ where $J_M$ is the anti-unitary map $J_M(y) = y^*$ for $y\in M \subset L^2(M)$, since $u_{\mathcal{F}} \Sigma_{\mathcal{F}^\circ}(b\otimes 1) = (b\otimes 1)u_{\mathcal{F}}$ and $\Sigma_{\mathcal{F}^\circ}(b\otimes 1) \to \Sigma(b\otimes 1)$ weakly for all $b\in B$, we have
\begin{eqnarray*}
	\Omega(b\otimes J_MbJ_M) 
	&=& \lim_{\mathcal{F}}\langle (b\otimes J_MbJ_M)u_{\mathcal{F}}, u_{\mathcal{F}}\rangle_{L^2(\mathcal{M})} \\
	&=& \lim_{\mathcal{F}}\langle (b\otimes 1)u_{\mathcal{F}} (1\otimes b^*), u_{\mathcal{F}}\rangle_{L^2(\mathcal{M})} \\
	&=& \lim_{\mathcal{F}}\langle u_{\mathcal{F}} \Sigma_{\mathcal{F}^\circ}(b\otimes 1) (1\otimes b^*), u_{\mathcal{F}}\rangle_{L^2(\mathcal{M})} \\
	&=& \lim_{\mathcal{F}} \tau_{\mathcal{M}} (\Sigma_{\mathcal{F}^\circ}(b\otimes 1) (1\otimes b^*)) \\
	&=& \tau_{\mathcal{M}} (\Sigma(b\otimes 1) (1\otimes b^*)) =1 .
\end{eqnarray*}
So the state $\Omega$ satisfies $\Omega((b\otimes J_MbJ_M)(X\otimes 1)) = \Omega(X\otimes 1)$ and hence $\Omega(bXb^*\otimes 1) = \Omega((b\otimes J_MbJ_M)(X\otimes 1)(b\otimes J_MbJ_M)^* ) = \Omega (X)$ for all $X\in \B(L^2(M))$ and $b\in \mathcal{U}(B)$. Thus the restriction of $\Omega$ on $\B(L^2(M))\otimes \C1_{L^2(M)}$ is a $B$-central state which is the trace on $B$. This means $B$ is amenable.
\end{proof}

\begin{proof}[Proof of Proposition \ref{prime factorization}]
	We fix $i\in \N$ with $1\leq i \leq m$. Then since $M_i$ is non-amenable, strong primeness and Lemma \ref{lemma5} imply that there is $k \in \N$ with $1\leq k \leq n$ such that $M_i \preceq_M \ovt_{j=1}^k N_j$ (this is obvious if $n \neq \infty$). 
By Lemmas \ref{intertwining tensor} and \ref{commutant form}, one has $uM_iu^*\ovt  P = N_1^t \ovt N_2 \ovt \cdots \ovt N_k$ for a factor $P$, $u\in\mathcal{U}(M)$, and $t>0$. 
Then if $P$ is of type I, then $k=1$ by the primeness of $M_i$ and hence $M_i\preceq_M N_1$. If $P$ is a $\rm II_1$ factor, then by Lemma \ref{lemma1} there is some $j\in \N$ with $1\leq j \leq k$ such that $M_i \preceq_M N_j$. Thus in any case there is $j$ such that $M_i \preceq_M N_j$. We put $\sigma(i):=j$, and $\sigma$ is uniquely determined by Lemma \ref{lemma2}.

\bigskip
\noindent
{\bf Surjectivity of $\sigma$.}

	Assume that $\sigma$ is surjective. Then since the condition $M_i \preceq_M N_{\sigma(i)}$ implies non-amenability of $N_{\sigma(i)}$, we have that all $N_j$ are non-amenable. 

To see the converse direction, we show the following claim.
\begin{claim1}
	Assume that there is $j_0 \in \N$ with $1\leq j_0 \leq n$ such that $M_i \not\preceq_M N_{j_0}$ for all $i\in \N$ with $1\leq i\leq n$. Then we have $\rm (i)$ a contradiction if $m\neq\infty$, and $\rm (ii)$ $N_{j_0}$ is amenable if $m=\infty$. 
\end{claim1}
\begin{proof}
	We fix $k \in \N$ with $1\leq k \leq m$. Observe that $M_i \preceq_M \ovt_{j=1}^k N_{\sigma(j)} $ for all $1\leq i\leq k$, and then Lemma \ref{lemma3} implies $\ovt_{i=1}^k M_i \preceq_M \ovt_{j=1}^k N_{\sigma(j)}$. 
By taking relative commutants, we have 
	$$N_{j_0} \subset (\ovt_{j=1}^k N_{\sigma(j)})' \cap M \preceq_M (\ovt_{i=1}^k M_i)'\cap M = \ovt_{i=k+1}^m M_i.$$ 
If $m\neq \infty$, one can put $k=m$ and obtain $N_{j_0} \preceq_M \C$, a contradiction. If $m=\infty$, then we have $N_{j_0} \preceq_M \ovt_{i=k+1}^m M_i$ for all $k \in \N$ that implies amenability of $N_{j_0}$ by Lemma \ref{lemma5}. 
\end{proof}

	Observe now that $j_0 \not\in\mathrm{Im}\sigma$ if and only if $M_i \not\preceq_M N_{j_0}$ for all $i\in \N$ with $1\leq i\leq n$ (since $M_i \preceq_M N_{j_0}$ exactly means $\sigma(i)=j_0$ by the uniqueness of $\sigma$). 
So this claim shows that $\rm (i)$ $\sigma$ is always surjective if $m\neq \infty$, and $\rm (ii)$ if $m=\infty$ non-surjectivity of $\sigma$ implies amenability of $N_{j_0}$ for some $j_0$. 
This completes the statement for surjectivity (note that $N_j$ can not be amenable if $m \neq \infty$ as we mentioned in Introduction). 

\bigskip
\noindent
{\bf Injectivity of $\sigma$.}

	Assume next that $\sigma$ is not injective. Then there are $i\neq i'$ such that $\sigma(i)= \sigma(i')=:j$, that means $M_i \preceq_M N_{j}$ and $M_{i'}\preceq_MN_j$. By (the proof of) Lemma \ref{lemma3}, $N_j$ is isomorphic to $M_i^t \ovt M_{i'}\ovt P$ for some $t>0$ and a factor $P$. Since $M_i$ and $M_{i'}$ are non-amenable, $N_j$ is not semiprime. 

	Conversely assume $N_{\sigma(i)}$ is not semiprime for some $i$, so there is a tensor decomposition $N_{j_0}=N_{j_0}^1 \ovt N_{j_0}^2$ with non-amenable II$_1$ factors $N_{j_0}^1$ and $N_{j_0}^2$. 
If $M_i \not\preceq_M N_{j_0}^1$ for all $i$, then the claim above shows that we have (i) a contradiction if $m\neq \infty$, and (ii) $N_{j_0}^1$ is amenable if $m=\infty$. So non-amenability of $N_{j_0}^1$ and $N_{j_0}^2$ implies there is $k,l\in \N$ such that $M_k \preceq_MN_{j_0}^1$ and $M_l \preceq_MN_{j_0}^2$. We know $k\neq l$ by Lemma \ref{lemma2}. Finally since $N_{j_0}^1,N_{j_0}^2 \subset N_{j_0}$, we have $M_k \preceq_MN_{j_0}$ and $M_l \preceq_MN_{j_0}$ that means $\sigma(k)=\sigma(l)$. So $\sigma$ is not injective.

\bigskip

	Finally we assume that each $N_j$ is non-amenable and semiprime. Then  $\sigma$ is bijective by previous arguments. By Lemmas \ref{intertwining tensor} and \ref{commutant form}, $M_i \preceq_M N_{\sigma(i)}$ implies $N_{\sigma(i)}\simeq M_i^{t_i} \ovt P_i$ for some $t_i>0$ and a factor $P_i$. Since $N_{\sigma(i)}$ is semiprime, $P_i$ must be amenable. 
\end{proof}

\section{\bf Proofs of main theorems}

	In the proofs of main theorems, we will make use of the following three structural theorems. Note that all of them are formulated with relative amenability, and this relativity is crucial to our proofs.

\begin{Thm}[{\cite[\textrm{Theorem 1.4}]{PV12}}]\label{PV12}
	Let $B$ be any finite von Neumann algebra and $\Gamma$ be weakly amenable and bi-exact group acting on $B$. 
Put $M:=B\rtimes\Gamma$. Then for any von Neumann subalgebra $A\subset M$ which is amenable relative to $B$ in $M$, we have either $\rm (i)$ $A\preceq_MB$ or $\rm (ii)$ $\mathcal{N}_M(A)''$ is amenable relative to $B$ in $M$.
\end{Thm}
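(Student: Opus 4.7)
The plan is to adapt the Popa--Vaes deformation/rigidity machinery combining weak amenability with bi-exactness, implemented \emph{relative to} $B$. The two ingredients are a relative Akemann--Ostrand (AO) property for $M = B\rtimes\Gamma$ coming from bi-exactness, and a $B$-bimodular, finitely $\Gamma$-supported cb-approximation of $\id_M$ coming from weak amenability of $\Gamma$.

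First, I establish the relative boundary homomorphism. Bi-exactness of $\Gamma$ yields, via Ozawa's characterization, a ucp map $C^*_r(\Gamma)\otimes_{\min} C^*_r(\Gamma)^{\rm op} \to \B(\ell^2\Gamma)/\K(\ell^2\Gamma)$ that is a $*$-homomorphism on the algebraic tensor product. Amplifying by $B$ and using trace-preservation of the action, I would promote this to a unital $*$-homomorphism
$$ \nu : M \otimes_{\rm alg} M^{\rm op} \longrightarrow \B(L^2 M)/\mathcal{J}_B, $$
where $\mathcal{J}_B \subset \B(L^2 M)$ is the norm-closed two-sided ideal generated by $\{x e_B y : x, y \in M\}$, i.e., the ``$B$-compact'' operators inside the basic construction $\langle M, e_B\rangle$. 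The image of $\nu$ agrees with the natural left-right action of $M$ on $L^2(M)$ modulo $\mathcal{J}_B$.

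Given $A\lessdot_M B$, Ozawa--Popa provides unit vectors $\xi_n\in L^2(\langle M, e_B\rangle, \Tr)$ with $\|a\xi_n-\xi_n a\|_{2,\Tr}\to 0$ for $a\in A$, and weak amenability yields a net $\Theta_i:M\to M$ of $B$-bimodular, finitely $\Gamma$-supported cb maps converging to $\id_M$ pointwise in $\|\cdot\|_2$ with uniformly bounded cb-norm. The dichotomy arises by asking whether the $\xi_n$ concentrate inside $\mathcal{J}_B$ or modulo $\mathcal{J}_B$, using the $\Theta_i$ to truncate to finite $\Gamma$-support. In the first case, averaging over unitaries of $A$ produces a non-zero positive element of $A'\cap p\langle M, e_B\rangle p$ of finite $\Tr$, and Theorem~\ref{Popa embed}(iii) concludes $A\preceq_M B$. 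In the second case, the weak-$*$ limit of the associated states descends to an $A$-central state on $\B(L^2M)/\mathcal{J}_B$; using the $*$-homomorphism property of $\nu$ together with the fact that $\Ad u$ preserves $A$ for $u\in\mathcal{N}_M(A)$, this state is in fact $\mathcal{N}_M(A)''$-central, and lifts back to an $\mathcal{N}_M(A)''$-central, $\tau$-preserving conditional expectation on $\langle M,e_B\rangle$, which is exactly $\mathcal{N}_M(A)''\lessdot_M B$.

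The main obstacle is the relative AO homomorphism $\nu$ itself: establishing it for arbitrary $B$, with only bi-exactness of $\Gamma$ as the group-theoretic input, requires a careful boundary analysis of $\Gamma$ at infinity with coefficients in $B$, compatible with the conditional expectation $E_B$ and the Jones projection $e_B$. A secondary technical point in the dichotomy is making the separation of the limit state into its $\mathcal{J}_B$-part and its modulo-$\mathcal{J}_B$-part rigorous; this is precisely where the weak-amenability approximations $\Theta_i$ are essential, since their finite $\Gamma$-support allows the necessary mass accounting between the two components that would otherwise be uncontrolled.
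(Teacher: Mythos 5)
The paper itself offers no proof of this statement: it is imported verbatim from Popa--Vaes \cite[Theorem 1.4]{PV12}, so your sketch has to be measured against their argument. Your outline correctly identifies the two inputs (a boundary/AO-type phenomenon relative to $B$ coming from bi-exactness, and $B$-bimodular, finitely $\Gamma$-supported cb-approximations coming from weak amenability) and the general dichotomy shape. But the decisive step is asserted with a reason that does not hold: from ``the limit state is $A$-central, $\nu$ is multiplicative modulo $\mathcal{J}_B$, and $\Ad u$ preserves $A$ for $u\in\mathcal{N}_M(A)$'' you conclude that the state is $\mathcal{N}_M(A)''$-central. That inference is false in general: composing an $A$-central state with $\Ad u$ gives another $A$-central state, not the same one, and nothing in your construction forces invariance under the normalizer. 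This is exactly the point where Ozawa-type AO arguments stop (they give information about $A'\cap M$, i.e.\ solidity-type conclusions) and where the actual Popa--Vaes proof does its real work: the weak-amenability multipliers are used not merely to ``truncate to finite $\Gamma$-support'' or for mass accounting between $\mathcal{J}_B$ and its complement, but to build, via bimodule weak containment and transversality-type estimates on the multiplier coefficients averaged over normalizing unitaries, vectors whose associated states are approximately $\mathcal{N}_M(A)''$-central from the start. Ironically, the part you flag as the main obstacle (the relative AO map $\nu$ for $B\rtimes\Gamma$) is comparatively standard -- it is the kind of coefficient-amplified condition (AO) the present paper itself carries out in Proposition \ref{AO with weakly amenable} -- whereas the normalizer-centrality step you treat as routine is the heart of the theorem.

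Two secondary points also need repair. First, in the alternative (i) you must produce a positive element of finite $\Tr_{\langle M,B\rangle}$ in $A'\cap p\langle M,B\rangle p$, but your dichotomy is phrased modulo the norm-closed ideal of $\B(L^2(M))$ generated by $Me_BM$, which is strictly larger than the ``$B$-compact'' ideal of the basic construction; passing from ``the $\xi_n$ do not concentrate modulo $\mathcal{J}_B$'' to a finite-trace element of $\langle M,B\rangle$ commuting with $A$ requires an argument, not just averaging. Second, for conclusion (ii) relative amenability demands a conditional expectation $p\langle M,B\rangle p\to \mathcal{N}_M(A)''$ whose restriction to $pMp$ is $\tau$-preserving; a normalizer-central state on a quotient of $\B(L^2(M))$ does not automatically lift to such an expectation unless you also control its restriction to $M$ (it must be the trace), which again comes from the specific construction of the vectors in the Popa--Vaes proof rather than from general principles.
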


\begin{Thm}[{\cite[Theorem 1.6]{Io12}\cite[\textrm{Theorem A}]{Va13}}]\label{Va13}
	Let $M = M_1*_BM_2$ be an amalgamated free product of tracial von Neumann algebras $(M_i, \tau)$ with common von Neumann subalgebra $B \subset M_i$ w.r.t. the unique trace preserving conditional expectations. Let $A \subset M$ be a von Neumann subalgebra that is amenable relative to $B$ inside $M$ and satisfies $A \not\preceq_M B$. 
Then we have either $\rm (i)$ $\mathcal{N}_M(A)'' \preceq_M M_i$ for some $i$ or $\rm (ii)$ $\mathcal{N}_M(A)''$ is amenable relative to $B$ inside $M$.
\end{Thm}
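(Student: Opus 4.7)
My plan is to adapt the deformation/rigidity strategy that Popa, Ioana and Vaes developed for free products to the amalgamated setting over $B$. The natural deformation to use is the s-malleable deformation for amalgamated free products of Ioana--Peterson--Popa: embed $M = M_1 *_B M_2$ into $\widetilde{M} = \widetilde{M_1} *_B \widetilde{M_2}$, where $\widetilde{M_i} := M_i *_B (B \ovt L\Z)$, and produce a one-parameter family $(\alpha_t)_{t \in \R} \subset \Aut(\widetilde{M})$ with $\alpha_t \to \id$ pointwise in $\|\cdot\|_2$, together with a symmetry $\beta \in \Aut(\widetilde{M})$ satisfying $\beta|_M = \id$ and $\beta \alpha_t \beta = \alpha_{-t}$ (transversality). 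The key technical input that this deformation provides is the transversality inequality $\|\alpha_{2t}(x) - E_M \alpha_{2t}(x)\|_2 \leq 2 \|\alpha_t(x) - x\|_2$ for $x \in M$.

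The second step is to convert the hypothesis $A \lessdot_M B$ into a statement about $(\alpha_t)$ on $A$. By the characterization of relative amenability in terms of Connes' correspondences (cf.\ the definition in Section \ref{Preliminaries} and the basic-construction description), the bimodule $L^2(\widetilde{M}) \ominus L^2(M)$, viewed as an $A$--$M$ bimodule, is weakly contained in a multiple of the coarse bimodule over $B$. This would let me deduce that, along a subnet of $t \to 0$, the deformation $\alpha_t$ converges uniformly on the unit ball of $A$; equivalently, $\alpha_t|_A$ is close to the identity in a ``compact'' sense.

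The third, and hardest, step is to upgrade uniform convergence of $(\alpha_t)$ on $A$ to uniform convergence on the whole normalizer $\mathcal{N}_M(A)''$. The standard mechanism is Popa's ``untwisting'' argument: if uniform convergence fails on $\mathcal{N}_M(A)''$, then transversality plus a Fourier-type expansion over the free product structure produces a sequence of unitaries in $A$ whose $\alpha_t$-images drift into $\widetilde{M} \ominus M$; pushing this through the Pimsner--Popa basis for $\widetilde{M}$ over $M$ yields a corner of $A$ intertwining into $B$, contradicting $A \not\preceq_M B$. This is where the amalgamation causes genuine trouble, because all estimates must be done relative to $B$ rather than relative to $\C$, so the bimodule computations over $B$ must be handled carefully; this is precisely the point where the Ioana/Vaes refinements enter.

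Once uniform convergence on $\mathcal{N}_M(A)''$ is established, the last step is structural. By a ``spectral gap + intertwining'' argument (analogous to \cite{Va13}), uniform convergence of $\alpha_t$ on $\mathcal{N}_M(A)''$ forces either that $\mathcal{N}_M(A)''$ embeds into $M_i$ for some $i$ in the sense of Theorem \ref{Popa embed} — giving case (i) — or that the image $\alpha_t(\mathcal{N}_M(A)'')$ lies in a neighborhood of $M$ of a very rigid form; by the construction of $\widetilde{M}$ as an amalgamated free product, the only possibility compatible with the second alternative is that $\mathcal{N}_M(A)''$ is amenable relative to $B$ inside $M$, yielding case (ii). I expect the main obstacle to be the third step above, i.e.\ controlling the untwisting argument in the presence of the amalgam, and in particular producing the right notion of ``weak intertwining relative to $B$'' that is both strong enough to rule out case (ii) and weak enough to be forced by the deformation estimates.
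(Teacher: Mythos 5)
You should first note that the paper itself contains no proof of Theorem \ref{Va13}: it is imported as a black box from Ioana \cite{Io12} and Vaes \cite{Va13}, so your sketch must be measured against those proofs. Your toolbox (the IPP deformation $\widetilde{M}=\widetilde{M_1}*_B\widetilde{M_2}$ with $\widetilde{M_i}=M_i*_B(B\ovt L\Z)$, Popa's transversality, Popa's intertwining) is the right one, but the way you feed the hypotheses into it is logically reversed, and this is a genuine gap. In your Step 2 you claim that $A\lessdot_M B$ forces $\alpha_t\to\id$ uniformly on the unit ball of $A$. It does not: uniform convergence of an s-malleable deformation on a subalgebra is a rigidity-type conclusion, and for this deformation it would yield (by the IPP-type criterion) $A\preceq_M M_1$, $A\preceq_M M_2$ or $A\preceq_M B$. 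Take $B=\C$, $M=L\Z*L\Z=L\F_2$ and $A=L\langle ab\rangle$ the abelian subalgebra generated by the product of the two canonical Haar unitaries: $A$ is amenable (hence amenable relative to $B$) and diffuse, so $A\not\preceq_M\C$, yet $A$ intertwines into neither free factor, so uniform convergence on $(A)_1$ must fail. What $A\lessdot_M B$ actually provides is the weak containment of ${}_AL^2(M)_M$ in the relative coarse bimodule ${}_A\bigl(L^2(M)\otimes_B L^2(M)\bigr)_M$, i.e.\ $A$-central states on the basic construction; your assertion that $L^2(\widetilde{M})\ominus L^2(M)$ is weakly contained in a multiple of the coarse bimodule over $B$ is a structural fact about $\widetilde{M}$, true for every $A$, and uses the hypothesis nowhere.

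Your Step 3 then proves too much: if failure of uniform convergence on $\mathcal{N}_M(A)''$ always produced a corner of $A$ embedding into $B$, alternative (ii) would never be needed and the theorem would read ``$\mathcal{N}_M(A)''\preceq_M M_i$ always'', which is false. For instance, $M=(B\ovt L(\Z/2\Z))*_B(B\ovt L(\Z/2\Z))=B\ovt L(D_\infty)$ with $A=M$ satisfies $A\lessdot_M B$ and $A\not\preceq_M B$ (Lemma \ref{crossed product case}), conclusion (i) fails and only (ii) holds; in particular the deformation does not converge uniformly on the normalizer. The actual arguments of Ioana and Vaes run in the opposite direction: one assumes that $P=\mathcal{N}_M(A)''$ is \emph{not} amenable relative to $B$, uses the $A$-central states coming from $A\lessdot_M B$ together with the action of the normalizer on them (the Popa--Vaes machinery behind unique Cartan decomposition and strong solidity), and plays this against the deformation and transversality to force $P\preceq_M M_i$. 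The dichotomy is thus extracted from the negation of (ii), not from an ``upgrade'' of uniform convergence from $A$ to its normalizer; reconstructing that mechanism is precisely the content missing from your outline.
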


\begin{Thm}[{\cite[\textrm{Theorem 2.2}]{SW11}}]\label{SW11}
	Let $\Gamma$ be a wreath product group of a non-trivial amenable group by a non-amenable group and let $B$ be a finite von Neumann algebra. Put $M:=B\ovt  L\Gamma$. Let $Q\subset M$ be a von Neumann subalgebra which is not amenable relative to $B$. If $Q'\cap M$ is a regular subfactor in $M$, then we have $Q'\cap M\preceq_MB$.
\end{Thm}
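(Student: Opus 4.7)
The plan is to combine a wreath-product Popa s-malleable deformation with the spectral-gap / transfer-of-rigidity dichotomy, and then exploit the regularity of $Q' \cap M$ through a mixing argument. Write $\Gamma = \Delta \wr \Lambda$, set $A := L(\oplus_\Lambda \Delta)$ and $B_0 := B \ovt A$, so that $M = B_0 \rtimes \Lambda$ and $B_0 \lessdot_M B$ (using amenability of $A$, via Lemma~\ref{relative amenable and amenable} and Proposition~\ref{relative amenable transitivity}). The $\Lambda$-action on $A$ is a generalized Bernoulli shift on a non-trivial amenable base, so the inclusion $B \subset B_0$ is mixing inside $M$ with respect to $\Lambda$.

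First I would build a Popa s-malleable deformation on a suitable enlargement $\tilde M \supset M$, using the amenability of $\Delta$ to manufacture a doubling $\tilde\Delta \supset \Delta$ supporting a symmetric one-parameter rotation. Extending coordinatewise along $\Lambda$ and tensoring trivially with $B$ and with $L\Lambda$ produces a one-parameter group $(\alpha_t)_{t \in \R} \subset \Aut(\tilde M)$ and a symmetry $\beta \in \Aut(\tilde M)$ with $\beta\alpha_t\beta = \alpha_{-t}$, $\alpha_t|_B = \id$, $\alpha_t|_{L\Lambda} = \id$, and $\alpha_t \to \id$ pointwise in $\|\cdot\|_2$ on $M$.

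Next I would apply Popa's spectral-gap dichotomy in the relatively amenable setting: for any von Neumann subalgebra $Q \subset M$, one has $Q \lessdot_M B$ if and only if $\alpha_t \to \id$ uniformly on the unit ball of $Q$ as $t \to 0$. Since by hypothesis $Q \not\lessdot_M B$, uniform convergence fails on $Q$. Via the s-malleability (the symmetry $\beta$), Popa's transfer-of-rigidity argument then produces some $t_0 > 0$ and a non-zero intertwiner witnessing $\alpha_{t_0}(Q' \cap M) \preceq_{\tilde M} M$. Unwinding via the definition of the deformation, this delivers $Q' \cap M \preceq_M B_0$.

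Finally I would sharpen $B_0$ to $B$ using the regularity hypothesis. The inclusion $B \subset B_0$ in $M = B_0 \rtimes \Lambda$ is mixing (Bernoulli base, non-trivial $\Delta$, $\Lambda$ infinite), so by a standard mixing-and-normalizer argument of Popa, any subalgebra $P \subset M$ with $P \preceq_M B_0$ and $\mathcal{N}_M(P)'' = M$ must in fact satisfy $P \preceq_M B$. Applied to the regular subfactor $P = Q' \cap M$, this yields $Q' \cap M \preceq_M B$. The main obstacle is Step~3: because we have no weak amenability or bi-exactness on $\Lambda$, Theorems~\ref{PV12} and \ref{Va13} are unavailable, and the only source of rigidity is the amenability of $\Delta$, channelled through the s-malleable deformation; getting the intertwiner to live on $Q' \cap M$ rather than on the normalizer of $Q$ is the delicate point, and is where the symmetry $\beta$ is essential.
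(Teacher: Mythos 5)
You should first note that the paper itself offers no proof of this statement: it is imported verbatim from Sizemore--Winchester \cite[Theorem 2.2]{SW11}, so the only question is whether your sketch would stand on its own. Its general philosophy (a malleable deformation of the Bernoulli core plus Popa's spectral gap) is indeed the right one, but two pivotal steps are false as stated and the step that actually drives the theorem is missing. Your claimed dichotomy ``$Q\lessdot_M B$ if and only if $\alpha_t\to\id$ uniformly on the unit ball of $Q$'' cannot hold: $B\ovt L\Lambda$ is not amenable relative to $B$ (since $\Lambda$ is non-amenable), yet by your own construction $\alpha_t$ is literally the identity on $B\ovt L\Lambda$, so convergence is uniform there. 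The correct spectral-gap statement --- and the place where amenability of $\Delta$ is genuinely used, namely in the bimodule estimate $L^2(\widetilde M)\ominus L^2(M)\prec L^2\langle M,B\rangle$, not in building the deformation (which exists for an arbitrary base, e.g.\ via the free malleable deformation with $L\Delta * L\Z$) --- is that $Q\not\lessdot_M B$ forces $\alpha_t\to\id$ uniformly on the unit ball of $Q'\cap M$. Your step instead tries to run a transfer-of-rigidity argument off the \emph{failure} of uniform convergence on $Q$; Popa's machinery extracts intertwiners from uniform convergence, not from its failure, so as written this step produces nothing. Moreover, uniform convergence on $P:=Q'\cap M$ alone cannot yield $P\preceq_M B_0$ (again $B\ovt L\Lambda$ is a counterexample, since the deformation is trivial on it); it is exactly here that regularity of $P$, the symmetry $\beta$, and the clustering/mixing structure of the shift must be combined, and this location analysis --- the heart of the Sizemore--Winchester argument --- is absent from your outline.

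The final reduction is also false as stated: $P=B_0=B\ovt L(\Delta^{(\Lambda)})$ is regular in $M$ and trivially satisfies $P\preceq_M B_0$, yet $P\not\preceq_M B$ because $\Delta\neq\{e\}$ makes $L(\Delta^{(\Lambda)})$ diffuse. So ``$P\preceq_M B_0$ plus $\mathcal N_M(P)''=M$ plus mixing'' does not imply $P\preceq_M B$. What mixing of the Bernoulli action actually controls is the (quasi-)normalizer of subalgebras already conjugated into $B_0$ which do not intertwine into the finitely supported subalgebras $B\ovt L(\Delta^{(F)})$, and converting that into the stated conclusion requires the finer dichotomy alluded to above together with an exclusion of $M\preceq_M B\ovt L(\Delta^{(\Lambda)})$ of the kind provided by Lemma \ref{crossed product case} (using that $\Lambda$ is infinite). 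In short, the plan is in the right spirit, but both the spectral-gap step and the mixing step are stated in demonstrably incorrect forms, and the intertwining analysis that the theorem really rests on is not supplied.
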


\begin{proof}[Proof of Theorem \ref{thmC}]
	The first case was already proved in \cite[Theorem 5.1.1]{Is14}.

	Suppose by contradiction that $M$ is not strongly prime. Then by Lemma \ref{lemma}, there are $\rm II_1$ factors $B,K$, and $L$ such that $B\ovt  M=K\ovt  L$ $(=:N)$ with $K\not\preceq_N B$ and $L\not\preceq_N B$.

\begin{case1}
	{\bf $\bf M$ is a free product $\bf M_1*M_2$.} 
\end{case1}

	By \cite[Corollary F.14]{BO08}, there is a diffuse abelian subalgebra $A\subset K$ such that $A\not\preceq_NB$. Then regarding $N=(B\ovt  M_1)*_B(B\ovt  M_2)$, we apply Theorem \ref{Va13} to $A\subset N$ and get either (i) $\mathcal{N}_N(A)'' \preceq_N (B\ovt  M_i)$ for some $i$ or (ii) $\mathcal{N}_N(A)''$ is amenable relative to $B$ in $N$. 

	Assume first that (ii) happens. Since $L\subset\mathcal{N}_N(A)''$, $L$ is amenable relative to $B$ in $N$. By Theorem \ref{Va13}, we get either $\rm (i)'$ $N \preceq_N (B\ovt  M_i)$ for some $i$ or $\rm (ii)'$ $N$ is amenable relative to $B$ inside $N$. 
If $\rm (i)'$, by Lemma \ref{tensor case}, one has $M\preceq_MM_i$ which contradicts diffuseness of $M_j$ (where $i\neq j$) by Lemma \ref{free product case}. If $\rm (ii)'$, then we get that $M$ is amenable by Lemma \ref{relative amenable and amenable}, which is a contradiction. Thus the condition (ii) does not happen.

	Assume next condition (i). We have two conditions $L \preceq_N (B\ovt  M_i)$ and $L\not\preceq_NB$, and it is known that they imply  $N=K\ovt L\preceq_N(B\ovt  M_i)$ \cite{IPP05}. Here we give a sketch of this argument in the paragraphs below for reader's convenience. Once we obtain it, then by Lemma \ref{tensor case}, this means $M\preceq_MM_i$ which contradicts diffuseness of $M_j$ (where $i\neq j$) by Lemma \ref{free product case}, and hence we can end the proof.

	Suppose now that $L \preceq_N (B\ovt  M_1)$. Then there is a $*$-homomorphism $\theta\colon pLp\rightarrow q(B\ovt  M_1)q$ for some projections $p\in L$, $q\in B\ovt  M_1$, and a partial isometry $v\in N$ such that $v\theta(x)=xv$ for $x\in pLp$. We may replace $q$ with the support projection of $E_{B\ovt  M_1}(v^*v)$. Put $D:=\theta(pLp)$. If $D\preceq_{B\ovt  M_1} B$, then by the choice of $q$, we can deduce $L \preceq_{B\ovt M} B$ (e.g.\ \cite[Remark 3.8]{Va08}) and hence a contradiction. So we have $D\not\preceq_{B\ovt  M_1} B$.

	By \cite[Theorem 1.1]{IPP05}, any quasi-normalizer of $D$ in $q(B\ovt  M)q$ is contained in $B\ovt  M_1$. In particular we have $v^*v\in B\ovt  M_1$. We put $\tilde{q}:=v^*v$, $\tilde{\theta}:=\theta(\cdot)\tilde{q}$, and $\widetilde{D}:=D\tilde{q}$, and observe that $\widetilde{D}\not\preceq_{B\ovt  M_1} B$. Write $vv^*=pp'$ for some $p'\in L'\cap N=K$. Then we get a $\ast$-homomorphism $\mathrm{Ad}v^*\colon pLpp'\rightarrow \tilde{q}(B\ovt  M_1)\tilde{q}$. Since $v^*pLpp'v=\widetilde{D}\not\preceq_{B\ovt  M_1} B$, again by \cite[Theorem 1.1]{IPP05}, any quasi-normalizer of $v^*pLpp'v$ is contained in $B\ovt M_1$. Hence we have $v^*pp'Kpp'v \subset \tilde{q}(B\ovt  M_1)\tilde{q}$. Thus we obtain $v^*pp'(K\ovt L)pp'v \subset \tilde{q}(B\ovt  M_1)\tilde{q}$ and $K\ovt L\preceq_N B\ovt M_1$. This is the desired condition.

\begin{case1}
	{\bf $\bf M$ is a wreath product group factor $\bf L(\Delta\wr\Lambda)$.} 
\end{case1}

	Write $\Lambda=\Lambda_1\times \Lambda_2$ as in the statement. For simplicity, we also write as $\Gamma:=\Delta\wr\Lambda=\Delta_\Lambda\rtimes\Lambda$ and $\Gamma_i:=\Delta_\Lambda \rtimes {\Lambda}_i$ for all $i$.

	Since $K$ and $L$ are regular subfactors, and $K=L'\cap N$ and $L=K'\cap N$, by Theorem \ref{SW11}, it holds that $K$ and $L$ are amenable relative to $B$ in $M$. In particular, $K$ and $L$ are amenable relative to $B\ovt  L{\Gamma}_1$ (since $B \subset B\ovt  L{\Gamma}_1$). Regarding $B \ovt L\Gamma$ as a crossed product of $B\ovt  L{\Gamma}_1$ by $\Lambda_2$, by Theorem \ref{PV12}, we get $K\preceq_N B\ovt  L{\Gamma}_1$ and $L\preceq_N B\ovt  L{\Gamma}_1$. 
We can then apply Lemma \ref{Sa09} and obtain that $N = K \vee L \preceq_N B\ovt \Gamma_1$. However by Lemma \ref{crossed product case}, this contradicts the fact that $\Lambda_2$ is an infinite group. 
\end{proof}

\begin{proof}[Proof of Theorem \ref{thmA}]
	We consider only the case that $M$ is the wreath product group factor, and other cases are proved by Theorem \ref{thmC} and Proposition \ref{s-prime to TF}.

	Put $\Gamma:=\Delta\wr \Lambda$ and $M:=L\Gamma$. We will verify the sufficient condition in Lemma \ref{tensor formula}. Let $B$ be a $\rm II_1$ factor and $t>0$ such that $M\ovt B\simeq M \ovt B^t$ $(=:K\ovt L)$. Regarding $M\ovt B=K\ovt L$, we will show that either $K\preceq_{M\ovt B}B$, $L\preceq_{M\ovt B}B$, $M\preceq_{M\ovt B}L$, or $B\preceq_{M\ovt B}L$. So suppose by contradiction that any of them does not hold and we will deduce amenability of $M$, which is a contradiction.

	We apply Theorem \ref{SW11} to $K$ and get either (i) $K$ is amenable relative to $B$ in $M\ovt B$ or (ii) $K'\cap (M\ovt B)=L\preceq_{M\ovt B} B$. So by assumption, we have that $K$ is amenable relative to $B$ in $M\ovt B$. By the same reason, $L$ is also amenable relative to $B$ in $M\ovt B$. Exchanging the roles of $M\ovt B$ and $M\ovt B^t$, it further holds that $M$ and $B$ are amenable relative to $L$ in $M\ovt B$. Hence using $M \lessdot_{M\ovt B}L$ and $L\lessdot_{M\ovt B}B$ together with Proposition \ref{relative amenable transitivity}, we obtain that $M$ is amenable relative to $B$ in $M\ovt B$. This means that $M$ is amenable by Lemma \ref{relative amenable and amenable} and thus a contradiction.
\end{proof}

\section{\bf Proof of Corollary \ref{corB}}

	Let $G_n \in \mathcal{S}_{\rm factor}$ for $n\in \N$ (possibly $G_n=G_m$ for different $n,m$), and take II$_1$ factors $B_n$ with separable predual such that $\mathcal{F}(B_n)=G_n$. We may assume $B_n=B_m$ whenever $G_n=G_m$. 
Let $N$ be a free product $\rm II_1$ factor given by $N:=L\F_2 * L(\Z^2\rtimes \mathrm{SL}(2,\Z))$. Observe that $\mathcal{F}(N)=\{1\}$ by \cite[Corollary 6.4]{IPP05} and hence $\mathcal{F}(N\ovt B_n) = \mathcal{F}(B_n)$ by Theorem \ref{thmA}. Define an infinite free product $\rm II_1$ factor $M:= *_{n=1}^\infty M_n$, where $M_n:=N\ovt B_n$ for all $n\in \N$. We first show that it satisfies $\mathcal{F}(M)=\bigcap_{n\in\N}\mathcal{F}(B_n)=\bigcap_{n\in \N}G_n$. 

	Recall first from \cite[Theorem 1.5]{DR99} that for any $0<t \leq 1$, one has 
	$$M^t = *_{n=1}^\infty M_n^t$$ 
and this implies $\bigcap_{n\in\N} \mathcal{F}(M_n) \subset  \mathcal{F}(M)$ \cite[Corollary 1.6]{DR99}. Since $\mathcal{F}(B_n) = \mathcal{F}(M_n)$ for all $n\in \N$, we get an inclusion $\bigcap_{n\in\N}\mathcal{F}(B_n) \subset \mathcal{F}(M) $. 

	We next see the reverse inclusion. Fix $t \in \mathcal{F}(M)$. Up to replacing with $1/t$ if necessary, we may assume $0<t \leq 1$ and so we have an isomorphism 
	$$*_{n=1}^\infty M_n = M \simeq M^t =*_{n=1}^\infty M_n^t.$$
Since each $M_n$ is a tensor product of non-amenable $\rm II_1$ factors, we can apply \cite[Main Theorem]{HU15} (see also \cite{Oz04,IPP05,Po06b} for the case of  finitely many free components). So there is a bijection $\alpha$ on $\N$ such that $M_n$ and $M_{\alpha(n)}^t$ are isomorphic for all $n\in \N$. 
Indeed, \cite[Main Theorem]{HU15} actually shows $M_n\preceq_M M_{\alpha(n)}^t$ and $M_{\alpha(n)}^t \preceq_M M_n$. Once we get this condition, then by the proof of unique factorization of free products $\rm II_1$ factors (e.g.\ \cite[Theorem 3.3]{Oz04}) one can show that $M_n$ and $M_{\alpha(n)}^t$ are unitary conjugate in $M$, namely, there is $u\in \mathcal{U}(M)$ such that $uM_n u^* = M_{\alpha(n)}^t$ (under the given isomorphism). This particularly implies 
	$$G_n=\mathcal{F}(B_n)=\mathcal{F}(M_n)=\mathcal{F}(M_{\alpha(n)}^t)=\mathcal{F}(M_{\alpha(n)})=\mathcal{F}(B_{\alpha(n)})=G_{\alpha(n)}$$
and hence $B_n=B_{\alpha(n)}$ by our choice of $\{B_k\}_{k\in\N}$. Thus the above isomorphism $M_n \simeq M_{\alpha(n)}^t$ means $t\in \mathcal{F}(M_n)=\mathcal{F}(B_n)$ for each $n\in \N$, and so $t\in \bigcap_{n\in\N}\mathcal{F}(B_n)$. We conclude $\mathcal{F}(M) \subset \bigcap_{n\in\N}\mathcal{F}(B_n)$.

	Now we start the proof of Corollary \ref{corB}. The stability for intersection was already proved above. 
Let $G\in\mathcal{S}_{\rm factor}$ and take a II$_1$ factor $B$ with separable predual such that $\mathcal{F}(B)=G$. Then by putting $B_n:=B$ for all $n\in \N$, the above argument shows that $\mathcal{F}(B) = \mathcal{F}(M)$ for $M:=\ast_{n\in\N} (B\ovt N)$, which is exactly the formula we mentioned in Introduction.
(Note that even in the case $B_n=B$ for all $n\in \N$, one needs infinitely many free product components, since \cite[Theorem 1.5]{DR99} holds only for infinite free products.)
Since $M$ is a free product, it satisfies the property (TFF), so the first assertion of Corollary \ref{corB} holds. 
The stability for multiplication is then an immediate consequence of the first assertion and the definition of the property (TFF).

\section{\bf Some partial results}

	It would be interesting to know whether $L(\Z^2\rtimes \mathrm{SL}(2,\Z))$ satisfies the property (TFF) or not. However we can not apply Theorem \ref{thmA} because of the lacking of the weak amenability. In this section, we study some partial answers to this problem. 

	Observe that $L(\Z^2\rtimes \mathrm{SL}(2,\Z))$ has two structures: one is the crossed product $L^\infty(\mathbb{T}^2)\rtimes \mathrm{SL}(2,\Z)$ coming from a strongly ergodic action of a bi-exact weakly amenable group; and the other is a bi-exact group factor \cite{Oz08}. 
From these viewpoints, we give partial answers to the property (TFF) as follows. 
See \cite[Definition 12.3.9]{BO08} for the definition of the W$^*$CMAP (or equivalently, the W$^*$CBAP with Cowling--Haagerup constant 1).

\begin{Pro}\label{partial results}
	The following statements hold true.
\begin{itemize}
	\item[$(1)$] Let $\Gamma$ be a non-amenable, weakly amenable, and bi-exact group acting on a standard probability space $X$ as a free, strongly ergodic, and p.m.p.\ action. Put $M:= L^\infty(X)\rtimes \Gamma$. Then for any full $\rm II_1$ factor $B$, one has $\mathcal{F}(B\ovt M)=\mathcal{F}(B)\mathcal{F}(M)$.
	\item[$(2)$] Let $\Gamma$ be a non-amenable bi-exact ICC group. Then for any $\rm II_1$ factor $B$ with the W$^*$CMAP, one has $\mathcal{F}(B\ovt L\Gamma)=\mathcal{F}(B)\mathcal{F}(L\Gamma)$.
\end{itemize}
\end{Pro}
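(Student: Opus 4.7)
The plan is to verify, in both parts, the sufficient condition of Lemma \ref{tensor formula}. Both factors $M$ are prime: $L\Gamma$ in Part (2) is solid (hence prime) by Ozawa's theorem, and $L^\infty(X)\rtimes\Gamma$ in Part (1) is solid relative to $L^\infty(X)$ by Popa--Vaes, which forces primeness. So fix any identification $B\ovt M = K\ovt L \simeq M\ovt B^t$ and suppose for contradiction that none of $K\preceq_{B\ovt M} B$, $L\preceq_{B\ovt M} B$, $M\preceq_{B\ovt M} L$, $B\preceq_{B\ovt M} L$ holds.

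For Part (2), I invoke a Popa--Vaes / Chifan--Sinclair / Houdayer--Vaes type structural dichotomy available when $B$ has the W$^*$CMAP and $\Gamma$ is bi-exact: every diffuse subalgebra $A\subset B\ovt L\Gamma$ satisfies either $A\preceq_{B\ovt L\Gamma} B$ or $\mathcal{N}_{B\ovt L\Gamma}(A)''$ is amenable relative to $B$. Applied to $A = L$, the assumption $L\not\preceq_{B\ovt L\Gamma} B$ forces $\mathcal{N}_{B\ovt L\Gamma}(L)''\supset K\vee L = B\ovt L\Gamma$ to be amenable relative to $B$; by Lemma \ref{relative amenable and amenable} this makes $L\Gamma$ amenable, contradicting the non-amenability of $\Gamma$. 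This verifies the condition of Lemma \ref{tensor formula}.

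For Part (1), I rewrite $B\ovt M = (B\ovt L^\infty(X))\rtimes \Gamma$ and apply Theorem \ref{PV12} with base algebra $B\ovt L^\infty(X)$. Granting that $K$ and $L$ are both amenable relative to $B\ovt L^\infty(X)$, Theorem \ref{PV12} yields $K\preceq_{B\ovt M} B\ovt L^\infty(X)$ and $L\preceq_{B\ovt M} B\ovt L^\infty(X)$, since the alternatives $\mathcal{N}_{B\ovt M}(K)'' = B\ovt M$ or $\mathcal{N}_{B\ovt M}(L)'' = B\ovt M$ being amenable relative to $B\ovt L^\infty(X)$ would, by standard facts on relative amenability of crossed products, force $\Gamma$ to be amenable. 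Since $B\ovt L^\infty(X)$ is regular in the crossed product $B\ovt M$ and $\mathcal{N}_{B\ovt M}(K)'\cap(B\ovt M) = \C 1$, Lemma \ref{Sa09} then gives $K\vee L = B\ovt M \preceq_{B\ovt M} B\ovt L^\infty(X)$; by Lemma \ref{tensor case} this is equivalent to $M\preceq_M L^\infty(X)$, and Lemma \ref{crossed product case} forces $\Gamma$ to be finite, a contradiction.

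The main obstacle is the amenability relativeness step in Part (1), where the fullness of $B$ must be used. My expected strategy is a crossed-product analog of Theorem \ref{SW11}: if $K$ were not amenable relative to $B\ovt L^\infty(X)$, then using the regular commutant $K'\cap(B\ovt M) = L$ together with the fullness of $B$, one should deduce $L\preceq_{B\ovt M} B$, contradicting our standing assumption. Establishing this structural lemma in the crossed-product setting with a full-factor tensor component is the technical heart of Part (1).
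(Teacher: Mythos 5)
Both parts of your proposal stall at exactly the step that constitutes the real content of the proposition. In Part (2), the dichotomy you invoke (every diffuse $A\subset B\ovt L\Gamma$ satisfies $A\preceq B$ or $\mathcal{N}_{B\ovt L\Gamma}(A)''\lessdot B$) is a relative strong-solidity statement that is not available under the stated hypotheses: the Popa--Vaes and Chifan--Sinclair theorems of this type require $\Gamma$ to be \emph{weakly amenable} (and Theorem \ref{PV12} moreover takes $A\lessdot B$ as a hypothesis), whereas the whole point of Part (2) is that weak amenability of $\Gamma$ is dropped and replaced by the W$^*$CMAP of $B$. What is actually available, and what the paper proves as Proposition \ref{AO with weakly amenable}, is the weaker relative-commutant (semisolidity-type) statement: if $A\not\preceq_M B$ then $A'\cap M\lessdot_M B$, obtained by a relative condition-(AO) argument on $\langle M,B\rangle$ in which the W$^*$CMAP of $B$ is used to pass from $L\Gamma\otimes_{\rm min}B$ to $L\Gamma\ovt B$. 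With only this commutant version your one-step normalizer argument does not run; instead one argues as the paper does: $K\not\preceq_M B$ gives $L\lessdot_M B$, exchanging the two decompositions $B\not\preceq_M L$ gives $L\Gamma\lessdot_M L$, and then transitivity (Proposition \ref{relative amenable transitivity}) plus Lemma \ref{relative amenable and amenable} yield amenability of $L\Gamma$, a contradiction. So the citation you lean on must be replaced by a proof of the relative (AO) proposition, which is the technical heart of (2).

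In Part (1) you leave the crucial step unproven and you say so: under the standing assumptions you need $K,L\lessdot_{B\ovt M} B\ovt L^\infty(X)$ (or some substitute), and your proposed fix --- a crossed-product analog of Theorem \ref{SW11} --- is speculative; Theorem \ref{SW11} is specific to wreath products and no such statement is established for general crossed products by weakly amenable bi-exact groups. The paper's route is different and closes the gap with an ingredient your proposal never uses. From Theorem \ref{PV12} and the argument of \cite[Theorem 5.1.1]{Is14} one only extracts the disjunction $K\preceq_{B\ovt M} B\ovt L^\infty(X)$ \emph{or} $L\preceq_{B\ovt M} B\ovt L^\infty(X)$ (both simultaneously is impossible --- your own Lemma \ref{Sa09} computation shows it would force $M\preceq_M L^\infty(X)$, contradicting Lemma \ref{crossed product case}). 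Fullness then enters as follows: strong ergodicity makes $M$ full, so $M\ovt B$ is full by Connes' result, hence $K$ and $L$ are full, and Hoff's result (Lemma \ref{lemma for full}) upgrades the embedding into $B\ovt L^\infty(X)$ to an embedding into $B$, i.e.\ $K\preceq_{B\ovt M} B$ or $L\preceq_{B\ovt M} B$, which verifies the hypothesis of Lemma \ref{tensor formula}. Without Lemma \ref{lemma for full} (or a proof of your hypothetical structural lemma, which would essentially have to contain it), the fullness of $B$ is never actually exploited and the argument does not close.
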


The first assertion of this proposition will be proved by combining the proof of \cite[Theorem 5.1.1]{Is14} with the following lemma.

\begin{Lem}[{\cite[Proposition 6.3]{Ho15}}]\label{lemma for full}
	Let $N=M\ovt B = K\ovt L$ be a tensor decomposition as $\rm II_1$ factors, and let $A \subset M$ be a Cartan subalgebra. If $K \preceq_N A\ovt B$ and $K$ is full, then we have $K \preceq_N B$.
\end{Lem}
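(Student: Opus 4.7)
The plan is to take the intertwiner witnessing $K \preceq_N A\ovt B$ and push its target from $A\ovt B$ down to $B$, using fullness of $K$ to forbid any genuine ``spreading'' of $K$ along the abelian direction $A$.

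\emph{Reduction to an intertwining inside $A\ovt B$.} First I would apply Theorem \ref{Popa embed}$\rm (i)$ to the hypothesis and obtain projections $p\in K$, $q\in A\ovt B$, a unital normal $*$-homomorphism $\theta\colon pKp\to q(A\ovt B)q$, and a non-zero partial isometry $v\in pNq$ with $xv=v\theta(x)$ for all $x\in pKp$. Set $C:=\theta(pKp)\subset q(A\ovt B)q$. Since $pKp$ is a corner of the full II$_1$ factor $K$, it is itself a full II$_1$ factor, and so is its isomorphic image $C$. The lemma will follow from the sub-claim $C\preceq_{A\ovt B} B$, because, granting this, Theorem \ref{Popa embed}$\rm (i)'$ supplies $n\in\N$, a non-zero normal $*$-homomorphism $\psi\colon C\to B\ovt\M_n$, and a partial isometry $w\in (A\ovt B)\ovt\M_n$ implementing $C\preceq_{A\ovt B} B$; composing $\psi\circ\theta\colon pKp\to B\ovt\M_n$ with the combined partial isometry built from $v$ and $w$ then verifies the hypothesis of Theorem \ref{Popa embed}$\rm (i)'$ for the inclusion $B\ovt\M_n\subset N\ovt\M_n$, yielding $K\preceq_N B$.

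\emph{The sub-claim via fullness.} To establish $C\preceq_{A\ovt B} B$, I would argue by contradiction. If $C\not\preceq_{A\ovt B} B$, Theorem \ref{Popa embed}$\rm (ii)$ provides a net of unitaries $(w_i)_i\subset\mathcal{U}(C)$ with $\|E_B(x^* w_i y)\|_2\to 0$ for every $x,y\in A\ovt B$; taking $x=y=1$ yields $\|E_B(w_i)\|_2\to 0$, and in particular $\tau(w_i)=\tau_B(E_B(w_i))\to 0$. The decisive step is to convert this escape into asymptotic centrality of $(w_i)$ in $C$. Since $A\subset Z(A\ovt B)$, every $f\in A$ commutes with $w_i$, so for an elementary tensor $f\otimes d\in A\odot B$ one has the algebraic identity
\[
[f\otimes d, w_i] \;=\; (f\otimes 1)\,[1\otimes d,\, w_i].
\]
Combining $\|E_B(x^* w_i y)\|_2\to 0$ with a Fourier-type expansion of $w_i\in L^2(A\ovt B)=L^2(A)\otimes L^2(B)$ along an orthonormal basis of $L^2(A)$ forces $\|[1\otimes d,w_i]\|_2\to 0$ for each $d\in B$. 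A $\|\cdot\|_2$-approximation of a general $c\in C\subset A\ovt B$ by finite sums of elementary tensors in $A\odot B$ then upgrades this to $\|[c,w_i]\|_2\to 0$ for every $c\in C$, i.e.\ $(w_i)$ is a central sequence in $C$. Fullness of $C$ implies $\|w_i-\tau(w_i)1\|_2\to 0$, which together with $\tau(w_i)\to 0$ contradicts $\|w_i\|_2=1$.

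The principal obstacle is the Fourier step turning the vanishing of $E_B(\cdot\, w_i\, \cdot)$ into genuine asymptotic centrality of $(w_i)$ in $C$; this is where one truly uses that $A$ is the full centre of $A\ovt B$ (as opposed to merely a commuting abelian subalgebra), since the central $L^2(A)$-directions are what allow the bimodule expansion to control $[1\otimes d,w_i]$. The remaining bookkeeping — composing $\theta$ with the intertwining of $C$ into $B$ to produce the data of Theorem \ref{Popa embed}$\rm (i)'$ for $K\preceq_N B$ — is routine.
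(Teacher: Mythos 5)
The paper offers no proof of this lemma: it is imported wholesale from \cite[Proposition 6.3]{Ho15}, so your argument has to stand on its own. Your reduction to the sub-claim $C\preceq_{A\ovt B}B$ is fine in principle (modulo the standard precaution that the composed intertwiner be non-zero, arranged by replacing $q$ with the support projection of $E_{A\ovt B}(v^*v)$, exactly as the paper does in Case 1 of the proof of Theorem \ref{thmC}). The fatal problem is the step you yourself flag as the principal obstacle: the ``Fourier step'' is false. Write $w_i=\sum_k a_k\otimes b_k^{(i)}$ along an orthonormal basis $(a_k)_k\subset A$ of $L^2(A)$ with $a_0=1$. The escape condition $\|E_B(x^*w_iy)\|_2\to 0$ for all $x,y\in A\ovt B$ says precisely that each fixed coefficient $b_k^{(i)}=E_B((a_k^*\otimes 1)w_i)$ tends to $0$ in $\|\cdot\|_2$, i.e.\ the mass $\sum_k\|b_k^{(i)}\|_2^2=1$ escapes to infinity in the index $k$; it gives no control whatsoever on $\|[1\otimes d,w_i]\|_2^2=\sum_k\|[d,b_k^{(i)}]\|_2^2$. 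Concretely, take $u_i\in\mathcal{U}(A)$ tending to $0$ weakly and a non-scalar $v\in\mathcal{U}(B)$, and set $w_i:=u_i\otimes v$. Then $\|E_B(x^*w_iy)\|_2\to 0$ for all $x,y$, yet $\|[1\otimes d,w_i]\|_2=\|[d,v]\|_2$ is a non-zero constant for suitable $d\in B$. Since your derivation of the commutator decay uses no property of $C$ beyond $C\subset A\ovt B$, it is refuted by this example, and the contradiction with fullness never gets off the ground.

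Note also where fullness would have to enter. In your scheme it appears only in the last line, after the (unavailable) centrality claim; but fullness is the only hypothesis separating the true statement from false ones. Indeed, when $A$ is diffuse one has $A\ovt B\not\preceq_{A\ovt B}B$ by Lemma \ref{tensor case}, and one can build non-full subfactors $C\subset A\ovt B$ (measurable fields of embeddings of $R$ into $B$ over the spectrum of $A$) with $C\not\preceq_{A\ovt B}B$; so no soft argument that only uses ``$C$ is a factor'' at the end can succeed. A correct proof must make fullness act against the diffuse central direction $Aq\subset C'\cap q(A\ovt B)q$ from the outset, e.g.\ via Connes' spectral gap characterization of fullness applied to the decomposition of $q(A\ovt B)q$ over its center, rather than via triviality of central sequences of $C$ --- the unitaries produced by Theorem \ref{Popa embed}(ii) are simply not central in $C$.
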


\begin{proof}[Proof of Proposition \ref{partial results}(1)]
	We show that for any tensor decomposition $M\ovt B =  K\ovt L$ with $B$ full, one has $K \preceq_{M\ovt B} B$ or $L \preceq_{M\ovt B} B$. This gives the conclusion by Lemma \ref{tensor formula}.

	Observe that $M$ is full since the action is strongly ergodic. So by \cite[Corollary 2.3]{Co75}, the tensor product $M \ovt B$ is full, and hence so are $K$ and $L$. By Theorem \ref{PV12} and the proof of \cite[Theorem 5.1.1]{Is14}, one has $K \preceq_{M\ovt B} B\ovt L^\infty(X)$ or $L \preceq_{M\ovt B} B\ovt L^\infty(X)$. Then we can apply Lemma \ref{lemma for full}, and obtain $K \preceq_{M\ovt B} B$ or $L \preceq_{M\ovt B} B$. 
\end{proof}

	For the second assertion of Proposition \ref{partial results}, we prove the following proposition. This should be regarded as a ``relativization'' of Ozawa's semisolidity theorem \cite[Theorem 4.6]{Oz04}. Actually we can not give a complete generalization of Ozawa's theorem, since local reflexivity (or exactness) of $C^*_\lambda(\Gamma)$ is not enough as an extension property in this setting. We will use the W$^*$CMAP on $B$ to avoid this problem.

\begin{Pro}\label{AO with weakly amenable}
	Let $\Gamma$ be a bi-exact group and $B$ a finite von Neumann algebra. Put $M:=L\Gamma \ovt B$. Then for any von Neumann subalgebra $A \subset M$ with $A\not\preceq_MB$, there is a u.c.p.\ map from $\langle M, B \rangle$ into $A'\cap M$, which restricts to the conditional expectation $E_{A'\cap M}$ on $L\Gamma \otimes_{\rm min} B$. 

If $B$ has the W$^*$CMAP, then the resulting u.c.p.\ map can be taken as the one restricting $E_{A'\cap M}$ on $ L\Gamma \ovt B$, and thus $A'\cap M$ is amenable relative to $B$.
\end{Pro}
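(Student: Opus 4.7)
The plan is to adapt Ozawa's proof of semisolidity to the relative setting: bi-exactness of $\Gamma$ will supply an Akemann--Ostrand-type u.c.p.\ map modulo a small-at-infinity ideal, the negation $A\not\preceq_M B$ will produce a state killing this ideal, and in the second assertion the W$^*$CMAP on $B$ will be used to promote the minimal tensor product conclusion to the spatial one. Throughout I identify $L^2(M)=\ell^2\Gamma\otimes L^2(B)$ so that $\langle M,B\rangle=\B(\ell^2\Gamma)\ovt B$, with $L\Gamma$ acting on the first factor and $JBJ$ on the second.

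First, applying Theorem \ref{Popa embed}(ii) to $A\not\preceq_M B$, I extract a net $(w_i)\subset\mathcal{U}(A)$ with $\|E_B(b^*w_i a)\|_{2,\tau}\to 0$ for all $a,b\in M$. Passing to a suitable cluster point, I define a state
\[
\omega(X)=\lim_i\langle X\hat w_i,\hat w_i\rangle,\qquad X\in\B(\ell^2\Gamma)\ovt B,
\]
which restricts to $\tau$ on $M$ and is asymptotically $(w_i)$-invariant, hence $A$-central. The key consequence of the vanishing of $E_B(b^*w_i a)$ is that $\omega$ annihilates the ideal $\K(\ell^2\Gamma)\otimes_{\mathrm{min}} B$ of operators that are compact in the $\Gamma$-direction.

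Next, bi-exactness of $\Gamma$, after tensoring with $B$, should provide a u.c.p.\ map
\[
\nu\colon(C^*_\lambda(\Gamma)\otimes_{\mathrm{min}}B)\otimes_{\mathrm{min}}C^*_\rho(\Gamma)\longrightarrow\bigl(\B(\ell^2\Gamma)\ovt B\bigr)\big/\bigl(\K(\ell^2\Gamma)\otimes_{\mathrm{min}}B\bigr)
\]
lifting the obvious multiplication map $(x\otimes b)\otimes y^{\mathrm{op}}\mapsto x(1\otimes b)Jy^*J$. Since $\omega$ kills the denominator, $\omega\circ\nu$ is well defined. Averaging this composite against the right $JAJ$-action — available through the $C^*_\rho(\Gamma)$-factor thanks to $A$-centrality of $\omega$ — produces a u.c.p.\ map from $L\Gamma\otimes_{\mathrm{min}}B$ onto $A'\cap M$ that coincides with $E_{A'\cap M}$ on its domain. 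An Arveson extension to $\langle M,B\rangle=\B(\ell^2\Gamma)\ovt B$ then yields the first assertion.

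The principal obstacle is the second assertion, where the restriction to $E_{A'\cap M}$ is demanded on the spatial tensor product $L\Gamma\ovt B$ rather than only on the minimal one. The AO construction above naturally sees the $B$-direction only through the minimal tensor, so the Arveson extension from the first step need not be compatible with the spatial structure. To bridge this gap I would use the W$^*$CMAP to choose a net $(\psi_\alpha)$ of normal finite-rank completely bounded maps on $B$ with $\|\psi_\alpha\|_{\mathrm{cb}}\to 1$ and $\psi_\alpha\to\id_B$ in the point-strong topology. The amplification $\id_{\B(\ell^2\Gamma)}\ovt\psi_\alpha$ sends $\B(\ell^2\Gamma)\ovt B$ into the algebraic, hence minimal, tensor product, where the preceding construction applies; composing and taking a point-weak$^*$ limit (after renormalising by $\|\psi_\alpha\|_{\mathrm{cb}}^{-1}$) produces the required u.c.p.\ map $\Phi\colon\langle M,B\rangle\to A'\cap M$ whose restriction to $L\Gamma\ovt B$ is $E_{A'\cap M}$, which is precisely relative amenability of $A'\cap M$ over $B$. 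The most delicate technical points I expect to face are (i) verifying that the bi-exact AO factorisation tensors correctly with an arbitrary finite $B$ so that the ideal in the quotient is exactly $\K(\ell^2\Gamma)\otimes_{\mathrm{min}}B$ rather than something larger that would obstruct normality, and (ii) ensuring that the W$^*$CMAP limit in the final step really preserves the restriction condition on the full spatial tensor.
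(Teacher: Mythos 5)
Your overall strategy---a relative Akemann--Ostrand argument driven by bi-exactness, with the non-intertwining condition $A\not\preceq_M B$ (indeed the hypothesis used in the paper's proof, despite the wording of the statement) killing the $\Gamma$-direction compacts, and the W$^*$CMAP used only at the very end to pass from $L\Gamma\otimes_{\rm min}B$ to $L\Gamma\ovt B$---is the same as the paper's, and your final step (amplify normal finite-rank maps on $B$ to $\id\otimes\psi_\alpha$ on $\B(\ell^2(\Gamma))\ovt B$, observe they send $M$ into $L\Gamma\otimes_{\rm min}B$, compose with the first map and take a cluster point) is essentially the paper's argument. The genuine gap is at the heart of the first assertion: you only construct an $A$-central \emph{state} $\omega$ vanishing on the compacts, and then assert that ``averaging the composite against the right $JAJ$-action'' produces a u.c.p.\ map onto $A'\cap M$ agreeing with $E_{A'\cap M}$. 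There is no such averaging procedure: a state cannot be promoted in this way to an operator-valued map, and note that an $A$-central state on $\langle M,B\rangle$ restricting to $\tau$ would only give amenability of $A$ relative to $B$, not anything about $A'\cap M$, which is the algebra the statement is about. The paper's mechanism is different: first reduce to $A$ abelian (\cite[Corollary F.14]{BO08}), so that averaging over $\mathcal{U}(A)$ yields a \emph{proper conditional expectation} $\Psi_A\colon\B(L^2(M))\to A'\cap\B(L^2(M))$; the condition $A\not\preceq_M B$ then gives $\Psi_A(\K(\ell^2(\Gamma))\otimes_{\rm min}\B(L^2(B)))=0$, and the desired u.c.p.\ map is $\Psi_A\circ\nu$, bounded because it coincides with $\Psi_A\circ\Theta$ modulo that ideal. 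You never perform this reduction nor build any expectation onto the commutant, so the key object of the proposition is never actually produced.

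Two further steps, as you set them up, would also fail. First, your AO map has only the right copy of $C^*_\lambda(\Gamma)$ in its multiplicative domain; without the right action of $B$ as well, the image of the final map cannot be forced into $M$---you only land in the commutant of $JC^*_\lambda(\Gamma)J$, i.e.\ in $L\Gamma\ovt\B(L^2(B))$. The paper's device is to work on the relative tensor product $H=L^2(M)\otimes_BL^2(M)$, identifying $C^*\{\pi_H(M_0),\theta_H(M_0)\}$ (with $M_0=C^*_\lambda(\Gamma)\otimes_{\rm min}B$) with $C^*_\lambda(\Gamma)\otimes_{\rm min}C^*\{B,B^{\rm op}\}\otimes_{\rm min}C^*_\lambda(\Gamma)^{\rm op}$ and letting the bi-exactness map act on the outer legs only; relatedly the ideal to be annihilated is $\K(\ell^2(\Gamma))\otimes_{\rm min}\B(L^2(B))$ (right multiplication involves $J_BBJ_B$), and your $\K(\ell^2(\Gamma))\otimes_{\rm min}B$ is not even an ideal of $\B(\ell^2(\Gamma))\ovt B$, so the quotient you write is not available (your $\omega$ does vanish on the larger ideal, so this point is repairable). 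Second, Arveson extension with target $A'\cap M$ is illegitimate, since $A'\cap M$ need not be injective; the paper extends with target $\B(L^2(M))$ on a C$^*$-algebra containing $\theta_H(M_0)$, keeps $\theta_H(M_0)$ in the multiplicative domain, and only afterwards concludes that $\pi_H(\langle M,B\rangle)$ is mapped into $A'\cap(M_0^{\rm op})'=A'\cap M$. Finally, you do not explain why the restriction to $L\Gamma\otimes_{\rm min}B$ equals $E_{A'\cap M}$; in the paper this uses properness of $\Psi_A$ together with an extension from $C^*_\lambda(\Gamma)$ to $L\Gamma$ via local reflexivity, retaining normality on $L\Gamma$.
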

\begin{proof}
	Since most parts of the proof are straightforward ``relativization'' of the one of \cite[Theorem 4.6]{Oz04}, we give only a sketch. Our proof here is very similar to the one of \cite[Theorem 15.1.5]{BO08} (and its generalization \cite[Theorem 5.3.3]{Is12}).  
We will use the Hilbert space $H:=L^2(M)\otimes_B L^2(M) = \ell^2(\Gamma)\otimes L^2(B) \otimes \ell^2(\Gamma)$, in stead of $L^2(M)\otimes L^2(M)$. 

	The first part is exactly the same as the one of \cite[Theorem 15.1.5]{BO08} (and \cite[Theorem 5.3.3]{Is12}). Assume $A\not\preceq_{M}B$. By \cite[Corollary F.14]{BO08}, we may assume $A$ is abelian. Then one can define a proper conditional expectation 
\begin{equation*}
	\Psi_{A} \colon \mathbb{B}(L^2(M)) \longrightarrow A'\cap \mathbb{B}(L^2(M)).
\end{equation*}
The condition $A\not\preceq_{M}B$ implies $\Psi_A(\K(\ell^2(\Gamma))\otimes_{\rm min}\B(L^2(B)))=0$. 

	From now on, we use the relative tensor product. In particular, we will not use \cite[Proposition 4.2]{Oz04} but use a characterization of bi-exactness \cite[Lemma 15.1.4]{BO08}. 
Let $\pi_H$ and $\theta_H$ be left and right actions of $M$ on $H$, and denote by $\nu$ the algebraic $\ast$-homomorphism from $\pi_H(M)\theta_H(M^{\rm op})$ to $\B(L^2(M))$ given by $\nu(\pi_H(a)\theta_H(b^{\rm op}))=ab^{\rm op}$. 
Let $\Theta\colon C^*_\lambda(\Gamma)\otimes_{\rm min} C^*_\lambda(\Gamma)^{\rm op} \to \B(\ell^2(\Gamma))$ be a u.c.p.\ map such that $\Theta(a\otimes b^{\rm op}) - ab^{\rm op}\in \K(\ell^2(\Gamma))$ \cite[Lemma 15.1.4]{BO08}. 
Put $M_0:=C^*_\lambda(\Gamma)\otimes_{\rm min} B$. 
Identifying C$^* \{\pi_H(M_0), \theta_H(M_0) \}$ as $C^*_\lambda(\Gamma)\otimes_{\rm min} \mathrm{C}^*\{B,B^{\rm op}\} \otimes_{\rm min}C^*_\lambda(\Gamma)^{\rm op}$, we may define $\Theta$ on this algebra, which is the identity on $B$ and $B^{\rm op}$. 
Observe that at the C$^*$-algebra level, $\Theta$ and $\nu$ coincide \textit{modulo $\K(\ell^2(\Gamma))\otimes_{\rm min}\B(L^2(B))$}, that is, 
	$$\Theta(\pi_H(a)\theta_H(b^{\rm op})) - a b^{\rm op} \in \K(\ell^2(\Gamma))\otimes_{\rm min}\B(L^2(B)), \quad a,b \in M_0 .$$
Thus on C$^* \{ \pi_H(M_0), \theta_H(M_0)\}$ the composition $\Phi_A\circ \nu$  coincides with $\Phi_A\circ \Theta$, and hence is a bounded u.c.p.\ map.

	Observe that $\Phi_A|_M$ is the unique trace preserving conditional expectation $E_{A'\cap M} \colon M\to A'\cap M$, and hence in particular \textit{normal} on $M$. So the map $\Phi_A \circ \nu$ is a normal u.c.p.\ map on $\pi_H(M)$. Regarding again C$^* \{\pi_H(M_0), \theta_H(M_0) \} = C^*_\lambda(\Gamma)\otimes_{\rm min} \mathrm{C}^*\{B,B^{\rm op}\} \otimes_{\rm min}C^*_\lambda(\Gamma)^{\rm op}$, we can apply the local reflexivity of $C^*_\lambda(\Gamma)$ (this comes from exactness of $\Gamma$) and extend $\Phi_A\circ \nu$ on $L\Gamma \otimes_{\rm min} \mathrm{C}^*\{B,B^{\rm op}\} \otimes_{\rm min}C^*_\lambda(\Gamma)^{\rm op}$ which is normal on $L\Gamma$ (see Lemma 9.4.1, Proposition 9.2.5, and the proof of Lemma 9.2.9 in \cite{BO08} for these facts). 
Finally by Arveson's extension theorem, we again extend $\Phi_A \circ \nu$ on C$^*\{\pi_H(\langle M,B \rangle), \theta_H(M_0)\}$. Then the restriction on $\pi_H(\langle M,B \rangle)$ of the resulting map defines a u.c.p.\ map from $\langle M,B \rangle$ into $A'\cap (M_0^{\rm op})' =A'\cap M$. By construction, this is a desired item.

	Finally assume that $B$ has the W$^*$CMAP, and take a net $(\psi_i)_i$ of normal finite rank c.c.\ maps on $B$ converging to $\id_B$ point weakly. We extend these maps to $\langle M,B \rangle = \B(\ell^2(\Gamma))\ovt B$ by $\id \otimes \psi_i =: \widetilde{\psi}_i$. Observe that $\widetilde{\psi}_i (M) \subset L\Gamma \otimes_{\rm min} B$ for all $i$. Let $\Phi$ be the u.c.p.\ map constructed in the first half of the proof. If we take a cluster point $\widetilde{\Phi}$ of $(\Phi\circ \widetilde{\psi}_i)_i$, then this is a c.c.\ map from $\langle M,B \rangle$ into $A'\cap  M$ which restricts to $E_{A'\cap M}$ on $M$. In fact, for any $x \in M \ovt B$, one has 
	$$\Phi\circ \widetilde{\psi}_i (x) = E_{A'\cap M}\circ \widetilde{\psi}_i (x) \to E_{A'\cap M}(x), \quad \text{as } i\to \infty.$$
Hence $\widetilde{\Phi}|_M = E_{A'\cap M}$ and $\widetilde{\Phi}$ is a conditional expectation onto $A'\cap M$. 
\end{proof}

\begin{proof}[Proof of Proposition \ref{partial results}(2)]
	Take $t \in \mathcal{F}(L\Gamma \ovt B)$ and fix $M:=L\Gamma \ovt B = L\Gamma \ovt B^t (=: K \ovt L)$. 
By (the proof of) Lemma \ref{tensor formula}, we have only to show that 
$K\preceq_{M} B$, $L\preceq_{M} B$, $L\Gamma \preceq_{M}L$, or  $B\preceq_{M}L$.
So suppose by contradiction that each of them does not happen.

	We apply Proposition \ref{AO with weakly amenable} to $K$ (actually an abelian subalgebra of $K$ by \cite[Corollary F.14]{BO08}), and get that $L\lessdot_M B$. By exchanging the roles, we also have that $L\Gamma \lessdot_M L$ and hence $L\Gamma \lessdot_M B$ by Proposition \ref{relative amenable transitivity}. Thus by Lemma \ref{relative amenable and amenable}, we obtain amenability of $L\Gamma$ which is a contradiction.
\end{proof}

\small{

}

\begin{thebibliography}{CKP14}
%
%
\bibitem[BO08]{BO08} N.~P.~Brown and N.~Ozawa, \textit{$C^*$-algebras and finite-dimensional approximations}. Graduate Studies in Mathematics, 88. American Mathematical Society, Providence, RI, 2008.
%
%
\bibitem[CKP14]{CKP14}  I. Chifan, Y. Kida and S. Pant, \textit{Primeness results for von Neumann algebras associated with surface braid groups.} Int. Math. Res. Not. (2015), article id:rnv271, 42pp.
%
\bibitem[CSU11]{CSU11} I. Chifan, T. Sinclair and B. Udrea, \textit{On the structural theory of $\rm II_1$ factors of negatively curved groups, $\rm II$. Actions by product groups.} Adv. Math. {\bf 245} (2013), 208--236.
%
%
\bibitem[Co75]{Co75} A.~Connes, \textit{Classification of injective factors. Cases $\rm II_1$, $\rm II_\infty$, $\rm III_\lambda$, $\lambda\neq 1$.} Ann.\ of Math.\  (2) {\bf 104} (1976), 73--115.
%
\bibitem[Co80]{Co80} A.~Connes, \textit{A factor of type $\rm II_1$ with countable fundamental group}. J. Operator Theory {\bf 4} (1980), 151--153.
%
\bibitem[De10]{De10} S.~Deprez, \textit{Explicit examples of equivalence relations and factors with prescribed fundamental group and outer automorphism group}. Preprint, \texttt{arXiv:1010.3612}.
%
%
\bibitem[DR99]{DR99} K. J. Dykema and F. R$\rm\breve{a}$dulescu, \textit{Compressions of free products of von Neumann algebras}. Math. Ann., {\bf 316} (2000), 61--82.
%
\bibitem[Ga99]{Ga99} D. Gaboriau, \textit{Co$\it \hat{u}$t des relations d'\'{e}quivalence et des groupes.} Invent. Math. {\bf 139} (2000), no. 1, 41--98.
%
\bibitem[Ga01]{Ga01} D. Gaboriau, \textit{Invariants $\ell^2$ de relations d'\'{e}quivalence et de groupes}. Publ. Math. Inst. Hautes \'{E}tudes Sci. No. {\bf 95} (2002), 93--150.
%
\bibitem[Ho15]{Ho15}  D.J. Hoff, \textit{Von Neumann algebras of equivalence relations with nontrivial one-cohomology.} J. Funct. Anal. {\bf 270} (2016), no. 4, 1501--1536.
%
\bibitem[Ho07]{Ho07} C.~Houdayer, \textit{Construction of type $\rm II_1$ factors with prescribed countable fundamental group}. J. Reine Angew Math. {\bf 634} (2009), 169--207.
%
%
%
\bibitem[HI15]{HI15} C.~Houdayer and Y~.Isono, \textit{Unique prime factorization and bicentralizer problem for a class of type $\rm III$ factors}. Adv. Math. {\bf 305} (2017), 402--455.
%
\bibitem[HU15]{HU15} C. Houdayer and Y. Ueda, \textit{Rigidity of free product von Neumann algebras.} Compos. Math. {\bf 152} (2016), 2461--2492.
%
\bibitem[Io12]{Io12} A.~Ioana, \textit{Cartan subalgebras of amalgamated free product $\rm II_1$ factors} (with an appendix joint with S.~Vaes). Ann.\ Sci.\ \'{E}cole Norm.\ Sup. {\bf 48} (2015), 71--130.
%
\bibitem[IPP05]{IPP05} A.~Ioana, J.~Peterson and S.~Popa, \textit{Amalgamated free products of weakly rigid factors and calculation of their symmetry groups}. Acta Math.\ {\bf 200} (2008), 85--153.
%
\bibitem[Is12]{Is12} Y.~Isono, \textit{Weak Exactness for C$^*$-algebras and Application to Condition (AO)}, J.\ Funct.\ Anal.\ {\bf 264} (2013), 964--998.
%
%
%
\bibitem[Is14]{Is14} Y.~Isono, \textit{Some prime factorization results for free quantum group factors}. J. Reine Angew. Math. {\bf 722} (2017), 215--250.
%
\bibitem[MV43]{MV43} F.J.~Murray and J.~Von Neumann, \textit{On rings of operators} IV. Ann.\ Math.\ {\bf 44} (1943), 716--808.
%
\bibitem[Oz04]{Oz04} N.~Ozawa, \textit{A Kurosh type theorem for type $\rm II_1$ factors}. Int.\ Math.\ Res.\ Not.\ (2006), Art. ID 97560, 21 pp.
%
\bibitem[Oz08]{Oz08} N.~Ozawa, \textit{An example of a solid von Neumann algebra.} Hokkaido Math. J., {\bf 38} (2009), 557--561.
%
%
%
\bibitem[OP03]{OP03} N.~Ozawa and S.~Popa, \textit{Some prime factorization results for type $\rm II_1$ factors}. Invent.\ Math.\ {\bf 156} (2004), 223--234.  
%
\bibitem[OP07]{OP07} N.~Ozawa and S.~Popa, \textit{On a class of $\rm II_1$ factors with at most one Cartan subalgebra}. Ann.\ of Math.\ (2), {\bf 172} (2010), 713--749.
%
%
\bibitem[Pe06]{Pe06} J.~Peterson, \textit{L$^2$-rigidity in von Neumann algebras}, Invent.\ Math.\ {\bf 175} (2009), 417--433.
%
\bibitem[Po01]{Po01} S.~Popa, \textit{On a class of type $\rm II_1$ factors with Betti numbers invariants}. Ann.\ of Math.\ {\bf 163} (2006), 809--899.
%
\bibitem[Po03]{Po03} S.~Popa, \textit{Strong rigidity of $\rm II_1$ factors arising from malleable actions of w-rigid groups $\rm I$}. Invent.\ Math.\ {\bf 165} (2006), 369--408.
%
\bibitem[Po04]{Po04} S.~Popa, \textit{Strong rigidity of $\rm II_1$ factors arising from malleable actions of w-rigid groups}, II. Invent. Math. {\bf 165} (2006), 409--452.
%
\bibitem[Po06a]{Po06a} S.~Popa, \textit{On the superrigidity of malleable actions with spectral gap}. J. Amer. Math. Soc. {\bf 21} (2008), 981--1000.
%
\bibitem[Po06b]{Po06b} S.~Popa, \textit{On Ozawa's property for free group factors}. Int. Math. Res. Notices. Vol. 2007 : article ID rnm036, 10 pages.
%
\bibitem[PV08a]{PV08a} S.~Popa and S.~Vaes, \textit{Actions of $\F_\infty$ whose $\rm II_1$ factors and orbit equivalence relations have prescribed fundamental group}. J. Amer. Math. Soc. {\bf 23} (2010), 383--403.
%
\bibitem[PV08b]{PV08b} S.~Popa and S.~Vaes, \textit{On the fundamental group of $\rm II_1$ factors and equivalence relations arising from group actions}. In Quanta of Maths, Proceedings of the Conference in honor of A. Connes' 60th birthday, Clay Mathematics Institute Proceedings, {\bf 11} (2011), pp. 519--541.
%
%
\bibitem[PV12]{PV12} S.~Popa and S.~Vaes, \textit{Unique Cartan decomposition for $\rm II_1$ factors arising from arbitrary actions of hyperbolic groups}. J.\ Reine Angew.\ Math. {\bf 694} (2014), 215--239.
%
\bibitem[Ra91]{Ra91} F.~R$\rm\breve{a}$dulescu, \textit{The fundamental group of the von Neumann algebra of a free group with infinitely many generators is $\R_+^*$}. J. Amer. Math. Soc. {\bf 5} (1992), 517--532.
%
\bibitem[Sa09]{Sa09} H.~Sako, \textit{Measure equivalence rigidity and bi-exactness of groups}. J.\ Funct.\ Anal.\ {\bf 257} (2009) 3167--3202.
%
\bibitem[SW11]{SW11} J.~O.~Sizemore and A.~Winchester, \textit{A unique prime decomposition result for wreath product factors}. Pacific J.\ Math.\ {\bf 265} (2013), no. 1, 221--232.
%
%
%
%
%
%
\bibitem[Va08]{Va08} S.~Vaes, \textit{Explicit computations of all finite index bimodules for a family of $\rm II_1$ factors}. Ann.\ Sci.\ \'{E}cole Norm.\ Sup.\ {\bf 41} (2008), 743--788.
%
%
\bibitem[Va13]{Va13} S.~Vaes, \textit{Normalizers inside amalgamated free product von Neumann algebras}. Publ.\ Res.\ Inst.\ Math.\ Sci.\ {\bf 50} (2014), 695--721.
%
\bibitem[Vo89]{Vo89} D.V.~Voiculescu, \textit{Circular and semicircular systems and free product factors}. In Operator algebras, unitary representations, enveloping algebras, and invariant theory (Paris, 1989), Progr. Math. {\bf 92}, Birkh$\rm \ddot{a}$user, Boston, 1990, p. 45--60.
%
\end{thebibliography}
\end{document}